\newtheorem{theorem}{Theorem}[section]
\newtheorem{lemma}[theorem]{Lemma} 
\newtheorem{remark}[theorem]{Remark} 
\newtheorem{definition}[theorem]{Definition} 
\newtheorem{corollary}[theorem]{Corollary} 
\newtheorem{claim}[theorem]{Claim} 
\newtheorem{example}[theorem]{Example}
\def\A{\mathcal{A}}
\def\B{\mathcal{B}}
\def\C{\mathcal{C}}
\def\D{\mathcal{D}}
\def\K{\mathcal{K}}
\def\a{\alpha}
\def\b{\beta}
\def\g{\gamma}
\def\o{\omega}
\def\k{\kappa}
\def\l{\lambda}
\def\t{t_{qf}}
\def\raj{\upharpoonright}
\def\v{\vert}
\title{Categoricity and universal classes}
\author{Tapani Hyttinen and Kaisa Kangas}
\begin{document}

\thanks{Research of the second author was supported by grant 310737 of the Academy of Finland}
\maketitle

\begin{center}
Department of Mathematics, University of Helsinki \\
P.O. Box 68, 00014, Finland \\
kaisa.kangas@helsinki.fi
\vspace{5mm}
\end{center}

\begin{abstract}
Let $(\K ,\subseteq )$ be a universal class with $LS(\K)=\l$
categorical in regular $\k >\l^{+}$ with arbitrarily large models, and let
$\K^{*}$ be the class of all $\A\in\K_{>\l}$ for which
there is $\B\in\K_{\ge\k}$ such that $\A\subseteq\B$.
We prove that $\K^{*}$ is totally categorical (i.e. $\xi$-categorical for all $\xi>LS(\K)$),
$\K_{\ge\beth_{(2^{\l^{+}})^{+}}}\subseteq\K^{*}$,
and the models of $\K^{*}_{>\l^{+}}$ are essentially vector spaces
(or trivial i.e. disintegrated).
\end{abstract}

\section{Introduction}

\noindent
Universal classes were introduced by S. Shelah in the 80's:
An abstract elementary class $(\K ,\preceq )$
is universal if $\preceq$ is the submodel relation $\subseteq$
and for all $\A\subseteq\B\in\K$, $\A\in\K$. 
In \cite{va}, S. Vasey started the study of categoricity transfer in universal classes. 
He showed that if $\K$ is a universal class and for all $\xi$ there
is $\xi'$ such that $cf(\xi')>\xi$ and $\K$ is categorical in $\xi'$,
then there is $\xi$ such that $\K$ is categorical in every $\xi'>\xi$.

This summer, while visiting Helsinki, Jonathan Kirby asked us whether a version of the following statement is true:

\vspace{0.2cm}

\noindent
{\it Suppose $\K$ is a universal class
with $LS(\K )=\l$ and categorical in some $\k >\l$.
Then $\K$ is categorical in every $\xi >\l$ and the models of $\K$ are either vector spaces or trivial (i.e. disintegrated).}

\vspace{0.2cm}

\noindent
The interesting - and remarkable - part of the statement is that the models would be either vector spaces or trivial,
which means  that the reason behind the categoricity of the class is in the realm of classical mathematics. 
Moreover, in some classical settings it can be quite useful to know that a geometry is trivial since it can e.g. rule out algebraic relations between objects.
Thus, trivial in our context is far from uninteresting.

 
However, the statement cannot be proved without something like the joint embedding property (JEP),
as the following counterexample demonstrates.
 
\begin{example}
Let $\o\le\a <\l^{+}$ and let the language
$L$ consist of constants $c_{i}$, $i<\l$, unary relation symbols
$P_{i}$, $i<\a$, a binary relation symbol $R$ and a binary function symbol
$F$. Let $\K_{0}$ be any universal class
of $L$-structures categorical in every cardinality
$\xi >\l=LS(\K_{0} )$ such that $c_{0}=c_{1}$
is true in every model of $\K_{0}$.
Let $\K_{1}$ consist of those  $L$-structures $\A$
such that

\begin{enumerate}[(i)]
\item $\neg c_{i}=c_{j}$ holds for every  $i<j<\l$;
\item $P_{0}$ is the set of all interpretations of the constants $c_{i}$;
\item the sets $P_{i}$, $i<\a$, form a partition of the universe of
$\A$;
\item if $R(a,b)$ holds and $b\in P_{i}$, then $a\in\cup_{j<i}P_{j}$;
\item if there is $0<i<\a$ such that $a,b\in P_{i}$ and $a\ne b$,
then $R(F(a,b),a)\leftrightarrow\neg R(F(a,b),b)$ holds in $\A$.
\end{enumerate}

\noindent
In other words, 
when $i>0$, we attach distinct subsets of $\bigcup_{j<i} P_j$
to distinct elements of $P_i$.
The relation $R$ describes these subsets, 
and $F$ is essentially a Skolem function witnessing that two distinct elements are connected to different sets.
It is easy to see that $(\K_{1},\subseteq )$ is a universal
class with $LS(\K_{1})=\l$ and 
$\K_{1}\cap \K_{0}=\emptyset$.
Moreover,  $\K_{1}$ has a model of power $\xi$ if and only if $\l\le\xi\le\beth_{\a}(\l )$,
where $\beth_{0}(\l )=\l$, $\beth_{\g +1}(\l )=2^{\beth_{\a}(\l )}$
and for limit $\g$, $\beth_{\g}(\l )=\cup_{\delta <\g}\beth_{\delta}(\l )$.
Indeed, the set $P_0$ always has size $\l$,
the set $P_1$ has size at most $2^\l$, and so on.
Now $\K=\K_{0}\cup\K_{1}$ is a universal class
with $LS(\K)=\l$ and
it is categorical in $\xi$ if and only if $\xi >\beth_{\a}(\l )$. 
Obviously it contains models that can not be seen as vector spaces.
\end{example}

However, this example is artificial in the sense that the class $\K$
consists of the class $\K_0$
together with some added noise in the form of the models in $\K_1$.
We have $\K_{>\beth_{\a}(\l )} \subseteq \K_0$,
and above the cardinality $\beth_{\a}(\l )$
the class $\K$ is categorical because $\K_0$ is. 
Even when we tried harder, we didn't manage to find a counterexample that would not be essentially the same as the one above.
This led us to think that maybe Kirby was right,
and we ended up proving that the statement basically holds after removing some relatively small models (noise) from the class.
 
We assume that the class $\K$ satisfies Kirby's assumptions:
$\K$ is a universal class, $LS(\K )=\l$, $\K$ has arbitrarily large models, and $\K$ is categorical in some $\k >\l$.
Moreover, we assume that $\k$ is a regular cardinal such that $\k>\l^+$.
We define $\K^*$ to be the class obtained by removing the noise from $\K$.
More precisely, we take $\K^*$ to consist of all models $\A\in\K_{>\l}$
such that there is some $\B\in\K_{\ge\k}$
for which $\A \subseteq \B$.
We prove that $\K^*$ is totally categorical (Theorem \ref{catbonus}).
Moreover,  $\K_{\ge\beth_{(2^{\l^{+}})^{+}}}\subseteq\K^{*}$ 
(Theorem \ref{jatko}) and the models in $\K^{*}_{>\l^{+}}$ are either essentially vector spaces or trivial (Theorems \ref{quasiurb} and \ref{koordinaatit}).
On the way, we also show that $\K^*$ is an AEC with $LS(\K^*)=\l^+$ (Lemma \ref{aec}).

The gist of our argument is related to geometric stability theory:
we will find a model $\A \in \K_{\l^+}^*$
and a quantifier free type $p \in S_{qf}(\A)$ that is minimal (see Definition \ref{minimal})
and show that if $\B \in \K^*$ contains $\A$,
then the set $X$ of realisations of $p$ in $\B$ has a natural pregeometry.
We will then prove categoricity in $\mu>\l^+$ by showing that 
$\B=\langle \A, a_i \rangle_{i<\a}$, where $(a_i)_{i<\a}$
is a maximal independent sequence of realisations of $p$ (Lemma \ref{generated} and Theorem \ref{categoricity}).
Finally, we will use a result of Zilber's to show that either the pregeometry on $X$ is trivial or $X$  
can be given the structure of a vector space (Theorem \ref{quasiurb}),
and then go on to prove that there is a strong coordinatisation of $\B$
using the elements of $X$ (Theorem \ref{koordinaatit}). 
Together, these two theorems show that the models in $\K^{*}_{>\l^{+}}$ are essentially vector spaces or trivial.
Applying the coordinatisation, 
we show that $\K^*$ is in fact categorical also in $\l^+$ and thus totally categorical.
To sum up, we obtain our results by using geometric stability theory in a non-elementary context. 

In section \ref{cat}, we will prove that the class $\K^*$ is categorical in $\mu>\l^+$,
and on the way, we will show that $\K^*$ is an AEC with $LS(\K^*)=\l^+$.
Much of our proofs rely on the properties of quantifier free indiscernible sequences,
and we need the assumption about arbitrarily large models to build such sequences with Ehrenfeucht-Mostowski constructions. 
As soon as we have constructed a quantifier free indiscernible sequence of size $\k$,
the assumption of $\k$-categoricity can be reformulated by stating that all models of cardinality $\k$
are generated by a quantifier free indiscernible sequence of cardinality $\k$.
Then, we will also have a version of stability (Lemma \ref{stab}),
and using the properties of indiscernibles we can define the average type of an indiscernible sequence (see Definition \ref{avdef}).
We will then use average types to prove the amalgamation property (AP) for the class $\K_{<\k}^*$ (Lemma \ref{AP}).
Given stability, 
we can now use amalgamation to do the usual tree construction to show that there is a minimal type $p$ over some model $\A \in \K_{\l^+}$ (see Definition \ref{minimal} and Lemma \ref{uniqext}). 
In section \ref{vector}, we prove the main result: the structures in $\K^*$ are either trivial or essentially vector spaces
(Theorems \ref{quasiurb} and \ref{koordinaatit}),
and apply it to show that $\K^*$ is totally categorical (Theorem \ref{catbonus}).
On the way to proving these theorems, we show that $\K^*$
has the amalgamation property (AP) and that models in $\K^*$ are saturated (Lemma \ref{apsat}).

 
When using standard arguments from stability theory, 
we omit the details and refer the reader to e.g. \cite{ba} for them.
Most of the stability theoretic techniques we use, like average types,
are originally from \cite{sh}.
The use of geometry in model theory was initiated in \cite{bl},
and the idea behind our proof of upward categoricity transfer comes from this paper.
 
\section{Categoricity}\label{cat}

Let $(\K, \subseteq)$ be a universal class of $L$-structures with arbitrarily large models, $LS(\K)=\lambda= \vert L\vert +\o$, suppose $\k$ is regular, $\k>\l^{+}$,  and $\K$ is $\k$-categorical. 

Denote by $\K^*$ the class consisting of all the models $\A \in \K$ such that $\vert \A \vert > \l$ 
and there is some $\B \in \K_{\ge \k}$ such that $\A \subseteq \B$.
 
If $A$ is a set, we will follow the usual convention in model theory and use the notation $a \in A$
to denote that $a$ is a finite tuple of elements from $A$.
If $a$ is assumed to be a singleton, it will be specifically mentioned.
For a set $A$, we use the notation $\langle A \rangle$ for the model generated by $A$.
 
If $\A \in \K^*$ (or $\A \in \K$) 
and $A \subseteq \A$,
we denote by $\t(a/A)$ the quantifier free $L$-type of $a$ over $A$,
and whenever we talk about types, we mean quantifier free types.
We say that a type $p$ over a set $A$ is \emph{consistent} in the class $\K^*$ (or the class $\K$)
if there is some 
$\B \in \K^*$ (or $\B \in \K$) such that $\langle A \rangle \subseteq \B$ and some $a \in \B$ such that $a$ realises $p$.

\begin{lemma}\label{EM}
There are $\A \in \K_\k$ and $(a_i)_{i<\k} \subseteq \A$ quantifier free order indiscernible over $\emptyset$.
\end{lemma}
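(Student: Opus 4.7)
The plan is a standard Ehrenfeucht--Mostowski construction, adapted to quantifier free types because $\K$ is a universal class.

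First, I would use the hypothesis that $\K$ has arbitrarily large models to pick some $\B \in \K$ containing a set of pairwise distinct elements whose cardinality is large enough for Erd\H{o}s--Rado to deliver monochromatic subsets of size $\k$ at every finite arity---any cardinal dominating the relevant $\beth$-tower over $\k + 2^{\l}$ suffices, since the number of quantifier free $L$-types in $n$ variables over $\emptyset$ is bounded by $2^{\l}$ for each $n<\o$.

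Next, I would apply the Erd\H{o}s--Rado partition theorem in nested fashion: at step $n$ I would colour the strictly increasing $n$-tuples of the current long sequence by their quantifier free type over $\emptyset$ and extract a monochromatic subsequence. Iterating over $n<\o$ and keeping a subsequence of length $\k$ at every stage, the usual diagonalisation produces a sequence $(b_i)_{i<\k} \subseteq \B$ of distinct elements such that any two strictly increasing $n$-tuples from it realise the same quantifier free type over $\emptyset$, for every $n$. This is precisely quantifier free order indiscernibility over $\emptyset$.

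To conclude, I would set $\A := \langle b_i : i<\k \rangle \subseteq \B$. Since $\K$ is universal, $\A \in \K$; as the $b_i$ are distinct, $|\A| \geq \k$, and $|\A| \leq \k + |L| = \k$, so $\A \in \K_{\k}$. Quantifier free types are absolute between substructures in the same language, so $(b_i)_{i<\k}$ remains quantifier free order indiscernible in $\A$, and setting $a_i := b_i$ finishes the proof. The only real subtlety is the Erd\H{o}s--Rado bookkeeping needed to secure indiscernibility for all finite arities simultaneously; this diagonalisation is standard, and the reason the assumption of arbitrarily large models enters is precisely to supply a starting $\B$ whose size dominates the resulting $\beth$-tower.
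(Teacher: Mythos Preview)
Your overall strategy --- use Erd\H{o}s--Rado to obtain quantifier-free indiscernibles, then pass to the structure they generate --- is the same as the paper's. There is, however, a real gap in the extraction step. You assert that iterating Erd\H{o}s--Rado over all arities and then performing ``the usual diagonalisation'' produces an indiscernible subsequence $(b_i)_{i<\k}\subseteq\B$ of length $\k$. For uncountable $\k$ this is not available in ZFC: the relation $\mu\to(\k)^{<\o}_{2}$ (for some $\mu$) already asserts the existence of a $\k$-Erd\H{o}s cardinal. Concretely, each application of Erd\H{o}s--Rado at arity $n+1$ collapses a set of size $\beth_{n}(\nu)^{+}$ to one of size $\nu^{+}$, so one cannot arrange the stages to remain large enough to feed the next step while still having size $\k$ after infinitely many steps; the standard diagonalisation across $n<\o$ therefore yields only a \emph{countable} indiscernible subsequence --- equivalently, an EM blueprint $\Phi$.

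The missing step --- and this is exactly what the paper's proof means by ``the Ehrenfeucht--Mostowski construction'' --- is to take that blueprint $\Phi$ and \emph{build} a new structure $\A$ generated by a sequence of order type $\k$ realising it; the indiscernibles then live in $\A$, not in $\B$. Here the universal-class hypothesis replaces Skolemisation: every finitely generated piece of $\A$ is isomorphic to a substructure of $\B\in\K$, hence lies in $\K$, and since $\K$ is closed under unions of directed systems, $\A\in\K$. With this correction your size check $\vert\A\vert=\k$ goes through unchanged, and the starting model $\B$ need only have size $\beth_{(2^{\l})^{+}}$, independent of $\k$, which is the bound the paper quotes.
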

 
\begin{proof}
Starting from a model of size at least $\beth_{(2^\l)^+}$, we can do the Ehrenfeucht-Mostowski construction for AECs originally due to Shelah. Since we are working in a universal class, 
we don not need to add Skolem functions, merely apply the Erd\"os-Rado theorem. 
\end{proof}

\begin{lemma}\label{indgen}
Let $\A \in \K_\k$.
Then, there is a quantifier free order indiscernible sequence $(a_i)_{i<\k} \subseteq \A$ such that $\A= \langle a_i \rangle_{i<\k}$. 
\end{lemma}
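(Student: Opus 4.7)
The plan is straightforward: produce a specific model of size $\k$ that is generated by its indiscernibles, and then transfer the indiscernible sequence to $\A$ via the isomorphism supplied by $\k$-categoricity.

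First I apply Lemma~\ref{EM} to obtain some $\A' \in \K_\k$ together with a quantifier free order indiscernible sequence $(b_i)_{i<\k}$ in $\A'$. By the usual Erd\H{o}s--Rado argument the $b_i$ may be taken pairwise distinct (otherwise indiscernibility would collapse the sequence to a constant one, contradicting $|\A'| = \k$). Let $\A'_0 := \langle b_i \rangle_{i<\k}$ be the substructure of $\A'$ generated by the sequence. Since $(\K,\subseteq)$ is universal, $\A'_0 \in \K$. Its cardinality is exactly $\k$: it contains $\k$ distinct elements, and since $|L| \le \l < \k$ with $\k$ regular, the number of $L$-terms evaluated on the $b_i$ is bounded by $\k \cdot \l + \o = \k$. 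Moreover, quantifier free formulas are absolute between $\A'_0$ and $\A'$, so $(b_i)_{i<\k}$ remains quantifier free order indiscernible in $\A'_0$.

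Since $\A$ and $\A'_0$ both lie in $\K_\k$, $\k$-categoricity of $\K$ yields an isomorphism $f \colon \A'_0 \to \A$. Setting $a_i := f(b_i)$, the sequence $(a_i)_{i<\k}$ is quantifier free order indiscernible in $\A$, and because $f$ is an isomorphism it carries the generated substructure $\A'_0$ onto $\A$; hence $\A = \langle a_i \rangle_{i<\k}$. There is no real obstacle here beyond the bookkeeping just described: the content of the lemma is essentially that $\k$-categoricity transports the indiscernible sequence produced by Lemma~\ref{EM} into any given model in $\K_\k$, provided we first replace the EM-model by the (still $\k$-sized) substructure generated by its indiscernibles.
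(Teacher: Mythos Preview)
Your argument is correct and is exactly the intended expansion of the paper's one-line proof (``Follows from $\k$-categoricity and Lemma~\ref{EM}''): pass to the substructure generated by the EM indiscernibles, observe it still has size $\k$, and transport via the categoricity isomorphism. The only minor slip is the parenthetical justification for distinctness---a constant sequence would not contradict $|\A'|=\k$ but rather make $|\A'_0|\le\l$; in any case the Erd\H{o}s--Rado extraction is applied to a sequence of distinct elements, so the $b_i$ are distinct from the outset.
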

\begin{proof}
Follows from $\k$-categoricity and Lemma \ref{EM}.
\end{proof}
  
\begin{proof}
By Lemma \ref{indgen} any model of size $\k$ is generated by a quantifier free order indiscernible sequence.
There clearly is a submodel that is as wanted. 
\end{proof}

\begin{lemma}\label{stab}
If $A \subseteq \A \in \K_\k$ and $\vert A \vert < \k$, then $\vert \{\t(a/A) \, | \, a \in \A\} \vert \le \vert A \vert+ \l$.
\end{lemma}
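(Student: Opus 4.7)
The plan is to exploit Lemma~\ref{indgen}, which provides a quantifier free order indiscernible enumeration $(a_i)_{i<\k}$ with $\A = \langle a_i \rangle_{i<\k}$. Every tuple $a\in\A$ may then be expressed as $\bar\tau(a_{i_1},\ldots,a_{i_n})$ for some finite sequence of $L$-terms $\bar\tau$ applied to an increasing index tuple $i_1 < \cdots < i_n$ from $\k$; the number of available $\bar\tau$ is bounded by $|L| + \o = \l$. Let $J \subseteq \k$ be the set of all indices $j$ such that $a_j$ appears in some term representation of some element of $A$. Since each element of $A$ uses only finitely many indices and $|A| < \k$, we get $|J| \le |A| + \o \le |A| + \l$.

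The heart of the argument is the following claim: for fixed $\bar\tau$, the type $\t(\bar\tau(a_{i_1},\ldots,a_{i_n})/A)$ is determined by the \emph{interleaving pattern} of $(i_1,\ldots,i_n)$ with $J$, meaning the specification of which $i_l$ coincide with specific elements of $J$ and, for the others, into which of the $|J|+1$ open intervals of $J$ they fall. To verify this, fix any quantifier free formula $\varphi(\bar x, \bar y)$ and any parameter tuple $(b_1,\ldots,b_m)$ from $A$, and write each $b_k$ as $\sigma_k(a_{\bar j_k})$ with $\bar j_k \subseteq J$ finite. Then $\A \models \varphi(\bar\tau(a_{i_1},\ldots,a_{i_n}),b_1,\ldots,b_m)$ is equivalent to $\A \models \psi(a_{i_1},\ldots,a_{i_n},(a_j)_{j \in \bigcup_k \bar j_k})$, where $\psi$ is the quantifier free formula obtained by substituting $\bar\tau$ and the $\sigma_k$ into $\varphi$. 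By quantifier free order indiscernibility over $\emptyset$, this satisfaction depends only on the order type of the concatenated index sequence, which in turn is pinned down by the interleaving pattern of $(i_1,\ldots,i_n)$ with $J$.

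Finally, for each arity $n$ the number of interleaving patterns of an increasing $n$-tuple with $J$ is at most $(2|J|+1)^n$; summing over $n<\o$ yields at most $|J|+\o$ patterns in total, and multiplying by the at most $\l$ choices of $\bar\tau$ gives $\l \cdot (|J|+\o) = |A| + \l$, as desired. The most delicate step is the pattern-invariance claim: one is reducing quantifier free types over the possibly infinite set $A$ to a combinatorial pattern with $J$. This succeeds precisely because quantifier free formulas mention only finitely many parameters at a time, so pattern-equivalence over every finite subset of $A$ — which is exactly what quantifier free indiscernibility over $\emptyset$ delivers — is sufficient to force equality of the full type over $A$.
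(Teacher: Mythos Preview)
Your proof is correct and follows essentially the same approach as the paper: both use Lemma~\ref{indgen} to write $\A=\langle a_i\rangle_{i<\k}$, pick a small index set $J$ (the paper calls it $X$) supporting $A$, and observe that by order indiscernibility the type $\t(a/A)$ of an element $a=\bar\tau(a_{i_1},\ldots,a_{i_n})$ depends only on the term $\bar\tau$ and the position of $(i_1,\ldots,i_n)$ relative to $J$. Your write-up is more explicit about the ``interleaving pattern'' combinatorics that underlies the paper's one-line remark ``since $(X,<)$ is a well-ordering, there are at most $|X|$ many such types,'' but the argument is the same.
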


\begin{proof}
By Lemma \ref{indgen}, $\A$ is generated by a quantifier free order indiscernible sequence
$(a_i)_{i<\k}$. 
Now there is some $X \subset \k$ such that $\v X \v \le \v A \v +\o$ and $A \subseteq \langle a_i \rangle_{i \in X}$.
For every $a \in \A$, there is a number $n \in \o$, a $L$-term $t$ and indices $i_1, \ldots, i_n \in \k$ such that
$a=t(a_{i_1}, \ldots, a_{i_n})$.
The type $\t(a/A)$ only depends on the term $t$ and the type $\t(a_{i_1}, \ldots, a_{i_n}/ \langle a_i \rangle_{i \in X})$, and since $(X, <)$ is a well-ordering, there are at most $\v X \v$ many such types.
Since there are $\lambda$ many $L$-terms and $\vert X \vert \le \vert A \vert+\o$, the claim follows.
\end{proof}
 
\begin{lemma}\label{ordind}
If $(a_i)_{i<\a} \subseteq \A \in \K^*$ is an infinite quantifier free order indiscernible sequence over some $A \subseteq \A$ such that $\vert A \vert < \k$, 
then it is quantifier free indiscernible over $A$.
\end{lemma}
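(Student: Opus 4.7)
Suppose, toward a contradiction, that $(a_i)_{i<\a}$ is quantifier free order indiscernible over $A$ but not quantifier free indiscernible. Then there exist $n\ge 2$, a quantifier free $L$-formula $\phi(x_1,\dots,x_n,\bar y)$ and a finite tuple $\bar b\in A$ together with two $n$-tuples of distinct indices from $\a$ with different order types on which $\phi(\cdot,\bar b)$ takes opposite truth values.

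We stretch the sequence by an Ehrenfeucht--Mostowski construction. Expanding $L$ by constants for the entries of $\bar b$ keeps us in a universal class with arbitrarily large models, so the argument of Lemma \ref{EM} produces an EM-template realising the EM-type of $(a_i)_{i<\a}$ over $\bar b$. Feed this template a dense linear order $I$ of size $\k$ homogeneous enough that every non-trivial open sub-interval of $I$ is again dense of size $\k$ (for instance, a $\k$-saturated dense linear order without endpoints). The resulting model $\D$ lies in $\K_\k\subseteq\K^*$ and carries an order indiscernible skeleton $(b_\eta)_{\eta\in I}$ over $\bar b$ with the same EM-type over $\bar b$ as $(a_i)_{i<\a}$.

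Because the EM-type is preserved, the failure of indiscernibility witnessed by $\phi$ and $\bar b$ persists in $(b_\eta)_{\eta\in I}$. Writing the bad permutation as a product of adjacent transpositions, some single transposition, say at positions $k$ and $k+1$, already flips $\phi$; freezing the remaining $n-2$ entries yields a finite parameter tuple $\bar c$ (namely $\bar b$ together with those $n-2$ elements of $\D$) and a quantifier free formula $\psi(y_1,y_2,\bar c)$ such that, after possibly negating $\psi$, $\psi(b_\eta,b_{\eta'},\bar c)\Leftrightarrow\eta<\eta'$ for $\eta,\eta'$ ranging in an open sub-interval $I_0\subseteq I$ determined by the frozen entries. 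By homogeneity of $I$, $I_0$ is dense of size $\k$; pick a dense $D_0\subseteq I_0$ of size $\l$ and set $B=\bar c\cup\{b_\delta:\delta\in D_0\}$, so $\v B\v \le\l<\k$. For each $\eta\in I_0$, the type $\t(b_\eta/B)$ records the $\psi$-cut $\{\delta\in D_0:\eta<\delta\}$, and density of $D_0$ in $I_0$ forces distinct $\eta$'s to yield distinct cuts. Hence $\D\in\K_\k$ realises at least $\v I_0\v =\k$ quantifier free types over $B$, contradicting Lemma \ref{stab}, whose bound is $\v B\v +\l=\l<\k$.

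The delicate step is the stretch: we must verify that the Ehrenfeucht--Mostowski construction of Lemma \ref{EM}, carried out in the expansion of $L$ by constants for $\bar b$, genuinely produces a model in $\K_\k$ whose skeleton over $\bar b$ is indexed by any prescribed linear order and realises the prescribed EM-type. Since the expanded class is again a universal class with arbitrarily large models, this is a routine variant of Lemma \ref{EM}, and it is the only point in the argument where the structural hypotheses on $\K$ are used.
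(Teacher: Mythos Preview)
Your approach is exactly the paper's: stretch the order-indiscernible sequence by an Ehrenfeucht--Mostowski construction over the finitely many parameters, extract a binary quantifier-free formula $\psi$ that recovers the order on an interval, and then use cuts to produce too many quantifier-free types over a small set, contradicting Lemma~\ref{stab}. The paper simply cites ``the usual argument'' for this last step, so your write-up is in fact more detailed than the paper's own proof.

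There is, however, one step that fails as written. You require that every open interval of $I$ have size $\k$ (your example is a $\k$-saturated DLO), and then you ``pick a dense $D_0\subseteq I_0$ of size $\l$''. These two conditions are incompatible whenever $\k>2^{\l}$: any linear order with a dense subset of size $\l$ has at most $2^{\l}$ cuts and hence at most $2^{\l}$ elements, so $|I_0|\le 2^{\l}<\k$. In particular, a $\k$-saturated DLO never has a dense subset of size $<\k$ (for any $D_0$ of size $<\k$ and any $d\in D_0$, saturation realises the type saying ``$x>d$ and $x<d'$ for every $d'\in D_0$ with $d'>d$'', producing a gap).

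The repair is painless and does not change your strategy: you do not need $\k$ many types, only more than $\l$. Choose $I$ of total size $\k$ (so that the EM model lands in $\K_\k$ and Lemma~\ref{stab} applies) but arrange that the interval $I_0$ between the frozen coordinates contains a dense linear order $D_0$ of size $\l$ together with $\l^{+}$ many points realising pairwise distinct cuts of $D_0$; such a $D_0$ exists (e.g.\ the eventually-zero sequences in $2^{\l}$ under the lexicographic order have $2^{\l}>\l$ cuts), and you are free to pad the rest of $I$ out to size $\k$. Then over $\bar c\cup\{b_\delta:\delta\in D_0\}$, a set of size $\l$, you obtain $\l^{+}>\l$ quantifier-free types, which is the desired contradiction with Lemma~\ref{stab}.
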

 
\begin{proof}
With an Ehrenfeucht-Mostowski construction we can obtain a model that contains an order indiscernible sequence of any order type whose finite subsequences have the same quantifier free type as those of the sequence $(a_i)_{i<\a}$.
Then, the usual argument (see e.g. \cite{Marker}, Theorem 5.2.13 for the first order case) yields a contradiction with Lemma \ref{stab}. 
\end{proof}

\begin{lemma}\label{indgen2}
Let $\A \in \K_\k$.
Then, there is a quantifier free indiscernible sequence $(a_i)_{i<\k} \subseteq \A$ such that $\A= \langle a_i \rangle_{i<\k}$. 
\end{lemma}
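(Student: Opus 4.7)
The plan is to combine the two preceding lemmas in the obvious way. Lemma \ref{indgen} gives us, for any $\A \in \K_\k$, a quantifier free order indiscernible sequence $(a_i)_{i<\k} \subseteq \A$ that generates $\A$; the only thing missing is the upgrade from order indiscernibility to full indiscernibility, which is exactly what Lemma \ref{ordind} provides.

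More concretely, first I would invoke Lemma \ref{indgen} to obtain a quantifier free order indiscernible sequence $(a_i)_{i<\k} \subseteq \A$ with $\A = \langle a_i \rangle_{i<\k}$. To apply Lemma \ref{ordind}, I need to check its hypotheses. The sequence is infinite (since $\k > \l^+ \ge \o$), and I want to apply the lemma over $A = \emptyset$, which trivially satisfies $|A| = 0 < \k$. The only subtlety is that Lemma \ref{ordind} is stated for sequences inside a model of $\K^*$, so I need to verify $\A \in \K^*$. But $\A \in \K_\k$ means $|\A| = \k > \l^+ > \l$, and taking $\B = \A \in \K_{\ge\k}$ witnesses that $\A$ belongs to $\K^*$ by the very definition of $\K^*$ given at the start of this section.

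Having verified all hypotheses, Lemma \ref{ordind} with $A = \emptyset$ immediately yields that $(a_i)_{i<\k}$ is quantifier free indiscernible over $\emptyset$, and it still generates $\A$. That finishes the proof.

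There isn't really an obstacle here; the lemma is essentially a bookkeeping corollary of \ref{indgen} plus \ref{ordind}, separated out presumably because later sections will want to cite the stronger form (full indiscernibility of a generating sequence) without reassembling the argument each time. The one thing a careful writer should flag is the membership $\A \in \K^*$, since the reader's eye tends to skim past the fact that Lemma \ref{ordind} is stated inside $\K^*$ rather than $\K$.
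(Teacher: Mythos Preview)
Your proposal is correct and matches the paper's own proof exactly: the paper simply writes ``Follows from Lemmas \ref{indgen} and \ref{ordind}.'' Your additional remark verifying $\A\in\K^*$ (needed for the hypothesis of Lemma \ref{ordind}) is a useful clarification that the paper leaves implicit.
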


\begin{proof}
Follows from Lemmas \ref{indgen} and \ref{ordind}.
\end{proof}

\begin{lemma}\label{delta0}
Suppose $\A  \in \K_{\le \k}^*$, $\mu>\l$ is a regular cardinal, $(a_i)_{i<\mu}$ is a sequence of distinct tuples from $\A$,
and $A \subseteq \A$ is such that
$\v A \v < \mu$.
Then, the sequence $(a_i)_{i<\mu}$ contains a subsequence of length $\mu$ that is quantifier free indiscernible over $A$. \end{lemma}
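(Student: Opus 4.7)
The plan is to extract a quantifier free order indiscernible subsequence of length $\mu$ from $(a_i)_{i<\mu}$ over $A$, and then invoke Lemma \ref{ordind} to upgrade it to a fully quantifier free indiscernible one.

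First I would reduce to the situation where Lemma \ref{stab} applies directly. If $|\A|<\k$, then by the defining property of $\K^*$ there is $\B'\in\K_{\ge\k}$ with $\A\subseteq\B'$; using the universal-class downward L\"owenheim-Skolem property (the submodel generated by any set $X\supseteq\A$ of size $\k$ inside $\B'$ lies in $\K$ and has size $\k$), I enlarge $\A$ inside $\B'$ to some $\B\in\K_\k$ with $\A\subseteq\B$. Otherwise $\A\in\K_\k$ and I just set $\B=\A$. In either case Lemma \ref{stab} applies inside $\B$, and iterated (for each fixed tuple length $n$) yields that the number of quantifier free types of $n$-tuples over $A$ realised in $\B$ is at most $|A|+\l$. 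Since $\mu>\l$ is regular and $|A|<\mu$, we have $|A|+\l<\mu$.

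Second, I would build by transfinite recursion on $\a<\mu$ a strictly increasing sequence of indices $(j_\a)_{\a<\mu}\subseteq\mu$ and, in parallel, a family of reservoirs $I_\a\subseteq\mu$ of size $\mu$ with $j_\a\in I_\a$, such that the resulting subsequence $(a_{j_\a})_{\a<\mu}$ is quantifier free order indiscernible over $A$. At a successor stage $\a+1$ I use the stability bound from the previous paragraph (at most $|A|+|\a|+\l<\mu$ quantifier free types of the relevant tuples over $A\cup\{a_{j_\b}:\b\le\a\}$) to pigeonhole the remaining reservoir down to a size-$\mu$ subset on which the type of the next-to-be-chosen tuple over the already constructed elements is constant and extends the already fixed ``canonical'' order-type pattern. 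Limit stages $\delta<\mu$ are handled by a diagonal argument: from the tail $\mu\setminus\sup_{\b<\delta}j_\b$ one extracts a size-$\mu$ subset that respects all the finitely-many-at-a-time type constraints inherited from stages below $\delta$, which is feasible because the total number of such constraints is bounded by $|A|+|\delta|+\l<\mu$ and $\mu$ is regular.

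Finally, the sequence $(a_{j_\a})_{\a<\mu}\subseteq\A$ is infinite and quantifier free order indiscernible over $A$, and $|A|<\mu\le\k$, so Lemma \ref{ordind} yields that it is quantifier free indiscernible over $A$, completing the proof.

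The main obstacle is the bookkeeping at limit stages of the transfinite construction: intersections of fewer than $\mu$ many subsets of $\mu$ of size $\mu$ need not themselves be of size $\mu$, so the reservoirs must be threaded so as to respect all type constraints inherited from below while still staying large enough to continue. The regularity of $\mu$, combined with the uniform stability bound $|A|+\l<\mu$ (which stays strictly below $\mu$ even after adding fewer than $\mu$ many previously chosen tuples), is exactly what makes the construction survive through all $\mu$ stages.
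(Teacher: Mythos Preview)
Your approach is genuinely different from the paper's, and the limit stage of your recursion has a real gap.

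The paper does not run a transfinite pigeonhole on types. Instead it enlarges $\A$ to some $\B\in\K_\k$, uses Lemma~\ref{indgen2} to write $\B=\langle b_i\rangle_{i<\k}$ with $(b_i)_{i<\k}$ indiscernible, and expresses each $a_j$ as a term $t(b_{i_0^j},\ldots,b_{i_n^j})$ in these generators. Then a single pass of pigeonhole fixes the term $t$, the $\Delta$-lemma on the finite index sets $\{i_0^j,\ldots,i_n^j\}$ produces a common kernel $Y$, and one more pigeonhole pushes all non-kernel indices outside the small set $X\subseteq\k$ supporting $A$. Indiscernibility of the resulting subsequence over $A$ is then read off directly from the indiscernibility of $(b_i)$ over $\emptyset$; no transfinite bookkeeping is needed at all.

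In your construction the limit step does not go through as written. Write $p_\b$ for the type over $A\cup\{a_{j_\gamma}:\gamma\le\b\}$ that you fix at stage $\b+1$. At a limit $\delta$ you need a size-$\mu$ set of tail indices all of whose elements realise the one specific type $\bigcup_{\b<\delta}p_\b$ over $A\cup\{a_{j_\b}:\b<\delta\}$; otherwise the element $a_{j_\delta}$ you pick next will break the order-type pattern already laid down. The stability bound only guarantees that \emph{some} type over this set has $\mu$ many realisers in the tail --- it gives no reason why that type should be $\bigcup_{\b<\delta}p_\b$. Your phrase ``the total number of such constraints is bounded by $|A|+|\delta|+\l<\mu$'' does not resolve this: these are not $<\mu$ independent conditions each of which merely shrinks a size-$\mu$ reservoir once; they jointly demand that one fixed branch of the type tree over $A\cup\{a_{j_\b}:\b<\delta\}$ be thick, and a small branch count does not force any particular branch to be thick. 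Repairing this route typically needs an extra idea (for instance locating for each $a_i$ a non-splitting base of size $\le\l$ inside $A\cup\{a_j:j<i\}$ and then applying Fodor's lemma), not just the raw type count together with regularity of $\mu$.
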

 
\begin{proof}
For simplicity of notation, assume the $a_i$ are singletons.
By the definition of the class $\K^*$, there is some model $\B \in \K_\k^*$ such that 
$\A \subseteq \B$, and by Lemma \ref{indgen2}, the model $\B$ 
is generated by some sequence $(b_i)_{i<\k}$ indiscernible over $\emptyset$.
 
For each $j<\mu$, we can write $a_j=t_j(b_{i_0}, \ldots, b_{i_n})$, where $t_j$ is an $L$-term and $i_0, \ldots, i_n < \k$.
Since $\mu>\lambda$, we may without loss assume (use the pigeonhole principle) that there is an $L$-term $t$ such that $t_j=t$ for all $j<\mu$.
Then, for each $j<\mu$, we can write $a_j=t(b_{i_0}^j, \ldots, b_{i_n}^j)$.

There is some $X \subset \k$ such that $\v X \v < \mu$ and $A \subseteq \langle b_k \rangle_{k \in X}$, 
and since $\mu$ is regular, we can use the $\Delta$-lemma and thus we may assume that there is a finite set $Y$ such that 
$\{i_0^j, \ldots, i_n^j\} \cap \{i_0^k, \ldots, i_n^k\}=Y$ for all $j<k<\mu$.
Using the pigeonhole principle, we may assume that if $i_p^j \in Y$, then $i_p^j=i_p^k$ for all $k<\mu$.
Since $\vert X \vert < \mu$, we may assume (using the pigeonhole principle) that $i_p^j \notin Y$ implies $i_p^j \notin X$ for all $j < \mu$.

Now it is easy to see that we have obtained a sequence of length $\mu$ that is quantifier free indiscernible over $A$.
\end{proof}

\begin{lemma}\label{delta}
Suppose $A \subseteq \A \in \K_{\le \k}^*$ and $\mu$ is a regular cardinal such that $\l<\mu$ and
$\vert A \vert < \mu \le \vert \A \vert$.
Then there is a sequence $(a_i)_{i<\mu}$ quantifier free indiscernible over $A$.
\end{lemma}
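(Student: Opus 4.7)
The plan is to reduce the statement immediately to Lemma \ref{delta0}. All the hypotheses of that lemma are already in our hands ($\A \in \K_{\le \k}^*$, $\mu > \l$ regular, $|A| < \mu$); the only ingredient we still need to produce is an initial sequence of $\mu$ distinct tuples inside $\A$ on which to run Lemma \ref{delta0}.

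Since $|\A| \ge \mu$, the universe of $\A$ contains $\mu$ distinct elements. Enumerate them as $(a_i)_{i<\mu}$, regarded as a sequence of singletons (which are tuples of length one, in the convention fixed just before Lemma \ref{EM}). Applying Lemma \ref{delta0} to this sequence and to the set $A$ produces a subsequence of length $\mu$ that is quantifier free indiscernible over $A$; relabelling the indices gives the desired sequence.

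There is no genuine obstacle: the real work --- the pigeonhole reduction to a single term $t$, the $\Delta$-system argument to trim the indices $i^j_p$ to a common finite core $Y$, and the further pigeonholing to push the indices outside the generating set $X$ for $A$ --- has already been performed inside Lemma \ref{delta0}. The present statement simply replaces the bare hypothesis ``there are $\mu$ distinct tuples'' by the cleaner ``$\mu \le |\A|$'', which is the form in which the lemma will be invoked later.
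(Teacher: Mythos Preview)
Your proposal is correct and matches the paper's proof essentially verbatim: the paper simply says to choose any non-trivial sequence $(a_j)_{j<\mu} \subseteq \A$ and apply Lemma \ref{delta0}, which is exactly what you do (making explicit that $|\A|\ge\mu$ supplies the $\mu$ distinct singletons).
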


\begin{proof}
Choose any non-trivial sequence $(a_j)_{j<\mu} \subseteq \A$ and apply Lemma \ref{delta0}.
\end{proof}

\begin{definition}\label{satdef}
A model $\A \in \K^*$ is \emph{saturated} if for all models $\B, \C \in \K^*$ such that $\C \subseteq \A, \B$ and $\v \C \v < \v \A \v$, and any $b \in \B$, the quantifier free type $t_{qf}(b/\C)$ is realised in $\A$.

A model $\A \in \K^*$ is \emph{weakly saturated} if for all models $\B \in \K^*$ and all sets $A$ such that $A \subseteq \A \subseteq \B$ and $\vert A \vert < \vert \A \vert$, and any $b \in \B$, the quantifier free type $t_{qf}(b/A)$ is realised in $\A$. 
\end{definition}

\begin{lemma}\label{weaksat}
Every model in $\K_\k$ is weakly saturated.
\end{lemma}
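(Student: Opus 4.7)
The plan is to reduce to a size-$\k$ extension, invoke $\k$-categoricity, and then use the automorphism structure coming from the qf-indiscernible generator to transport the resulting realisation so that it sits over $A$.

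First, I would reduce to the case $\B\in\K^{*}_{\k}$. Since $\B\in\K^{*}$, there is $\C\in\K_{\ge\k}$ with $\B\subseteq\C$; inside $\C$ choose a substructure $\B'$ of size exactly $\k$ containing $\A\cup\{b\}$. By universality of $\K$ we have $\B'\in\K$, and $\B'\subseteq\C$ places $\B'\in\K^{*}_{\k}$. Since $\t(b/A)$ is realised in $\B'$, I may replace $\B$ by $\B'$. Setting $\A^{+}=\langle \A\cup\{b\}\rangle\subseteq\B$, we have $\A^{+}\in\K^{*}_{\k}$, and $\k$-categoricity yields an isomorphism $f\colon \A^{+}\to\A$. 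Then $f(b)\in\A$ realises $\t(f(b)/f(A))=\t(b/A)$, but over the shifted parameter set $f(A)$ rather than over $A$ itself.

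The remaining task is to move $f(A)$ back onto $A$ inside $\A$. By Lemma \ref{indgen2}, $\A=\langle a_{i}\rangle_{i<\k}$ with $(a_{i})_{i<\k}$ quantifier free indiscernible over $\emptyset$; Lemma \ref{ordind} guarantees that this indiscernibility is full (not merely ordered), so any permutation $\sigma$ of $\k$ induces an automorphism $\hat{\sigma}$ of $\A$ via $a_{i}\mapsto a_{\sigma(i)}$ extended to terms. Combined with the stability bound of Lemma \ref{stab} (at most $\v A\v + \l < \k$ types over any subset of size $<\k$), a back-and-forth construction inside $\A$ would produce $h\in\mathrm{Aut}(\A)$ with $h\circ (f\raj A)=\mathrm{id}_{A}$. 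The desired realisation of $\t(b/A)$ in $\A$ is then $h(f(b))$.

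The main obstacle is this last back-and-forth. At each stage one must exhibit a single permutation of $\k$ that simultaneously realises the current required assignment and respects all previously matched pairs; this relies critically on full indiscernibility rather than merely order indiscernibility, since arbitrary reshuffling of finitely many indices must still yield an automorphism. The stability bound keeps the number of types to be handled below $\k$ at each stage, so the construction can be carried to completion in at most $\v A\v+1$ stages.
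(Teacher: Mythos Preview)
Your reduction to $\B\in\K^{*}_{\k}$ and the use of $\k$-categoricity to obtain an isomorphism $f\colon\A^{+}\to\A$ are fine. The genuine gap is the final back-and-forth: the permutation automorphisms $\hat{\sigma}$ are \emph{not} rich enough to carry it out, and appealing to any larger supply of automorphisms makes the argument circular.

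Concretely, $\hat{\sigma}$ sends $t(a_{i_{1}},\ldots,a_{i_{n}})$ to $t(a_{\sigma(i_{1})},\ldots,a_{\sigma(i_{n})})$; it preserves the \emph{term shape}. There is no reason the element $a\in A$ and its image $f(a)\in\A$ are expressible by the same term in the generators. Take $\K$ to be the universal class of $\mathbb{F}_{2}$-vector spaces and let $(a_{i})_{i<\k}$ be a basis of $\A$: the elements $a_{0}$ and $a_{0}+a_{1}$ have the same quantifier-free type over $\emptyset$, yet no permutation of the basis sends one to the other. So already at the very first step of your back-and-forth (extending the empty map by a single pair) the required $\hat{\sigma}$ may fail to exist. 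If instead you try to run the back-and-forth with arbitrary automorphisms, the forth step asks you to realise a prescribed qf-type over a set of size $<\k$ inside $\A$ --- which is exactly the weak saturation you are trying to prove. The stability bound from Lemma~\ref{stab} controls how many types there are, but gives no mechanism for realising them.

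The paper avoids this circularity by taking the opposite route: it \emph{constructs} one weakly saturated model of size $\k$ directly (an increasing chain $(\A_{j})_{j\le\k}$ where at each successor step one adjoins realisations of a maximal family of types over a small submodel; Lemma~\ref{stab} guarantees the process terminates without the model outgrowing $\k$), and only then invokes $\k$-categoricity to transfer weak saturation to every model of size $\k$. No homogeneity or automorphism argument is needed.
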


\begin{proof}
We will construct a weakly saturated model of size $\kappa$, and the result will then follow from $\k$-categoricity.

   
We will build models $\A_i \in \K_\k$, $i \le\k$, and show that $\A_\k$ is weakly saturated.
Let $\pi: \k \to (\k \cup \{-1\}) \times \k$ be a bijection such that if $i<\k$ and $\pi(i)=(j,k)$, then $j<i$.
For each $i\le \k$, denote $\pi[i]=\{\pi(j) \, | \, j<i \}$.

\begin{claim}\label{claim1}
There are models $\A_j \in \K$, $j  \le \k$ such that $dom(\A_j)=\{(\a, \b) \in \pi(\k) \, | \, \a<j, \b <\k\}$,
$\A_j \subseteq \A_k$ whenever $j<k$, and for each $j$, the model $\A_{j+1}$ satisfies a maximal collection of quantifier free types over the model 
$\langle \pi[j] \rangle$.
In other words, for each $j<\k$, there is a set $X \subseteq \v j \v +\l$ and a collection of quantifier free types $(p_i)_{i \in X}$ over the model $\langle \pi[j] \rangle$ satisfying the following conditions:
\begin{itemize} 
\item $\A_{j+1}$ realises $p_i$ for each $i \in X$;
\item $X$ is a maximal such collection: if there is some $\C \in \K^*$ such that $\A_{j+1} \subseteq \C$ and $\C$ realises some quantifier free type $p$ over $\langle \pi[j] \rangle$, then $p=p_i$ for some $i \in X$.
\end{itemize}
\end{claim}
 
\begin{proof}
To construct $\A_0$, just take any model from $\K_\k$ with the right universe.
For a limit ordinal $i$, let $\A_i=\bigcup_{j<i} \A_j$.
For each $j$, we construct the model $\A_{j+1}$ and the set $X$ as follows.

First, we start constructing an increasing chain of models $\B_i$ and sets $X_i$ in the following way
(this process will eventually terminate in less that $(\v j \v + \l)^+$ steps, as we shall see). 
Set $\B_0=\A_j$ and $X_0=\emptyset$. 
For each $i$, construct $\B_{i+1}$ and $X_{i+1}$ as follows (at limit steps, take unions).
If there is some $\C \in \K^*$ such that $\B_i \subseteq \C$ and some $b \in \C \setminus \B_i$
such that $\t(b/\langle \pi[j] \rangle)$ is not satisfied in $\B_i$, let $\B_{i+1}=\langle \B_i, b \rangle$,
denote $p_i=\t(b/\langle \pi[j] \rangle)$ and let $X_{i+1}=X_i \cup \{i \}$;
otherwise set $\A_{j+1}=\B_{i}$ and $X=X_{i}.$  

The process terminates in less than $(\v j \v + \l)^+$  steps.
Indeed, if not, we would have constructed a model $\B^+$ that satisfies $(\v j \v +\l)^+$ 
many types over $\langle \pi[j] \rangle$, a set of size $\v j \v$, a contradiction against Lemma \ref{stab}.
Thus, we have added at most $\k$ many elements and obtained a model $\A_j$ that has a universe of size $\k$.
Rename the new elements (and add some more elements if needed) so that they form the set $\{(j, \b)  \, | \, \b<\k \}$.
 \end{proof} 
  
 We claim that the model $\A_\k$ built this way is weakly saturated. 
 Suppose $A \subseteq \A_\k$, $\v a \v < \k$, let $\C \supseteq \A_\k$ and let $b \in \C$.
 Now, there is some $i<\k$ such that $A \subseteq \pi[i]$.
 By Claim \ref{claim1}, there is some $b' \in \A_{i+1}$ such that $\t(b'/ \langle \pi[i] \rangle)=t(b/\langle \pi[i] \rangle)$.
 Since $\A_{i+1} \subseteq \A_\k$, this proves that $\A_\k$ is weakly saturated.
\end{proof}

\begin{lemma}\label{pienityyppi}
Suppose $A \subseteq \B$, $\B \in \K_{<\k}^*$,   the sequence $(a_i)_{i<\l^+} \subseteq \B$ is quantifier free indiscernible over $A$, $\v A \v < \l^+$, let $\C \in \K^*$ be such that $\B \subseteq \C$, and let
$b \in \C$.
Then, there is a quantifier free type $p$ such that 
$$\vert \{i<\a \, | \, \t(a_ib/A) \neq p\}| < \l^+.$$ 
\end{lemma}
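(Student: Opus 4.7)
First, I embed everything into a suitably large model: since $\C \in \K^*$, we have $\C \subseteq \D$ for some $\D \in \K_{\ge \k}$, and we may fix a submodel $\M \subseteq \D$ with $\v \M \v = \k$ containing $A \cup \{b\} \cup \{a_i : i<\l^+\}$. Then $\M \in \K_\k$ is weakly saturated by Lemma \ref{weaksat}. Since $\v A \cup \{b\}\v \le \l < \l^+$, Lemma \ref{delta0} applied to the sequence $(a_i)_{i<\l^+}$ with base $A \cup \{b\}$ produces a subsequence $(a_{i_\a})_{\a<\l^+}$ that is quantifier free indiscernible over $A \cup \{b\}$. All entries of this subsequence realize a single q.f. type over $A \cup \{b\}$, which fixes the candidate $p = \t(a_{i_\a} b / A)$; already $\v\{i<\l^+ : \t(a_i b/A) = p\}\v \ge \l^+$.

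It remains to show the complement $S := \{i<\l^+ : \t(a_i b/A) \ne p\}$ has size $<\l^+$. Suppose, toward contradiction, $\v S\v = \l^+$. Re-enumerating and applying Lemma \ref{delta0} again to $(a_i)_{i \in S}$ with base $A \cup \{b\}$, I obtain a subsequence $(a_{j_\b})_{\b<\l^+}$ q.f. indiscernible over $A \cup \{b\}$ whose common q.f. type $q$ differs from $p$. Fix a q.f. formula $\varphi(x, y, \bar z)$ and $\bar a \in A$ distinguishing $p$ and $q$, arranged so that $\varphi(a_{i_\a}, b, \bar a)$ holds and $\varphi(a_{j_\b}, b, \bar a)$ fails. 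Both $\{i_\a\}$ and $\{j_\b\}$ are cofinal in $\l^+$, so interleaving them in increasing order of indices yields a q.f. indiscernible sequence $(c_\gamma)_{\gamma<\l^+}$ over $A$ on which $\varphi(c_\gamma, b, \bar a)$ takes both truth values cofinally in $\l^+$.

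The main obstacle is converting this alternating pattern into a violation of Lemma \ref{stab}. I follow the standard order-property strategy: for every $T \subseteq \l^+$ whose characteristic function takes both values cofinally, I aim to produce an element $b^T \in \M$ with $\varphi(c_\gamma, b^T, \bar a) \leftrightarrow \gamma \in T$. The q.f. indiscernibility of $(c_\gamma)$ over $A$ ensures that every finite piece of the target pattern is witnessed by $b$ itself under a suitable re-choice of indices; the finite pieces cohere via the q.f. type of the $(c_\gamma, b)$-configuration over $A$ and yield a model in $\K$ realizing $b^T$ by an Ehrenfeucht-Mostowski construction as in Lemma \ref{EM}, and weak saturation (Lemma \ref{weaksat}) then transports $b^T$ into $\M$. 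Since different $T$'s give pairwise distinct q.f. types over $A \cup \{c_\gamma : \gamma<\l^+\}$, a set of size $\l^+$, we obtain $2^{\l^+}$ q.f. types, contradicting the bound of $\l^+$ from Lemma \ref{stab}. The subtlety that makes this step the technical heart of the proof is that the amalgamation property is not yet available at this point in the paper (Lemma \ref{AP} is proved later using average types), so the EM construction producing each $b^T$ must be done carefully enough to land in a $\K^*$-extension of $\M$ without invoking amalgamation.
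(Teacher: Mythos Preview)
Your overall shape---pass to two cofinal subfamilies distinguished by a single formula $\varphi$, then manufacture $2^{\l^+}$ types to contradict Lemma~\ref{stab}---matches the paper. The gap is in what you correctly flag as the technical heart: the construction of $b^T$.

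You claim the finite pieces of the target type ``cohere via the q.f.\ type of the $(c_\gamma,b)$-configuration,'' but they need not. Your interleaved sequence $(c_\gamma)$ is indiscernible over $A$ only, not over $Ab$; given a finite $\varphi$-pattern, two different order-preserving choices of witnessing indices $\delta_1<\cdots<\delta_n$ can yield genuinely different full types $\t(b,c_{\delta_1},\ldots,c_{\delta_n}/A)$, so there is no well-defined finitely consistent type for $b^T$ to realize, and the EM step has nothing coherent to act on. (Each of the two subsequences you extracted is indiscernible over $Ab$, but that says nothing about mixed tuples drawn from both.) A second problem: even granting some $\D\in\K$ realizing $b^T$ over $A\cup\{c_\gamma\}$, weak saturation of $\M$ only transports types from models \emph{extending} $\M$, and without AP you cannot arrange $\D\supseteq\M$.

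The paper resolves both issues at once. After reducing to an even/odd split it applies Lemma~\ref{delta0} to the sequence of \emph{pairs} $(a_{2i},a_{2i+1})$ with base $Ab$, making the pairs themselves indiscernible over $Ab$; this is precisely the coherence you were missing. It then EM-extends the pair sequence to length $\k$, still over $Ab$, so that $b$ lives in a model containing $\D'=\langle A,a_i\rangle_{i<\k}$, a model of size $\k$. Weak saturation of $\D'$ now produces a single $b'\in\D'$ with the same type as $b$ over $\langle A,a_i\rangle_{i<\l^+}$. No separate $b^T$ is ever built: for each suitable $Y\subseteq\l^+$, the permutation of $(a_i)_{i<\l^+}$ sending the even indices onto $Y$ extends to an automorphism of $\D'$, and the images of the one element $b'$ furnish the $2^{\l^+}$ pairwise distinct types.
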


\begin{proof}
If not, then there is a quantifier free type $p$ such that 
$$\vert \{i<\l^+ \, | \, \t(a_ib/A) = p\}| =\vert \{i<\l^+ \, | \, \t(a_ib/A) \neq p\}|=\l^+.$$
We may without loss assume $\t(a_ib/A)=p$ if and only if $i$ is even (i.e. $i=2j$ for some ordinal $j$).
Since $\C \in \K^*$, we may apply  Lemma \ref{delta0} to find some
$X \subseteq \l^+$ such that $\v X \v = \l^+$ and the sequence of ordered pairs
$(a_{2i}, a_{2i+1})_{i \in X}$ is quantifier free indiscernible over $Ab$.

Relabeling the elements of $X$, we may assume $X=\l^+$, 
and with an Ehrenfeucht-Mostowski construction, we find a model $\mathcal{M} \in \K^*$ containing $Ab$ and some $a_i \in \mathcal{M}$, $\l^+\le i <\k$,
such that $(a_{2i}, a_{2i+1})_{i < \k}$ is quantifier free indiscernible over $Ab$.  
Note that the sequence $(a_i)_{i<\k}$ obtained this way is quantifier free indiscernible over $A$.
Let $\D=\langle A,b,a_i \rangle_{i<\k}$ and $\D'=\langle A,a_i \rangle_{i<\k}$.
By Lemma \ref{weaksat}, there is some $b' \in \D'$ such that $\t(b'/\langle A a_i \rangle_{i<\l^+})=\t(b/\langle A a_i \rangle_{i<\l^+})$.

For each of the $2^{\l^+}$ many sets $Y \subset \l^+$ such that $\v Y \v =\v \l^+ \setminus Y \v =\l^+$,
there is a permutation of the sequence $(a_i)_{i<\l^+}$ 
that takes even indices to $Y$ and odd indices to $\l^+ \setminus Y$.
These extend to permutations of the sequence $(a_i)_{i<\k}$ 
(just fix $a_i$ for each $i \ge \l^+$) and thus to automorphisms $f_i$, $i<2^{\l^+}$, of $\D'$. 
If $i \neq j$, then $\t(f_i(b')/ A \cup \{a_i\}_{i<\l^+}) \neq \t(f_j(b')/ A \cup \{a_i\}_{i<\l^+})$,
so we get $2^{\l^+}$ many distinct quantifier free types over a set of size $\l^{+}$, which contradicts Lemma \ref{stab}.
 


 
\end{proof}

Lemma \ref{pienityyppi} makes it possible to define the notion of average type for a quantifier free indiscernible sequence.

\begin{definition}\label{avdef}
Let $\a \ge \l^+$, let $\A \in \K^*$, let $(a_i)_{i<\a} \subseteq \A$ be a quantifier free indiscernible sequence, and let $A \subseteq \A$.
We say a tuple $b$ satisfies the \emph{average type} over $A$  of the sequence (denoted $Av((a_i)_{i<\a}/A))$ if it holds for any $a \in A$ that $\v\{i<\a \, | \, \t(a_ia/\emptyset)=\t(ba/\emptyset)\} \v=\v \a \v$.
\end{definition}

Note that eventhough it might naively seem so,
Lemma \ref{pienityyppi} does not actually put any restrictions on the size of the set $A$ in Definition \ref{avdef}.
In the statement of the lemma, there is a set denoted by $A$ that needs to be smaller than $\l^+$, 
but in the definition, this set is assumed to be empty, so the requirement is satisfied automatically.
In contrast, the set denoted by $A$ in the definition has a different role:
the only requirement it is assumed to satisfy as a set is being included in $\A$, 
and the main condition is set only for finite tuples from $A$, which here play the role that the tuple $b$ 
has in the statement of Lemma \ref{pienityyppi}.
 
We also note that precisely since it is the finite tuples from $A$ 
that are considered when checking the requirements of Definition \ref{avdef},
the notion of an average type can also be formulated in the more general case where
$\A \in \K$, as long as there is some submodel $\B \subseteq \A$ such that $\B \in \K^*$,
$(a_i)_{i<\a} \subseteq \B$,
and for each finite tuple $a \in \A$, it holds that $\langle \B, a \rangle \in \K^*$. 
To prove that the class $\K^*$ is an AEC, 
we will, at one point, need the notion in this broader context.
Thus, we will also give the following definition of $\K$ average types as a generalisation of Definition \ref{avdef}.
Although Definition \ref{avdef} is a special case of this more general definition,
we will use it as our main definition for average types since it is more intuitive,
and the more general definition is needed only once in this paper.

\begin{definition}\label{avdefK}
Let $\a \ge \l^+$, let $\A \in \K$,  and let $I=(a_i)_{i<\a} \subseteq \A$ be a quantifier free indiscernible sequence.
Suppose there is some $\B \subseteq \A$ such that $\B \in \K^*$, $I \subseteq \B$, and $\langle \B, a \rangle \in \K^*$
for each $a \in \A$.
We say a tuple $b$ satisfies the $\K$ \emph{average type} over $\A$ of the sequence (denoted $Av_\K(I/\A))$ if it holds for any $a \in \A$ that $\v\{i<\a \, | \, \t(a_ia/\emptyset)=\t(ba/\emptyset)\} \v=\v \a \v$.
\end{definition}

\begin{lemma}\label{avexists0}
Suppose $I=(a_i)_{i<\a} \subseteq \A \in \K$ is quantifier free indiscernible over  $\emptyset$, $\a \ge \l^+$, 
and there is some $\B \subseteq \A$ such that $\B \in \K^*$, $I \subseteq \B$, and $\langle \B, a \rangle \in \K^*$
for each $a \in \A$. 
Then, there is some $\C \in \K$ such that $\A \subseteq \C$ and $b \in \C$ such that $\t(b/\A)=Av_\K(I/\A)$. 
\end{lemma}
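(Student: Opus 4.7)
The plan is to identify $Av_\K(I/\A)$ with an explicit quantifier-free type $p(x)$ over $\A$, verify that $p$ is finitely satisfied by members of $I$, and then build $\C\in\K$ extending $\A$ that realises $p$ by exploiting the universal axiomatisation of $\K$. For the first task, fix a finite $\bar a\in\A$ and let $\B_0=\langle(a_i)_{i<\l^+}\rangle$; then $\B_0\subseteq\B$, $|\B_0|=\l^+<\k$, and $\B_0\in\K^*$ (since $\B\in\K^*$ and $|\B_0|=\l^+>\l$), so $\B_0\in\K_{<\k}^*$, while $\langle\B_0,\bar a\rangle\subseteq\langle\B,\bar a\rangle\in\K^*$ gives $\langle\B_0,\bar a\rangle\in\K^*$. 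Applying Lemma \ref{pienityyppi} with $A=\emptyset$ and $b=\bar a$ then produces a qf type $p_{\bar a}$ with $|\{i<\l^+:\t(a_i\bar a/\emptyset)\neq p_{\bar a}\}|<\l^+$. Interleaving two disjoint $\l^+$-subsequences of $I$ into a common $\l^+$-sequence (still qf indiscernible over $\emptyset$) and reapplying Lemma \ref{pienityyppi} shows that $p_{\bar a}$ does not depend on the chosen $\l^+$-subsequence, so in fact $|\{i<\a:\t(a_i\bar a/\emptyset)=p_{\bar a}\}|=|\a|$. I set $p(x)=\bigcup_{\bar a}p_{\bar a}(x,\bar a)$.

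For finite satisfiability of $p$, given $\bar a_1,\dots,\bar a_n\in\A$, concatenate to $\bar a=\bar a_1\cdots\bar a_n$ and apply Lemma \ref{stab} inside a model in $\K_\k$ containing $\bar a$ together with a $\k$-sized subsequence of $I$ (such a model exists since $\langle\B,\bar a\rangle\in\K^*$). This bounds the number of qf types $\t(a_i\bar a/\emptyset)$ by $\l<|\a|$, so pigeonhole (after covering $I$ by $\k$-sized subsequences if $|\a|>\k$) yields a joint type realised by $|\a|$-many $a_i$, whose projection onto each $(x,\bar a_j)$ must equal $p_{\bar a_j}$ by the uniqueness established above. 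Hence $p_{\bar a_1}\cup\cdots\cup p_{\bar a_n}$ is already realised inside $\A$ by any such $a_i$.

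The main obstacle is the third step: realising the full type $p$ in some $\C\in\K$ with $\A\subseteq\C$, without appealing to amalgamation, which is only established later. The key point is that $\K$ is axiomatised by universal sentences, so one can build $\C$ by adjoining a fresh element $b$ to $\A$, closing under $L$-terms, and identifying $t(b,\bar a)=s(\bar a)$ precisely when $p(x)$ forces it. The finite satisfiability established above guarantees that every universal axiom $\forall\bar x\,\phi(\bar x)$ of $\K$ holds on all tuples of $\C$: any tuple involving $b$ has the same qf type as some corresponding tuple involving $a_i$ inside $\A$, where $\phi$ already holds. An Ehrenfeucht--Mostowski construction in the style of Lemma \ref{EM}, run with the diagram of $\A$ together with the constraints from $p$ as input, formalises this and produces the required $\C\in\K$ containing both $\A$ and a realisation $b$ of $p=Av_\K(I/\A)$.
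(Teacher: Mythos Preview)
Your proposal is correct and the underlying idea matches the paper's, but the paper's execution is dramatically shorter and cleaner, so a brief comparison is warranted.

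Your steps 1 and 2 are redundant: the well-definedness of $Av_\K(I/\A)$ (that for each finite $\bar a\in\A$ a unique ``majority'' type $p_{\bar a}$ exists and is realised by $|\a|$-many $a_i$) is exactly what the discussion preceding Definition~\ref{avdefK} already records, and finite satisfiability is immediate from that --- concatenate the tuples and read off the majority type once. Your detour through Lemma~\ref{stab} and $\k$-sized subsequences is unnecessary.

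For step 3, both you and the paper exploit the same structural fact: every finitely generated substructure $\langle \bar a,b\rangle$ of the putative extension is isomorphic to some $\langle \bar a,a_i\rangle\subseteq\A$, hence lies in $\K$. The paper packages this in two lines: first-order compactness produces \emph{some} $L$-structure $\D\supseteq\A$ realising $Av_\K(I/\A)$ via an element $b$; then $\C=\langle\A,b\rangle$ is the directed union of the $\langle\bar a,b\rangle$, each of which is (isomorphic to a substructure of $\A$, hence) in $\K$, so $\C\in\K$ since universal classes are closed under directed unions. Your explicit term-model construction together with verification of ``universal axioms'' amounts to exactly this, just stated by hand; note, though, that $\K$ need only be axiomatised in $L_{\infty,\omega}$, not first-order, so the directed-union phrasing is the safer way to say it. Finally, the reference to an Ehrenfeucht--Mostowski construction is misplaced: EM extracts indiscernibles via Erd\H{o}s--Rado and is not the device that realises a finitely satisfiable type --- that is plain compactness.
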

 
\begin{proof}
By compactness of first order logic,
there is a model $\D$ (not necessarily in $\K$)
such that $\A \subseteq \D$ and some $b \in \D$
such that $\t(b/\A)=Av(I/\A)$.
Now, for each $a \in \A$, there is some $i<\a$
such that $\langle a, b \rangle \cong \langle a, a_i \rangle$,
so $\langle a, b \rangle \in \K$.
Thus, $\C=\langle \A, b \rangle$
is a union of a directed system of models from $\K$,
so $\C \in \K$.

 
 \end{proof} 
  
 \begin{corollary}\label{avexists}
 Suppose $I=(a_i)_{i<\a} \subseteq \A \in \K^*$ is quantifier free indiscernible over  $\emptyset$  and $\a \ge \l^+$. 
Then, there is some $\B \in \K^*$ such that $\A \subseteq \B$ and $b \in \B$ such that $\t(b/\A)=Av(I/\A)$.

Furthermore, $I^\frown b$ is indiscernible over $A$ if and only if $\t(b/A \cup I)=Av(I/A \cup I)$.
 \end{corollary}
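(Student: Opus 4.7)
The proof splits into two parts: (i) existence of $\B$ and $b$, and (ii) the characterisation of when $I^\frown b$ is indiscernible over $A$.

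For (i), the plan is to upgrade Lemma~\ref{avexists0} from $\K$ to $\K^*$ by applying it not to $\A$ itself but to a large ambient model. Since $\A \in \K^*$, fix some $\D \in \K_{\ge \k}$ with $\A \subseteq \D$, and apply Lemma~\ref{avexists0} with $\D$ playing the role of ``$\A$'' in that lemma and our $\A$ playing the role of ``$\B$''. The hypotheses are straightforward to verify: $\A \in \K^*$ and $I \subseteq \A$ are given, and for each tuple $a \in \D$ the submodel $\langle \A, a \rangle$ lies in $\K^*$ because it has cardinality at least $|\A|>\l$ and sits inside $\D \in \K_{\ge \k}$. The lemma then produces $\C \in \K$ with $\D \subseteq \C$ and $b \in \C$ realising $Av_\K(I/\D)$; setting $\B = \C$ works, since $|\B| \ge \k$ makes $\B$ its own witness in the definition of $\K^*$, and restricting the $Av_\K$-condition from $\D$ to $\A \subseteq \D$ gives $\t(b/\A) = Av(I/\A)$.

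For (ii), both directions are direct combinatorics on the defining condition of $Av$ together with the indiscernibility of $I$ over $A$. Suppose first that $I^\frown b$ is indiscernible over $A$, and pick any finite tuple $c = (a', a_{j_1}, \ldots, a_{j_n})$ from $A \cup I$ with $a' \in A$ and $j_1 < \ldots < j_n$. Positional indiscernibility of $I^\frown b$ gives $\t(a_{j_1} \ldots a_{j_n} a_k/a') = \t(a_{j_1} \ldots a_{j_n} b/a')$, and hence $\t(a_k c/\emptyset) = \t(bc/\emptyset)$, for every $k > j_n$, so the average-type condition holds. Conversely, assume $\t(b/A \cup I) = Av(I/A \cup I)$. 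To check indiscernibility of $I^\frown b$ over $A$, it suffices, by indiscernibility of $I$ itself, to verify for a single $a' \in A$ and $i_1 < \ldots < i_m$ that $\t(a_{i_1} \ldots a_{i_{m-1}} b/a') = \t(a_{i_1} \ldots a_{i_m}/a')$. Applying the average-type condition to the tuple $(a', a_{i_1}, \ldots, a_{i_{m-1}}) \in A \cup I$ yields some $k > i_{m-1}$ with $\t(a_k/a' a_{i_1} \ldots a_{i_{m-1}}) = \t(b/a' a_{i_1} \ldots a_{i_{m-1}})$, and indiscernibility of $I$ over $A$ then lets us replace $a_k$ by $a_{i_m}$, finishing the check.

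The only subtle point is the pivot in (i): applying Lemma~\ref{avexists0} to the \emph{large} model $\D$ rather than directly to $\A$ is what upgrades the conclusion from $\K$ to $\K^*$. Without this move, Lemma~\ref{avexists0} would only yield $\langle \A, b \rangle \in \K$, with no guarantee that this small extension embeds into anything of size $\ge \k$.
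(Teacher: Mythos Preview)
Your proof is correct and follows essentially the same approach as the paper. For part (i), the paper does exactly your pivot: it passes to an ambient $\A' \in \K_{\ge\k}$ containing $\A$, applies Lemma~\ref{avexists0} there, and observes that the resulting $\B \supseteq \A'$ lies in $\K^*$ because it contains a model of size $\ge\k$; for part (ii), the paper simply declares it a ``straightforward application of the definitions, as usually,'' so your explicit verification is more detailed than what the paper provides.
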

 
\begin{proof}
Since $\A \in \K^*$, there is some $\A' \in \K_{\ge\k}$ such that $\A \subseteq \A'$.
By Lemma \ref{avexists0}, there is some $\B \in \K$ and $b \in \B$ such that
$\t(b/\A')=Av_\K(I/\A')=Av(I/\A')$.
Since $\B$ contains $\A'$ and $\v \A' \v \ge \k$, we have $\B \in \K^*$.
 
The furthermore part is straightforward application of the definitions, as usually.
\end{proof}  
 
The next lemma is the key for showing that  $\K^*$ actually is an AEC.
Its proof is the only point where we need the more general definition of average types (Definition \ref{avdefK}). 
 
\begin{lemma}\label{peruslemma}
The following are equivalent:
\begin{enumerate}[(i)]
\item $\A \in \K^*$;
\item If $\B \subseteq \A$ and $\v \B \v=\l^+$, then $\B \in \K^*$;
\item There is some $\B \subseteq \A$ such that $\v \B \v=\l^+$ and for all  $a \in \A$, $\langle \B, a \rangle \in \K^*$.
\end{enumerate}
\end{lemma}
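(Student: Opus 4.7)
The directions (i) $\Rightarrow$ (ii) and (ii) $\Rightarrow$ (iii) are essentially tautological. Indeed, if $\A\subseteq\C$ with $\C\in\K_{\ge\k}$ and $\B\subseteq\A$ has cardinality $\l^+$, then $\B\subseteq\C$ exhibits $\B\in\K^*$, giving (ii). For (ii) $\Rightarrow$ (iii), which presupposes $|\A|\ge\l^+$, pick any submodel $\B\subseteq\A$ of cardinality $\l^+$; for each finite $a\in\A$, the submodel $\langle\B,a\rangle\subseteq\A$ has cardinality $\l^+$ and so lies in $\K^*$ by (ii).

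The substantive direction is (iii) $\Rightarrow$ (i). Fix $\B$ witnessing (iii). Since universal classes are closed under directed unions, $\A=\bigcup_{a\in\A}\langle\B,a\rangle$ is a directed union of members of $\K^*\subseteq\K$, hence $\A\in\K$; clearly $|\A|\ge|\B|=\l^+>\l$. It remains to produce some $\C\in\K_{\ge\k}$ with $\A\subseteq\C$. By Lemma \ref{delta} applied to $\B$ with $A=\emptyset$ and $\mu=\l^+$, there is a quantifier free indiscernible sequence $I=(a_i)_{i<\l^+}\subseteq\B$. The plan is to build an increasing chain $(\A_\a)_{\a\le\k}$ of models in $\K$, strictly increasing at successor stages (with unions at limits), starting from $\A_0=\A$ and maintaining the invariant
\[ (*) \qquad \langle\B,a\rangle\in\K^*\text{ for every finite tuple } a\in\A_\a. \]
At a successor stage $\a$, $(*)$ is precisely the hypothesis of Lemma \ref{avexists0} for $\A_\a$ (with our $\B$ in the role of Lemma \ref{avexists0}'s $\B$), yielding some $\C_\a\in\K$ containing $\A_\a$ together with $b_\a\in\C_\a$ realising $Av_\K(I/\A_\a)$. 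A standard compactness tweak (adding the formulas $x\neq a$ for each $a\in\A_\a$ to the partial type defining the average, which is consistent since the average type has $\l^+$ many distinct realisations in $I$) secures $b_\a\notin\A_\a$. We then set $\A_{\a+1}=\langle\A_\a,b_\a\rangle\subseteq\C_\a$.

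The crucial point is preservation of $(*)$ at a successor stage $\a+1$. Take a finite $a\in\A_{\a+1}$ and write $a=t(d,b_\a)$ for some $L$-term $t$ and finite tuple $d\in\A_\a$. By the inductive hypothesis $\langle\B,d\rangle\in\K^*$, so there exists $\D\in\K_{\ge\k}$ with $\langle\B,d\rangle\subseteq\D$. Invoking Corollary \ref{avexists} inside $\D\in\K^*$ produces $\D'\in\K^*$ extending $\D$ together with some $b'\in\D'$ realising $Av(I/\D)$. Then $b'$ and $b_\a$ share the same quantifier free type over $\langle\B,d\rangle$ (both equal $Av(I/\langle\B,d\rangle)$), hence $\langle\B,d,b_\a\rangle\cong\langle\B,d,b'\rangle\subseteq\D'$. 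Since $\K^*$ is closed under isomorphism (by the isomorphism-invariance of $\K$) and under substructures of cardinality $>\l$, we conclude $\langle\B,d,b_\a\rangle\in\K^*$, and therefore $\langle\B,a\rangle\subseteq\langle\B,d,b_\a\rangle$ is in $\K^*$ as well. The invariant is preserved at limit stages since any finite tuple appears at some bounded stage. Finally $\A_\k\in\K$ as a union of a chain in $\K$, $\A\subseteq\A_\k$, and $|\A_\k|\ge\k$ by strict increase, so $\A\in\K^*$.

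The main obstacle is precisely the preservation of $(*)$ at successor stages: since $\A_\a$ itself is not a priori known to lie in $\K^*$, we cannot just use a single ambient model of $\K_{\ge\k}$ containing all of $\A_\a$. Instead, one extracts the required $\K_{\ge\k}$-superstructure only for the small piece $\langle\B,d\rangle$ and swaps $b_\a$ with an isomorphic copy $b'$ realising the same average type inside it. This is the one place where having the $\K$-version of average types (Definition \ref{avdefK}) in hand is essential, since in the inductive step we do not yet know $\A_\a\in\K^*$.
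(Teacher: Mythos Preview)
Your proof is correct and follows essentially the same route as the paper: both build an increasing chain starting at $\A$ by repeatedly adjoining a realisation of $Av_\K(I/\A_\a)$ and verify that condition (iii) persists by pulling the pair $(d,b_\a)$ back, via the average type, into a model of size $\ge\k$ known to contain $\langle\B,d\rangle$. The only cosmetic difference is that the paper invokes weak saturation (Lemma~\ref{weaksat}) to place (a copy of) $b$ directly inside the size-$\k$ model $\C$, whereas you use Corollary~\ref{avexists} and an explicit isomorphism; also, your ``compactness tweak'' ensuring $b_\a\notin\A_\a$ is in fact automatic, since for every singleton $c\in\A_\a$ the formula $x\neq c$ already belongs to $Av_\K(I/\A_\a)$.
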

 
\begin{proof}
Clearly (i) $\Rightarrow$ (ii) $\Rightarrow$ (iii).

For (iii) $\Rightarrow$ (i), we note that by Lemma \ref{delta}, there is some quantifier indiscernible sequence $I=(a_i)_{i<\l^+} \subseteq \B$.
For each $a \in \A$, we have $\langle \B, a \rangle \in \K^*$, so the type $Av_\K(I/\A)$ is well-defined.
By Lemma \ref{avexists0}, there is some $b$ that realises $Av_\K(I/\A)$ and $\A'=\langle \A, b\rangle \in \K$.

 
We claim that $\A'$ satisfies (iii).
For this, it suffices to show that for any tuple $a \in \A$, it holds that $\langle \B, a, b \rangle \in \K^*$.
By our assumptions, there is some $\C \in \K_\k$ such that $\langle \B, a \rangle \subseteq \C$.  
The type $Av(I/\C)$ is consistent so by Lemma \ref{weaksat} 
we may without loss assume $b \in \C$.
Hence, $\A'$ satisfies (iii).

Any union of a chain of models that satisfy (iii) will also satisfy (iii), so setting $\A_0=\A$, $\A_1=\A'$, and so on, 
we can continue the construction until we obtain a model of size $\k$. 
\end{proof}

\begin{lemma}\label{aec}
The class $\K^*$ is an AEC, and $LS(\K^*)=\l^+$.
\end{lemma}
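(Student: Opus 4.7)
The plan is to verify the AEC axioms for $(\K^*,\subseteq)$ one by one, using Lemma \ref{peruslemma} as the main tool. Closure under isomorphism is inherited from the universal class $\K$. The strong-submodel relation is by stipulation just $\subseteq$, so the coherence axiom and the requirement that $\preceq$ refine the substructure relation are automatic. Unions of $\subseteq$-chains of $\K$-models are again in $\K$ (this is part of $\K$ being an AEC), so the only non-trivial content of the chain axiom for $\K^*$ is to check that the union remains in $\K^*$, and the only non-trivial content of Löwenheim-Skolem is to produce, inside any $\A\in\K^*$ and around a prescribed $A\subseteq\A$, a $\K^*$-submodel of size at most $|A|+\l^+$.

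For the chain axiom, let $(\A_i)_{i<\delta}$ be a $\subseteq$-chain in $\K^*$ with $\delta$ a limit, and let $\A=\bigcup_{i<\delta}\A_i$. Apply clause (iii) of Lemma \ref{peruslemma} to $\A_0$ to choose $\B_0\subseteq\A_0$ of size $\l^+$ with $\langle\B_0,a\rangle\in\K^*$ for every $a\in\A_0$. I claim $\B_0$ still witnesses (iii) for $\A$. Given any finite tuple $a\in\A$, pick $i$ with $a\in\A_i$; then $\langle\B_0,a\rangle$ is a substructure of $\A_i$ of size $|\B_0|+\l=\l^+$, so clause (ii) of Lemma \ref{peruslemma} applied to $\A_i\in\K^*$ yields $\langle\B_0,a\rangle\in\K^*$. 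Hence $\A$ satisfies (iii), and Lemma \ref{peruslemma} gives $\A\in\K^*$.

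For Löwenheim-Skolem, note that by definition of $\K^*$ every $\A\in\K^*$ has size $>\l$, so $LS(\K^*)\ge\l^+$. For the reverse inequality, fix $\A\in\K^*$ and $A\subseteq\A$, choose a witness $\B_0\subseteq\A$ of size $\l^+$ for clause (iii) applied to $\A$, and set $\B=\langle A\cup\B_0\rangle\subseteq\A$. Then $\B\in\K$ and $|\B|\le|A|+\l^+$. Since every $b\in\B$ lies in $\A$, clause (iii) applied to $\A$ gives $\langle\B_0,b\rangle\in\K^*$, so $\B_0\subseteq\B$ witnesses (iii) for $\B$, and Lemma \ref{peruslemma} gives $\B\in\K^*$.

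The mildly subtle point is the uniformity in the chain argument: a single witness $\B_0$ picked at the bottom of the chain must continue to certify $\K^*$-membership all the way up. This works precisely because clause (iii) of Lemma \ref{peruslemma} only tests $\B_0$ against its finite extensions $\langle\B_0,a\rangle$, and each such extension falls inside some $\A_i\in\K^*$ where clause (ii) of the same lemma does the heavy lifting; this is also why the Löwenheim-Skolem construction produces a $\K^*$-model of the correct size without needing to iterate.
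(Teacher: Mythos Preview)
Your proof is correct and follows the same approach as the paper, which simply says that the chain axiom follows from Lemma~\ref{peruslemma} and that the other requirements are easy to check. You have faithfully unpacked the details the paper omits: using clause~(iii) with a fixed $\B_0$ to handle unions of chains, and combining clauses~(ii) and~(iii) to produce $\K^*$-submodels of the right size for L\"owenheim--Skolem.
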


\begin{proof}
It follows from Lemma \ref{peruslemma} that unions of chains of models from $\K^*$ are in $\K^*$.
The other requirements are easy to check. 
\end{proof}

We will assume that all models mentioned from now on are in the class $\K^*$ unless otherwise mentioned.
 
\begin{remark}\label{avremark}
Note that if $\A \in \K^*$ and $I \subset \A$ is an indiscernible sequence of length at least $\l^+$, then Corollary \ref{avexists} makes it possible to find some $b$ such that $b$ realises $Av(I/\A)$ and $\langle \A, b \rangle \in \K^*$.
Indeed, the lemma states that there is a model $\B \in \K^*$ and $b \in \B$ that is as wanted.
Since $\A, \B \in \K^*$ and $\A \subseteq \langle \A, b \rangle \subseteq \B$, we have $\langle \A, b \rangle \in \K^*$.
Thus, the formulation of the next lemma makes sense.
\end{remark}

\begin{lemma}\label{avind}
Suppose $\A \in \K^*$, $I=(a_i)_{i<\a} \subseteq \A$ is indiscernible over $A \subseteq \A$, $\v A \v < \a$, and
$\a \ge \l^+$.
Let $\b > \v A \v$, and let $\B_i=\langle \A, b_j \rangle_{j<i}$, $i<\b$, be such that $\t(b_i/\B_i)=Av(I/\B_i)$.
Then, the sequence $(b_i)_{i<\b}$ is quantifier free indiscernible over $\A$.
\end{lemma}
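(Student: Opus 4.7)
The plan is to prove by induction on $n \ge 1$ that any two strictly increasing $n$-tuples chosen from $(b_i)_{i<\b}$ realize the same quantifier free type over $\A$; this is exactly the statement of quantifier free indiscernibility over $\A$. The argument is a standard averaging one, made particularly clean here by the fact that $I \subseteq \A$: the behaviour of a finite tuple $c \in \A$ already records how $c$ interacts with $I$.

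For the base case $n = 1$, I would note that $\A \subseteq \B_i$ for every $i$, so each $b_i$ in particular realizes $Av(I/\A)$. Given $a \in \A$, Lemma \ref{pienityyppi} applied to an initial segment of $I$ of length $\l^+$ (indiscernible over $\emptyset$) produces a unique quantifier free type $p_a$ such that $\t(a_k a / \emptyset) = p_a$ for $\v\a\v$ many $k < \a$. The definition of $Av$ then forces $\t(b_i a / \emptyset) = p_a$ for every $i < \b$, so $\t(b_i/\A)$ does not depend on $i$.

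For the inductive step, assume the claim for $n$ and pick $i_1 < \ldots < i_{n+1}$ and $j_1 < \ldots < j_{n+1}$ in $\b$. By the inductive hypothesis there is a partial isomorphism fixing $\A$ pointwise and sending $b_{i_k} \mapsto b_{j_k}$ for $k \le n$. The key observation I would exploit is that, because $I \subseteq \A$, this type equality already incorporates arbitrary $a_k$'s: for every $a \in \A$ and every $k_1 < \ldots < k_m < \a$,
$$\t(a\, a_{k_1} \cdots a_{k_m}\, b_{i_1} \cdots b_{i_n} / \emptyset) = \t(a\, a_{k_1} \cdots a_{k_m}\, b_{j_1} \cdots b_{j_n} / \emptyset).$$
Now $b_{i_{n+1}}$ realizes $Av(I/\B_{i_{n+1}})$ and the tuple $(a, b_{i_1}, \ldots, b_{i_n})$ lies in $\B_{i_{n+1}}$, so $\t(b_{i_{n+1}} a b_{i_1} \cdots b_{i_n} / \emptyset)$ equals the unique majority type of $\t(a_k a b_{i_1} \cdots b_{i_n} / \emptyset)$ as $k$ varies. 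The displayed equality shows these majority types agree with the corresponding ones on the $j$-side, which in turn equal $\t(b_{j_{n+1}} a b_{j_1} \cdots b_{j_n} / \emptyset)$ by applying the average definition to $b_{j_{n+1}}$. Since $a \in \A$ is arbitrary, the $(n+1)$-case follows and the induction is complete.

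The main obstacle I anticipate is the bookkeeping required to justify that the ``majority type'' invoked above is well-defined and depends only on the interaction of its parameters with $I$. The former is Lemma \ref{pienityyppi} applied one finite tuple at a time (we only need a majority among $\l^+$ of the $a_k$'s, and indiscernibility of $I$ over $\emptyset$ transports this conclusion to the full sequence); the latter is automatic because $\t(a_k \bar{c}/\emptyset)$ is literally determined by $a_k$ together with $\bar{c}$. With these two points in hand, the induction runs without further surprises.
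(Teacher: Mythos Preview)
Your proposal is correct and follows essentially the same inductive argument as the paper: reduce the $(n{+}1)$-case to the $n$-case by replacing $b_{i_{n+1}}$ and $b_{j_{n+1}}$ with a common $a_k \in I \subseteq \A$, on which the inductive hypothesis then applies. One small point: what your induction actually establishes is \emph{order} indiscernibility (increasing tuples only), and the paper explicitly begins by invoking Lemma~\ref{ordind} to upgrade this to full indiscernibility over~$\A$; your opening sentence conflates the two.
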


\begin{proof}
By lemma \ref{ordind}, it suffices to show that the sequence is order indiscernible, i.e. that for all $i_1 < \ldots < i_n <\b$ and $j_1 < \ldots < j_n < \b$, it holds that $\t(b_{i_1}, \ldots, b_{i_n}/\A)=\t(b_{j_1}, \ldots, b_{j_n}/\A)$.

If $i<j<\b$ and $a \in \A$, then by the definition of average types, there is a set $X \subseteq \a$ such that $\v X \v =\a$,
$\v \a \setminus X \v < \a$ and for each $k \in X$,
$$\t(b_i a/\emptyset)=\t(a_k a/\emptyset)=\t/(b_ja/\emptyset),$$
so $\t(b_i/\A)=\t(b_j/\A)$.

Suppose that for all $a \in \A$, $i_1 < \ldots < i_n <\b$ and $j_1 < \ldots < j_n <\b$, it holds that
$\t(b_{i_1}, \ldots, b_{i_n}/\A)=\t(b_{j_1}, \ldots, b_{j_n} /\A)$.
For the induction step, let $a \in \A$, $i_1 < \ldots < i_{n+1} < \b$ and $bj_1 < \ldots < j_{n+1} < \b$.
By our assumptions and the definition of average types, there is a set $Y \subseteq \a$ such that $\v Y \v =\a$,
$\v \a \setminus Y \v < \a$, and for each $k \in Y$,
$$\t(b_{i_1}, \ldots, b_{i_n}, b_{i_{n+1}}, a / \emptyset)=\t(b_{i_1}, \ldots, b_{i_n}, a_k, a / \emptyset),$$
and 
$$\t(b_{j_1}, \ldots, b_{j_n}, b_{j_{n+1}}, a / \emptyset)=\t(b_{j_1}, \ldots, b_{j_n}, a_k, a / \emptyset).$$
Since $a_k \in \A$,
the inductive assumption gives us
$$\t(b_{i_1}, \ldots, b_{i_n}, a_k, a / \emptyset)=\t(b_{j_1}, \ldots, b_{j_n}, a_k, a / \emptyset),$$
and thus 
$$\t(b_{i_1}, \ldots, b_{i_n}, b_{i_{n+1}}, a / \emptyset)=\t(b_{j_1}, \ldots, b_{j_n}, b_{j_{n+1}}, a / \emptyset),$$
as wanted.
\end{proof}

\begin{lemma}\label{universal}
If $\A \in \K_{\le \k}^*$ and $\mu$ is a regular cardinal such that $\l<\mu \le \vert \A \vert$, then $\A$ is $\mu$-universal.
\end{lemma}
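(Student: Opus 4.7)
The plan is to reduce $\mu$-universality of $\A$ to a form of $\mu$-saturation, namely the claim that every quantifier free type $p$ over a set $C \subseteq \A$ with $|C| < \mu$ that is consistent in $\K^*$ is realised in $\A$. Once this saturation property is established, $\mu$-universality follows by a standard transfinite element-by-element construction: given $\B \in \K^*$ with $|\B| < \mu$, enumerate $\B = (b_i)_{i<|\B|}$ and build an embedding $f : \B \to \A$ by choosing at each stage $i$ an element $f(b_i) \in \A$ realising the quantifier free type of $b_i$ over $\{b_j\}_{j<i}$ translated via $f$. By regularity of $\mu$ (and since $\mu > \l$), the parameter set $f(\{b_j\}_{j<i})$ has cardinality $< \mu$ throughout, so the saturation claim supplies the next $f(b_i)$.

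For the saturation step, I fix $C \subseteq \A$ with $|C| < \mu$ and $p$ realised by some $b$ in some $\B' \in \K^*$ containing $\langle C \rangle$. I pass to a $\K_\k$-extension $\A^+ \supseteq \A$, which exists because $\A \in \K^*$, and is weakly saturated by Lemma~\ref{weaksat}. The argument then has two steps: first, realise $p$ in $\A^+$ itself; second, transfer a realisation from $\A^+$ down to $\A$.

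For the first step, I produce a common $\K^*$-extension of $\A^+$ and $\B'$ over $C$ via the average-type machinery. Inside $\B'$, Lemma~\ref{delta} yields a $\l^+$-length quantifier free sequence $I$ indiscernible over $C$ with $b$ as first element; Corollary~\ref{avexists} together with Lemma~\ref{avind} then allows me to extend $\A^+$ by elements realising the iterated $\K$-average of $I$, giving an extension $\A^{++} \in \K^*$ of $\A^+$ containing a copy of $b$ realising $p$ over $C$. Weak saturation of $\A^+$ applied inside $\A^{++}$ then places a realisation $a^+$ of $p$ in $\A^+$. For the second step, using the $\mu$-length quantifier free indiscernible sequence in $\A$ over $C$ provided by Lemma~\ref{delta} together with the representation of $\A^+$ as generated by a $\k$-length quantifier free indiscernible sequence (Lemma~\ref{indgen2}), I write $a^+$ as a term in finitely many generating indiscernibles and then apply a Delta-system / pigeonhole refinement in the style of Lemma~\ref{delta0} to locate an element of $\A$ with the matching combinatorial shape, hence the same quantifier free type over $C$.

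The hardest part will be the first step. Realising $p$ inside $\A^+$ amounts to amalgamating $\A^+$ and $\B'$ over the small base $C$, and must be carried out without invoking a global amalgamation property for $\K^*$, which is not yet available at this point in the paper. The average-type machinery is exactly what handles this small-base amalgamation, but verifying that the resulting model lies in $\K^*$ and that the two copies of $C$ are correctly identified requires careful bookkeeping of which average types are being realised at each stage.
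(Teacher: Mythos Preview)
Your reduction of $\mu$-universality to a strong saturation claim is sound, but the proof of that claim has a genuine gap in the first step. The average type $Av(I/\A^+)$ is only defined when the indiscernible sequence $I$ and the parameter set $\A^+$ live in a common ambient model (see Definition~\ref{avdef} and the surrounding discussion). In your setup, $I \subseteq \B'$ and $\A^+$ share only $\langle C\rangle$; there is no model containing both, so the expression $Av(I/\A^+)$ is meaningless, and Corollary~\ref{avexists} cannot be invoked to extend $\A^+$. What you are really attempting here is to amalgamate $\B'$ and $\A^+$ over $\langle C\rangle$, and AP is not available at this point in the paper (Lemma~\ref{AP} comes later). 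A secondary issue: Lemma~\ref{delta} gives you \emph{some} indiscernible sequence over $C$, but provides no control over its first element, so ``with $b$ as first element'' is unjustified as stated.

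The paper sidesteps the amalgamation issue entirely with a much shorter argument. It finds, via Lemma~\ref{delta}, a sequence $(a_i)_{i<\mu}\subseteq\A$ indiscernible over $\emptyset$, reduces to the case $\A=\langle a_i\rangle_{i<\mu}$, and then extends by an Ehrenfeucht--Mostowski construction to $\B=\langle a_i\rangle_{i<\k}\in\K_\k$. By $\k$-categoricity and the definition of $\K^*$, the model $\B$ is automatically $\k$-universal: any $\C\in\K^*$ of size $<\mu$ embeds into some model of size $\k$, hence into $\B$. The image of $\C$ lies in $\langle a_i\rangle_{i\in X}$ for some $X\subseteq\k$ with $|X|<\mu$, and since $(a_i)_{i<\k}$ is fully indiscernible (Lemma~\ref{ordind}), any permutation of it induces an automorphism of $\B$; choose one moving $X$ into $\mu$ to land the image inside $\A$. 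No saturation or amalgamation is needed, only categoricity and the symmetry of the generating indiscernible sequence.
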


\begin{proof}
 By Lemma \ref{delta}, there is a sequence $(a_i)_{i<\mu} \subseteq \A$ quantifier free indiscernible over $\emptyset$.
It suffices to show that the model generated by this sequence is $\mu$-universal, and thus we may without loss assume that $\A=\langle (a_i)_{i<\mu} \rangle$.

By an Ehrenfeucht-Mostowski construction, we obtain a model $\B \in \K^*_\k$ such that $\A \subseteq \B$ and a sequence $(a_i)_{\mu \le i < \k} \subseteq \B$ such that $\B=\langle (a_i)_{i<\k} \rangle$.
By $\k$-categoricity and the assumptions on the class $\K^*$, the model $\B$ is $\k$-universal.
Thus, $\A$ is $\mu$-universal (just use a suitable permutation of $(a_i)_{i<\k}$). 
\end{proof}

\begin{lemma}\label{apu}
Let $\A \in \K^*_{< \k}$, $\B \supseteq \A$, $b \in \B$, let $A \subset \A$, and suppose  
$I=(a_i)_{i<\l^{+}} \subseteq \A$ is a sequence quantifier free indiscernible over $A$.
Then, $\A$ realises $\t(b/A)$.
\end{lemma}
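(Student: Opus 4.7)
My plan is to apply Lemma \ref{pienityyppi} to the pair $(I,b)$ and then combine the weak saturation available in $\k$-sized extensions (Lemma \ref{weaksat}) with the universality statement of Lemma \ref{universal}, using the length of $I$ as a rigidity device. The long $A$-indiscernible sequence $I \subseteq \A$ should force the type $\t(b/A)$, which is trivially realised in a weakly saturated extension of $\A$, to be already realised inside $\A$ itself.

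Under the substantive assumption $\v A \v < \l^+$, Lemma \ref{pienityyppi} produces a quantifier free type $p(x,y)$ over $A$ with
$$\v\{i<\l^+ : \t(a_i b/A) \ne p\}\v < \l^+.$$
After thinning $I$ to a sub-sequence of length $\l^+$ (still quantifier free indiscernible over $A$ by the usual argument) and relabelling, I may assume $\t(a_i b/A)=p$ for every $i<\l^+$. Since $\B\in\K^*$ by the standing convention, I would then extend $\B$ to some $\B^*\in\K_{\ge\k}$ and take a $\k$-sized submodel $\A^*\in\K_\k$ of $\B^*$ containing $\A\cup\{b\}$; by Lemma \ref{weaksat}, $\A^*$ is weakly saturated, and applying Lemma \ref{pienityyppi} once more inside $\A^*$ shows that any realisation $b^*\in\A^*$ of $\t(b/A)$ interacts with $I$ via the same type $p$.

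The final step is to pull such a realisation from $\A^*$ down into $\A$. Here I would invoke Lemma \ref{universal}, according to which $\A$ is $\mu$-universal for each regular $\mu$ with $\l<\mu\le\v\A\v$, and combine this with the rigidity furnished by the uniqueness of $p$ and by the long sequence $I\subseteq\A$: the goal is to embed the small $\K^*$-model $\langle A\cup I\cup\{b^*\}\rangle \subseteq \A^*$ into $\A$ by a map that fixes $A$ pointwise, so that the image of $b^*$ serves as the required $b'\in\A$ with $\t(b'/A)=\t(b/A)$.

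The main obstacle is exactly this transfer step, since the embeddings supplied by Lemma \ref{universal} are not a priori over a set. The real content of the proof is to exploit the $A$-indiscernibility of the length-$\l^+$ sequence $I\subseteq\A$ together with the uniqueness of $p$ to anchor such an embedding over $A$, turning abstract universality into $A$-preserving universality in the presence of a sufficiently long indiscernible sequence.
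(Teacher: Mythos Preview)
Your proposal has a genuine gap at exactly the point you flag, and the detour through Lemma~\ref{pienityyppi} does not help close it. First, the assertion that ``any realisation $b^*\in\A^*$ of $\t(b/A)$ interacts with $I$ via the same type $p$'' is not a consequence of Lemma~\ref{pienityyppi}: that lemma only guarantees that each such $b^*$ has \emph{some} eventual type $p_{b^*}$ along $I$, and nothing forces $p_{b^*}=p$. Second, even granting a unique $p$, you give no mechanism for converting $\mu$-universality of $\A$ into an embedding over $A$; the $A$-indiscernibility of $I$ does not by itself anchor such a map, and when $\v\A\v=\l^+$ the model $\langle A\cup I\cup\{b^*\}\rangle$ already has size $\l^+$, so Lemma~\ref{universal} does not even supply an abstract embedding.

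The paper's argument bypasses both issues by a different, direct route. Instead of trying to embed a small model into $\A$, it extends the sequence $I$ itself: choose $a_i$ for $\l^+\le i<\k$ realising $Av(I/\langle \B,a_j\rangle_{j<i})$, so that $(a_i)_{i<\k}$ remains quantifier free indiscernible over $A$ and $\D=\langle A,a_i\rangle_{i<\k}$ has size $\k$. Since $b$ lies in $\langle \B,a_i\rangle_{i<\k}\supseteq\D$, weak saturation of $\D$ (Lemma~\ref{weaksat}) produces a realisation of $\t(b/A)$ inside $\D$, necessarily of the form $t(a_{i_0},\ldots,a_{i_n},a)$ with $a\in A$. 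Indiscernibility over $A$ then lets one replace $i_0,\ldots,i_n$ by indices below $\l^+$, landing the realisation in $\langle A\cup I\rangle\subseteq\A$. The idea you are missing is precisely this extension of $I$ by average types: it manufactures a $\k$-sized model whose elements are visibly terms in $A$ and the extended sequence, so indiscernibility over $A$ transports the realisation back into $\A$ without ever needing an $A$-fixing embedding.
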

 
\begin{proof}
We show that $\t(b/A)$ is realised already in $\langle A \cup I \rangle$.
Choose tuples $a_i$, for $\l^+ \le i < \k$, so that 
$$\t(a_i/ \langle \B, a_j \rangle_{j<i})=Av(I/\langle \B, a_j \rangle_{j<i}).$$
Denote $\D=\langle A, a_i \rangle_{i<\k}$
and $\D'= \langle \B, a_i \rangle_{i<\k}$.
Note that $\langle \A, a_i \rangle_{i<\k} \in \K^*$, and thus $\D \in \K^*$.
Since $b \in \D' \supseteq \D$, the model $\D$ realises $\t(b/A)$
by Lemma \ref{weaksat}.
Thus, there is some $L$-term $t$, some tuple $a \in A$ and some $i_0 < \ldots < i_n < \k$ such that $b=t(a_{i_0}, \ldots, a_{i_n}, a)$.
By Corollary \ref{avexists}, the sequence $(a_i)_{i<\k}$ is quantifier free indiscernible over $A$.
Thus, if $j_0< \ldots <j_n<\l^+$, then also $c=t(a_{j_0}, \ldots, a_{j_n})$ realises $\t(b/A)$. Clearly $c \in \langle A \cup I \rangle$.
\end{proof} 
 
\begin{lemma}\label{weaksat2}
If $\A \in \K_{\le \k}^*$, then $\A$ is weakly saturated.
\end{lemma}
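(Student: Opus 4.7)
The plan is to split on the size of $\A$ and reduce to tools already at hand. If $\v\A\v=\k$, then Lemma \ref{weaksat} gives the conclusion directly. So the real content is the case $\v\A\v<\k$, where I want to apply Lemma \ref{apu}: that lemma takes a model $\A\in\K^*_{<\k}$, an extension $\B\supseteq\A$ containing an element $b$, and a set $A\subset\A$ over which there is a quantifier free indiscernible sequence $I=(a_i)_{i<\l^+}\subseteq\A$, and concludes that $\t(b/A)$ is realised in $\A$.

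So the whole task reduces to producing such an indiscernible sequence inside $\A$ over any given $A\subseteq\A$ with $\v A\v<\v\A\v$. For this I would invoke Lemma \ref{delta}, which requires a regular cardinal $\mu$ with $\l<\mu$, $\v A\v<\mu$, and $\mu\le\v\A\v$. I would set $\mu=\max(\l^+,\v A\v^+)$. This is regular as a successor cardinal; it strictly exceeds $\l$ and $\v A\v$ by construction; and it is at most $\v\A\v$ because $\A\in\K^*$ forces $\v\A\v>\l$ (hence $\v\A\v\ge\l^+$), while $\v A\v<\v\A\v$ forces $\v A\v^+\le\v\A\v$. Lemma \ref{delta} then yields a quantifier free indiscernible sequence $(a_i)_{i<\mu}\subseteq\A$ over $A$, and its initial segment of length $\l^+$ is the desired $I$.

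Feeding $A$, $I$, $b$ and the extension $\A\subseteq\B$ into Lemma \ref{apu} shows that $\t(b/A)$ is realised in $\A$, which is exactly weak saturation. There is no real obstacle; the only point that requires a moment's care is checking the cardinal arithmetic so that $\mu$ sits in the window demanded by Lemma \ref{delta} and is still large enough ($\mu\ge\l^+$) to feed Lemma \ref{apu} after truncation. Everything else is bookkeeping among the two separately established ingredients: Lemma \ref{weaksat} covers size $\k$, while Lemmas \ref{delta} and \ref{apu} jointly cover the sizes strictly between $\l^+$ and $\k$.
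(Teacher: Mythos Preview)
Your proposal is correct and follows essentially the same route as the paper: both proofs obtain an indiscernible sequence over $A$ inside $\A$ via Lemma~\ref{delta} and then invoke Lemma~\ref{apu} to realise $\t(b/A)$. The only cosmetic difference is that you separate the case $\v\A\v=\k$ and cite Lemma~\ref{weaksat} directly, whereas the paper reduces to a successor-sized submodel when $\v\A\v$ is a limit cardinal and then proceeds uniformly; your handling of the $\k$-sized case is arguably a bit cleaner, since Lemma~\ref{apu} as stated requires $\A\in\K^*_{<\k}$.
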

 
 \begin{proof}
Let $A \subseteq \A$ and suppose $\v A \v < \v \A \v$, and let $b \in \C \supseteq \A$.
If $\v \A \v$ is a limit cardinal, there is some successor cardinal $\mu^+< \v \A \v$ such that $\v A \v<\mu^+$,
so we may without loss assume $\v \A \v=\mu^+$.
Considering how the class $\K^*$ is defined, we have $\mu^+ \ge \l^+$.
By Lemma \ref{delta}, there is a sequence $I=(a_i)_{i<\mu^+} \subseteq \A$ quantifier free indiscernible over $A$. 
By Lemma \ref{apu}, $\A$ realises $\t(b/A)$.
\end{proof}
  
 \begin{lemma}\label{AP}
 The class $\K^*_{<\k}$ has the amalgamation property (AP).
 \end{lemma}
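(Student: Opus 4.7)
The plan is to reduce amalgamation to the statement that whenever $\mathcal{M}\in\K^{*}_{\k}$ contains $\A$, every $\B\in\K^{*}_{<\k}$ with $\A\subseteq\B$ embeds over $\A$ into $\mathcal{M}$. Granting that reduction, the amalgamation property for $\A\subseteq\B_{1},\B_{2}$ in $\K^{*}_{<\k}$ follows at once: by the definition of $\K^{*}$, fix $\mathcal{M}\in\K^{*}_{\k}$ with $\B_{1}\subseteq\mathcal{M}$, then embed $\B_{2}$ over $\A$ into $\mathcal{M}$, so that $\mathcal{M}$, together with the inclusion of $\B_{1}$ and the embedding of $\B_{2}$, serves as the amalgam. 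Note that $\mathcal{M}$ is weakly saturated by Lemma \ref{weaksat2} and $\mu$-universal for every regular $\mu$ with $\l<\mu\le\k$ by Lemma \ref{universal}, so the setup has the right abstract properties.

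For the reduction, I would build the embedding $f\colon\B_{2}\to\mathcal{M}$ with $f\raj\A=\mathrm{id}_{\A}$ by transfinite induction along an enumeration $\B_{2}\setminus\A=(b_{i})_{i<\a}$ with $\a<\k$. Having defined $f$ on $A_{i}:=\A\cup\{b_{j}:j<i\}$ with image $A_{i}':=f(A_{i})\subseteq\mathcal{M}$, I need some $c\in\mathcal{M}$ realising $q_{i}:=f(\t(b_{i}/A_{i}))$ over $A_{i}'$ and set $f(b_{i}):=c$. The two workhorses are weak saturation of $\mathcal{M}$, which provides such a $c$ as soon as $q_{i}$ is realised in some $\K^{*}$-extension $\C\supseteq\mathcal{M}$, and the indiscernible machinery: Lemma \ref{delta} produces a quantifier free indiscernible $I_{i}\subseteq\mathcal{M}$ of some regular length $\mu_{i}$ with $\v A_{i}'\v<\mu_{i}\le\k$ over $A_{i}'$ (such $\mu_{i}$ exists because $\k$ is regular and $\v A_{i}\v<\k$), while Corollary \ref{avexists} guarantees that average types of $I_{i}$ are realised in $\K^{*}$-extensions of $\mathcal{M}$.

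The main obstacle is verifying that $q_{i}$ is realised in some $\K^{*}$-extension of $\mathcal{M}$: the abstract isomorphic copy $f(\langle A_{i},b_{i}\rangle)\cong\langle A_{i},b_{i}\rangle$ is a $\K$-structure containing $A_{i}'$ and witnessing $q_{i}$, but it is not a priori a submodel of any $\K^{*}$-extension of $\mathcal{M}$. To overcome this, the plan is to use $I_{i}$ to coordinatise the relevant part of $\mathcal{M}$ over $A_{i}'$ and then match $b_{i}$ to the realisation of a suitable average type: starting from $I_{i}$, iteratively invoke Corollary \ref{avexists} to produce a $\K^{*}$-extension of $\mathcal{M}$ in which a long extension of $I_{i}$ lives, and use the $\k$-categoricity of $\K$ (via an extension $\mathcal{M}_{2}\in\K^{*}_{\k}$ of $\B_{2}$ identified with such an average-type extension) to place $f(\langle A_{i},b_{i}\rangle)$ inside it. A Lemma \ref{apu}-style argument, recognising $q_{i}$ as determined by the average type of $I_{i}$ together with its behaviour on $A_{i}'$, then lets weak saturation pull the realisation back into $\mathcal{M}$. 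The delicate technical point is making the average-type construction respect $f\raj A_{i}$ so that $q_{i}$ itself, and not merely some companion type, is realised — this is where the regularity of $\k$ and careful cardinal book-keeping do the work.
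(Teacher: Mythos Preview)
Your reduction to ``embed $\B_2$ over $\A$ into a fixed $\mathcal{M}\in\K^*_\k$'' is sound, but the inductive step has exactly the gap you yourself flag, and your proposed fix does not close it. To invoke weak saturation of $\mathcal{M}$ for the type $q_i$ you must first realise $q_i$ in some $\K^*$-extension of $\mathcal{M}$; but that is precisely amalgamating $\mathcal{M}$ and an isomorphic copy of $\langle A_i, b_i\rangle$ over $A_i'$, which is the statement being proved. Appealing to $\k$-categoricity does not break the circle: it hands you an abstract isomorphism $\mathcal{M}\cong\mathcal{M}_2$, but nothing forces that isomorphism to agree with your partial map $f$ on $A_i$, and ``identifying $\mathcal{M}_2$ with an average-type extension of $\mathcal{M}$'' is a cardinality mismatch, since the average-type tower over $\mathcal{M}$ has size strictly greater than $\k$. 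A Lemma~\ref{apu}-style argument likewise presupposes that a realisation of $q_i$ already lives in an extension of the model carrying $I_i$; arranging that is again the missing amalgamation.

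The paper sidesteps this circularity by never trying to push one side into the other. It fixes a single indiscernible $I\subseteq\A$ of length $\l^+$ and grows two towers in parallel: at each stage realise $Av(I/-)$ over the current extension of $\B$ and over the current extension of $\C$. The two new elements have the same type over the part so far built above $\A$ (both restrict to the average type there), so after renaming they may be taken equal; after $\l^+$ steps one obtains a common submodel $\A'=\langle\A,b_i\rangle_{i<\l^+}$ lying inside both towers, with $(b_i)_{i<\l^+}$ indiscernible over $\A$ by Lemma~\ref{avind}. Now Lemma~\ref{apu}, applied inside the $\C$-tower (which contains $b$), produces a realisation of $\t(b/\A)$ already in $\A'$, hence in the $\B$-tower, which serves as the amalgam. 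The idea your sketch is missing is this simultaneous bridge-building via average types of an indiscernible that lives in the base $\A$, so that the two sides are glued along $\A'$ rather than one being embedded into the other.
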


\begin{proof}
Suppose $\A, \B, \C \in \K^*_{<\k}$, $\A \subseteq \B, \C$, and let $b \in \C \setminus \A$.
It suffices to find a model $\B^* \supseteq \B$ and an embedding $f: \langle \A, b \rangle \to \B^*$ such that $f \raj \A=id$.
By Lemma \ref{delta}, there is a sequence $I=(a_i)_{i<\l^+} \subseteq \A$, quantifier free indiscernible over $\emptyset$.

By Corollary \ref{avexists}, we may choose some $b_0$ and $c_0$ so that $\t(b_0/\B)=Av(I/\B)$ and $\t(c_0/\C)=Av(I/\C)$.
Since $\t(b_0/\A)=Av(I/\A)=\t(c_0/\A)$, we have
$\langle \A, b_0 \rangle \cong \langle \A, c_0 \rangle$, and thus we may assume (moving $\B$ if necessary) $b_0=c_0$ and $\langle \B, b_0 \rangle \cap \langle \C, b_0 \rangle = \langle \A, b_0 \rangle$.

By Remark \ref{avremark}, there are, for $i<\l^{+}$, $\B_i$ and $b_i$ such that $\t(b_i/ \B_i)=Av(I/\B_i)$ and $\B_i=\langle \B, b_j \rangle_{j<i}$.
Construct $c_i$ and $\C_i$ for $i<\l^+$ similarly.
Again, by moving $\B_i$ at each step if necessary, we may assume that for each $i$, $c_i=b_i$ 
and $\B_i \cap \C_i=\langle \A, b_j \rangle_{j<i}$.
 
Now, $\langle \A, b_i \rangle_{i<\l^+} \subseteq \langle \B, b_i \rangle_{i<\l^+}$, and $b \in \langle \C, b_i \rangle_{i<\l^+}$.
By Lemma \ref{avind}, the sequence $(b_i)_{i<\l^+}$ is indiscernible over $\A$.
Denote $\A'=\langle \A, b_i \rangle_{i<\l^+}$.
Now, $\A' \subseteq \bigcup_{i<\l^+}\C_i$, and thus by Lemma \ref{apu}, there is some $b' \in \A'$ such that $\t(b'/\A)=\t(b/\A)$.
Since $\A' \subseteq \bigcup_{i<\l^+}\B_i$, we may choose $\B^*=\bigcup_{i<\l^+}\B_i$, and we are done. 
\end{proof}

\begin{corollary}\label{saturated}
If $\A \in \K^*_{<\k}$ and $\v \A \v>\l^+$, then $\A$ is saturated.
\end{corollary}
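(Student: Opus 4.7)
The plan is to derive saturation as a short consequence of the amalgamation property (Lemma \ref{AP}) combined with weak saturation (Lemma \ref{weaksat2}). Suppose $\B,\C\in\K^{*}$ with $\C\subseteq\A$, $\C\subseteq\B$, $\v\C\v<\v\A\v$, and let $b\in\B$; the task is to realise $\t(b/\C)$ inside $\A$. The first step is to replace $\B$ with the small submodel $\D=\langle\C,b\rangle$. Since $\D\subseteq\B\in\K^{*}$, any witness $\E\in\K_{\ge\k}$ with $\B\subseteq\E$ also contains $\D$, and $\v\D\v\ge\v\C\v>\l$, so $\D\in\K^{*}$; moreover $\v\D\v=\v\C\v<\v\A\v<\k$, so $\A,\C,\D$ all lie in $\K^{*}_{<\k}$. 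Without loss of generality $b\notin\C$, since otherwise $b\in\C\subseteq\A$ already realises its own type.

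Next, I would apply Lemma \ref{AP} to the span $\A\supseteq\C\subseteq\D$ (with $b\in\D\setminus\C$) to produce an $\A^{*}\in\K^{*}$ with $\A\subseteq\A^{*}$ and an embedding $f\colon\D\to\A^{*}$ fixing $\C$ pointwise. Then $f(b)\in\A^{*}$ satisfies $\t(f(b)/\C)=\t(b/\C)$. Finally, by Lemma \ref{weaksat2} the model $\A\in\K^{*}_{\le\k}$ is weakly saturated, and since $\C\subseteq\A\subseteq\A^{*}\in\K^{*}$ with $\v\C\v<\v\A\v$, the type $\t(f(b)/\C)$ is realised by some element of $\A$, which is the required realisation of $\t(b/\C)$.

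The only non-routine step to verify is that $\langle\C,b\rangle$ genuinely sits in $\K^{*}_{<\k}$ so that amalgamation is applicable; this is immediate from the definition of $\K^{*}$ by transitivity of the inclusion $\langle\C,b\rangle\subseteq\B\subseteq\E$ into a model of size at least $\k$. The hypothesis $\v\A\v>\l^{+}$ plays no role in the argument itself: it is used only to guarantee that the statement is not vacuous, since otherwise $\v\C\v<\v\A\v=\l^{+}$ would force $\v\C\v\le\l$, contradicting $\C\in\K^{*}$. No further stability-theoretic or geometric input is required; the corollary is a direct composition of Lemmas \ref{AP} and \ref{weaksat2}.
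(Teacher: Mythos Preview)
Your proof is correct and follows essentially the same route as the paper: amalgamate over the small common submodel using Lemma \ref{AP}, then pull the realisation down into $\A$ via weak saturation (Lemma \ref{weaksat2}). The only difference is cosmetic---you explicitly replace the ambient model by $\langle \C,b\rangle$ and verify it lies in $\K^{*}_{<\k}$, whereas the paper simply asserts that one may assume the larger model has size $<\k$.
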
 

\begin{proof}
Let $\C \in \K^*$, and let $\B \subseteq \A, \C$ be such that $\v \B \v < \v \A \v$, and let $b \in \C$.
We may without loss assume $\v \C \v <\k$.
By Lemma \ref{AP}, there is some $\D \in \K^*$ such that $\A \subseteq \D$ and an embedding
$f: \C \to \D$ such that $f \upharpoonright \B =id$, 
and thus by Lemma \ref{weaksat2}, there is some $b' \in \A$ such that $\t(b'/\B)=\t(b/\B)$. 
\end{proof}

\begin{corollary}\label{saturated2}
If $\A \in \K_{\le \k}$, then $\A$ is saturated.
\end{corollary}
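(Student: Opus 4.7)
The plan is to reduce to Corollary \ref{saturated} when $|\A|<\k$ and to handle $|\A|=\k$ by a chain construction combined with the weak saturation given by Lemma \ref{weaksat2}. Since saturation is defined only for members of $\K^*$, we implicitly assume $\A\in\K^*$. If $|\A|=\l^+$ the condition is vacuous: any $\C\in\K^*$ has $|\C|>\l$, so $|\C|<|\A|=\l^+$ cannot hold. If $\l^+<|\A|<\k$, then $\A$ is saturated by Corollary \ref{saturated}. So only the case $|\A|=\k$ requires real argument.

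Fix $\A\in\K_\k$ and let $\C\subseteq\A$, $\C\subseteq\B$ with $\C,\B\in\K^*$, $|\C|<\k$, and $b\in\B$. Replacing $\B$ by $\langle\C,b\rangle$, we may assume $|\B|<\k$. Using the regularity of $\k$ together with Lemma \ref{peruslemma}, resolve $\A$ as a continuous increasing chain $\A=\bigcup_{i<\k}\A_i$ with $\A_i\in\K^*_{<\k}$ and $\C\subseteq\A_0$. The strategy is to build an extension $\D\supseteq\A$ that already contains $b$ over $\C$, and then pull the realisation back into $\A$ via weak saturation.

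We build a continuous increasing chain $(\D_i)_{i<\k}\subseteq\K^*_{<\k}$ by recursion. For $\D_0$, apply Lemma \ref{AP} to amalgamate $\A_0$ and $\B$ over $\C$ and identify $\B$ with its image, so that $\A_0\subseteq\D_0$ and $b\in\D_0$. At a successor stage $i+1$, apply Lemma \ref{AP} to amalgamate $\A_{i+1}$ and $\D_i$ over $\A_i$, and identify so that $\A_{i+1},\D_i\subseteq\D_{i+1}$; regularity of $\k$ keeps $|\D_{i+1}|<\k$. Take unions at limits. Setting $\D=\bigcup_{i<\k}\D_i$, by Lemma \ref{aec} we have $\D\in\K^*$, and by construction $\A\subseteq\D$ while $b\in\D_0\subseteq\D$. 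Applying Lemma \ref{weaksat2} to the inclusion $\C\subseteq\A\subseteq\D$ with $|\C|<|\A|$ yields some $b'\in\A$ with $\t(b'/\C)=\t(b/\C)$.

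The main obstacle is the bookkeeping at successor stages: one must verify that Lemma \ref{AP} can be realised via honest inclusions $\A_{i+1},\D_i\subseteq\D_{i+1}$ rather than merely via embeddings. This is the standard pushout iteration where the freedom to replace $\D_{i+1}$ by an isomorphic copy is used essentially, and checking that cardinalities stay below $\k$ throughout relies on the regularity of $\k$.
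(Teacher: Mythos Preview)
Your argument is correct and takes a genuinely different route from the paper. For $|\A|=\k$ the paper invokes $\k$-categoricity explicitly: it constructs one saturated model of size $\k$ via a type-realising chain (each $\A_{i+1}$ realises all consistent types over $\A_i$, using Lemma~\ref{stab} and Lemma~\ref{AP}), and then categoricity makes every model of size $\k$ isomorphic to this one, hence saturated. You instead work with the given $\A$ directly: you resolve $\A$ as a chain, build an extension $\D\supseteq\A$ realising $\t(b/\C)$ by iterated amalgamation along that resolution, and then pull the realisation back into $\A$ via weak saturation (Lemma~\ref{weaksat2}). Your approach hides categoricity inside the appeal to weak saturation, whereas the paper's makes it explicit; the paper's construction is global (one saturated model once and for all), yours is local (one amalgamation tower per type). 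You are also a bit more careful than the paper in noting that the case $|\A|=\l^{+}$ is vacuous, since Corollary~\ref{saturated} literally only covers $|\A|>\l^{+}$.

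One remark on the bookkeeping you flag: arranging \emph{both} $\A_{i+1}\subseteq\D_{i+1}$ and $\D_i\subseteq\D_{i+1}$ as honest inclusions amounts to disjoint amalgamation, which has not been established for $\K^*_{<\k}$, and renaming $\D_{i+1}$ alone only buys you one of the two inclusions. The clean fix is to keep $(\D_i)_{i<\k}$ as an honest increasing chain and carry a coherent system of embeddings $g_i:\A_i\to\D_i$ with $g_0=\mathrm{id}_{\A_0}$ and $g_{i+1}\raj\A_i=g_i$; at the end $g=\bigcup_i g_i:\A\to\D$ is an embedding fixing $\C$ pointwise, and one applies Lemma~\ref{weaksat2} to $g(\A)\subseteq\D$ and transports back along $g^{-1}$. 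With this adjustment your proof goes through.
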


\begin{proof}
If $\v \A \v < \k$, then the result follows from Corollary \ref{saturated}.
The class $\K$ is $\k$-categorical, so to prove the corollary for models of size $\k$, it suffices to construct a saturated model of size $\k$.
Build a chain of models $\A_i \in \K^*$, $i<\k$, such that $\v \A_i \v < \k$, as follows.
Let $\A_0$ be any model of size $\l^+$.
By Lemmas \ref{stab} and \ref{AP}, there are less than $\k$ many consistent types over the model $\A_i$,
and using Lemma \ref{AP}, we can construct a model $\A_{i+1}$ that satisfies all of them.
At limit steps, take unions.
 
Now, the model $\A=\bigcup_{i<\k} \A_i$ is of size $\k$, and we claim it is saturated.
Indeed, let $\B \subseteq \A$ be such that $\v \B \v < \k$ and let $\C \in \K^*$ be such that $\B \subseteq \C$
and let $a \in \C$.
We need to realise $\t(a/\B)$ in $\A$.
We may without loss assume $\C=\langle \B, a \rangle$ and in particular $\v \C \v < \k$.
There is some $i<\k$ such that $\B \subseteq \A_i$.
By Lemma \ref{AP}, there is some model $\C'$ such that $\A_i \subseteq \C'$ and $\C$ can be embedded into $\C'$.
Since $\C'$ realises $\t(a/\A_i)$, this type (and hence also $\t(a/\B)$) is realised by construction in $\A_{i+1} \subseteq \A$. 
 \end{proof}

\begin{corollary}\label{cate0}
The class $\K^*$ is $\mu$-categorical whenever $\l^+<\mu \le \k$.
\end{corollary}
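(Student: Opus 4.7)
The plan is to split into the two cases $\mu=\k$ and $\l^+<\mu<\k$ and handle each separately. The first case is essentially free: every model $\A\in\K^*_\k$ lies in $\K_\k$, and conversely every $\A\in\K_\k$ is in $\K^*_\k$ since $\A\subseteq\A\in\K_{\ge\k}$ and $\k>\l$. So $\K^*_\k=\K_\k$ and $\mu$-categoricity in $\k$ follows directly from the $\k$-categoricity of $\K$.

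The main work is the case $\l^+<\mu<\k$. Let $\A,\B\in\K^*_\mu$. By Corollary \ref{saturated}, both $\A$ and $\B$ are saturated. I would then run the standard back-and-forth: enumerate $\A=\{a_\alpha\,|\,\alpha<\mu\}$ and $\B=\{b_\alpha\,|\,\alpha<\mu\}$, and build a continuous increasing chain of partial isomorphisms $f_\alpha:\A_\alpha\to\B_\alpha$ with $\A_\alpha\subseteq\A$, $\B_\alpha\subseteq\B$, $|\A_\alpha|<\mu$, alternating at successor steps between covering the next $a_\alpha$ and the next $b_\alpha$.

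To cover a new element $a\in\A$: note that $\langle\A_\alpha,a\rangle\in\K^*_{<\k}$ (by Lemma \ref{peruslemma} or directly since $\A\in\K^*$), and $f_\alpha$ identifies $\A_\alpha$ with $\B_\alpha\subseteq\B$. Apply the amalgamation property of $\K^*_{<\k}$ (Lemma \ref{AP}) to amalgamate $\langle\A_\alpha,a\rangle$ and $\B$ over the identification, obtaining some $\C\in\K^*_{<\k}$ with $\B\subseteq\C$ and an embedding $g:\langle\A_\alpha,a\rangle\to\C$ extending $f_\alpha$. Then $g(a)\in\C$ realises $f_\alpha(\t(a/\A_\alpha))$ over $\B_\alpha$, so by saturation of $\B$ (applied to $\B_\alpha\subseteq\B,\C$, $|\B_\alpha|<\mu=|\B|$) there is $b\in\B$ realising the same type over $\B_\alpha$, and we extend $f_\alpha$ by $a\mapsto b$. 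The symmetric step for $b_\alpha$ is identical, using saturation of $\A$. Take unions at limits; continuity is preserved because the domains and ranges stay below $\mu$ in cardinality.

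There is no real obstacle; the only subtle points are (a) checking that the amalgam produced by Lemma \ref{AP} really is in $\K^*_{<\k}$ so that Corollary \ref{saturated}'s saturation applies to types realised in it, and (b) making sure that at each stage the partial isomorphism's domain has size strictly less than $\mu$ so that the saturation hypothesis $|\B_\alpha|<|\B|$ is met. Both are automatic from the construction, since at stage $\alpha<\mu$ the domain has size at most $|\alpha|+\l^+<\mu$.
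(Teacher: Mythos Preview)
Your argument is essentially the paper's (back-and-forth using Corollary~\ref{saturated}), but you have not said how the back-and-forth gets off the ground, and as written the first step fails. Both the amalgamation property (Lemma~\ref{AP}) and the notion of saturation (Definition~\ref{satdef}) are formulated only for models in $\K^*$, and membership in $\K^*$ requires cardinality $>\l$. If you start from the empty partial map, then at stage $0$ the base $\A_0$ is empty (or has size $\le\l$), so $\A_0\notin\K^*$, $\langle\A_0,a\rangle\notin\K^*$, and you cannot invoke either Lemma~\ref{AP} or the saturation of $\B$. Your own size estimate ``at most $|\alpha|+\l^+$'' hints that you intend the domain to contain something of size $\l^+$ from the outset, but nothing in the proof produces it.

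The paper handles exactly this point: it first picks a submodel $\A_0\subseteq\A$ with $|\A_0|=\l^+$ and uses Lemma~\ref{universal} (universality) to find an isomorphic copy of $\A_0$ inside $\B$; the back-and-forth then starts from this common $\l^+$-sized submodel, so that at every stage the base is a model in $\K^*_{<\mu}$ and saturation applies directly. Once you add this seeding step, your argument coincides with the paper's (your explicit appeal to AP is harmless but unnecessary: given the seed, saturation in the sense of Definition~\ref{satdef} already lets you realise $f_\alpha(\t(a/\A_\alpha))$ in $\B$ by taking an abstract isomorphic copy of $\langle\A_\alpha,a\rangle$ extending $\B_\alpha$).
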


\begin{proof}
By our assumptions, the class $\K^*$ is $\k$-categorical, so we may assume $\mu<\k$.
Let $\B, \B' \in \K^*_{\mu}$,
and let $\A \subseteq \B$ be such that $\v \A \v= \l^+$.
By Lemma \ref{universal},  
$\B'$ contains an isomorphic copy of $\A$.
We can use Corollary \ref{saturated}
to prove $\B \cong \B'$ with the usual back and forth construction.
\end{proof}

\begin{definition}\label{minimal}
Let $\A \in \K^*$.
We say that a type $p \in S_{qf}(\A)$ is \emph{minimal} if 
\begin{itemize}
\item 
there is some
$\A' \in \K^*$ such that $\A \subseteq \A'$
and some $a \in \A' \setminus \A$ such that $\t(a/\A)=p$; and
\item whenever $\A \subseteq \B \subseteq \C$, $a,b \in \C \setminus \B$ and $\t(a/\A)=\t(b/\A)=p$, then $\t(a/\B)=\t(b/\B)$. 
\end{itemize}
 \end{definition} 
  
\begin{lemma}\label{uniqext}
There is a model $\A \in \K^*_{\l^+}$ and a type $p \in S_{qf}(\A)$ such that $p$ is minimal. 
\end{lemma}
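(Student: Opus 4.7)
The plan is to argue by contradiction via the usual tree construction from stability theory. Suppose no type over any $\A \in \K^*_{\l^+}$ is minimal; I aim to derive a contradiction with Lemma \ref{stab} by producing $2^{\l^+}$ pairwise distinct consistent quantifier-free types over a model of cardinality strictly less than $2^{\l^+}$.

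First I fix any $\A_0 \in \K^*_{\l^+}$ (guaranteed by Lemma \ref{aec}) together with a consistent type $p_{\emptyset} \in S_{qf}(\A_0)$: for instance, the quantifier-free type of any element of $\B \setminus \A_0$ where $\B \in \K_{\ge\k}$ extends $\A_0$ (such $\B$ exists by the definition of $\K^*$). Then by induction on $i<\l^+$, I construct a continuous increasing chain of models $(\A_i)$ in $\K^*$ together with, for each $\eta \in 2^i$, a consistent type $p_{\eta}\in S_{qf}(\A_i)$, satisfying $p_{\eta}\subseteq p_{\nu}$ whenever $\eta\subseteq\nu$ and $p_{\eta^\frown 0}\neq p_{\eta^\frown 1}$ as types over $\A_{i+1}$. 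At a successor stage $i+1$, for each $\eta\in 2^i$ the assumed non-minimality of $p_{\eta}$ supplies some $\B_{\eta}\supseteq\A_i$ in $\K^*$ over which $p_{\eta}$ has two incompatible extensions; amalgamating all the $\B_{\eta}$ over $\A_i$ via AP (Lemma \ref{AP}) produces $\A_{i+1}$. Limit stages are handled by unions, which remain in $\K^*$ by Lemma \ref{aec}.

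At stage $\l^+$ I set $\A_{\l^+}=\bigcup_{i<\l^+}\A_i\in\K^*$. Each branch $\eta\in 2^{\l^+}$ yields a consistent type $\bar p_{\eta}\in S_{qf}(\A_{\l^+})$, and distinct branches yield distinct types, giving $2^{\l^+}$ pairwise distinct consistent types over $\A_{\l^+}$. Embedding $\A_{\l^+}$ into some $\A^*\in\K_\k$ (which exists since $\A_{\l^+}\in\K^*$, provided $|\A_{\l^+}|\le\k$) and applying weak saturation of $\A^*$ (Lemma \ref{weaksat}), every $\bar p_{\eta}$ is realized in $\A^*$. Lemma \ref{stab} then bounds the number of such realized types by $|\A_{\l^+}|+\l$, while K\"onig's theorem forces $|\A_{\l^+}|<2^{\l^+}$ (since $\A_{\l^+}$ is a union of $\l^+$ many models each of cardinality $<2^{\l^+}$), yielding the desired contradiction.

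The main obstacle I anticipate is the cardinal bookkeeping during the construction: the amalgamation of $2^i$ splitting witnesses at stage $i+1$ a priori grows $|\A_{i+1}|$ well beyond $\l^+$, potentially up to $2^{<\l^+}$ in the limit. Confirming $|\A_{\l^+}|<2^{\l^+}$ is delivered by K\"onig, but one must also ensure $|\A_{\l^+}|\le\k$ in order to apply Lemmas \ref{stab} and \ref{weaksat}; should this threaten to fail, one truncates the argument at the smallest intermediate stage $i<\l^+$ at which $2^i$ already exceeds $|\A_i|+\l$ and obtains the stability violation there.
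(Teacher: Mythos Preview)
Your strategy matches the paper's --- amalgamation (Lemma~\ref{AP}) plus a tree of splitting types to violate stability (Lemma~\ref{stab}) --- but there is a real gap in your implementation. The negation of the lemma says only that no type over a model in $\K^*_{\l^+}$ is minimal; that is, non-minimality is guaranteed only for types whose domain has size exactly $\l^+$. You explicitly let $|\A_i|$ outgrow $\l^+$ (and devote your last paragraph to controlling this growth), yet at the successor step you invoke ``the assumed non-minimality of $p_\eta$'' to split it over some $\B_\eta\supseteq\A_i$. Once $|\A_i|>\l^+$, nothing you have assumed rules out $p_\eta$ being minimal over $\A_i$, so the tree may simply get stuck --- and discovering a minimal type over a large $\A_i$ would not prove the lemma either, which demands $\A\in\K^*_{\l^+}$.

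The remedy, implicit in the paper's phrase ``over a set of size $\l^+$,'' is to keep every $\A_i$ of size $\l^+$. Each splitting witness may be taken of the form $\langle\A_i,c_\eta\rangle$ for a finite tuple $c_\eta$, so while $2^{|i|}\le\l^+$ the amalgam of all the stage-$i$ witnesses still has cardinality $\l^+$; the negated hypothesis then legitimately applies at every stage. The first level at which the number of branches exceeds $\l^+$ already yields more than $\l^+$ types over a model of size $\l^+$, each realized in a $\K_\k$-extension by Corollary~\ref{saturated2}, contradicting Lemma~\ref{stab}; your K\"onig/truncation machinery becomes unnecessary. A secondary point: your appeal to Lemma~\ref{weaksat} to realize the limit types $\bar p_\eta$ in $\A^*$ presupposes that each $\bar p_\eta$ is already realized in some $\K^*$-extension --- since the relevant level is necessarily a limit level, this still needs a short argument (e.g.\ first-order compactness plus closure of $\K$ under directed unions, followed by Lemma~\ref{peruslemma}).
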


\begin{proof}
By Lemma \ref{AP}, we have amalgamation.
Thus, if the statement of the lemma does not hold, 
 we can do the usual tree construction to get more than $\l^+$ many types over a set of size $\l^+$ which contradicts Lemma \ref{stab}.
\end{proof}

Fix now a model $\A$ and a type $p \in S(\A)$ that are as in the statement of Lemma \ref{uniqext}.

\begin{lemma}\label{models}
There are models $\B_i \in \K^*_{\le \l^{++}}$, for $i<\l^{++}$, such that $\B_0=\A$, $\B_i \subseteq \B_j$ whenever $i<j$,
and some $b_i \in \B_{i+1} \setminus \B_i$ such that $\t(b_i/\A)=p$ and $\B_{i+1}=\langle \B_i, b_i \rangle$.

Furthermore, any infinite sequence of tuples $b_i$ satisfying the above requirements is indiscernible over $\A$.
\end{lemma}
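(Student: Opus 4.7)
I plan to construct the chain $(\B_i)_{i<\l^{++}}$ by transfinite recursion with $\B_0 = \A$. At limit ordinals $i$, I set $\B_i = \bigcup_{j<i}\B_j$, which lies in $\K^*$ by Lemma \ref{aec} and has cardinality at most $\l^{++}$. At a successor stage $i+1$, I want to find a tuple $b_i \notin \B_i$ with $\t(b_i/\A) = p$ in some extension of $\B_i$ in $\K^*$, and then set $\B_{i+1} = \langle \B_i, b_i \rangle$; since $b_i$ is a finite tuple, the cardinality bound $\v \B_{i+1} \v \le \l^{++}$ is preserved along successor steps, and cofinalities take care of the limit case.

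The nontrivial point is producing a realisation of $p$ outside $\B_i$ at each successor stage. The first clause of Definition \ref{minimal} supplies $\A' \in \K^*$ with $\A \subseteq \A'$ and some $a \in \A' \setminus \A$ realising $p$. Since $\v \B_i \v, \v \A' \v < \k$, Lemma \ref{AP} produces $\D \in \K^*_{<\k}$ with $\B_i \subseteq \D$ and an embedding $g: \A' \to \D$ fixing $\A$ pointwise. If $g(a) \notin \B_i$, I take $b_i := g(a)$. In the alternative case $g(a) \in \B_i$, I argue that $\B_i$ cannot trap every realisation of $p$: by Corollary \ref{saturated2} there is a saturated model in $\K_\k$ extending $\B_i$, and iterating amalgamation to build many realisations of $p$ and realising them by saturation would, if they all landed in $\B_i$, force $\v \B_i \v \ge \k$, contradicting $\v \B_i \v \le \l^{++} < \k$. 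Hence some realisation lies outside $\B_i$ and I take it as $b_i$.

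For the furthermore statement, let $(b_i)_{i<\a}$ with $\a$ infinite be any sequence satisfying the listed requirements. The second clause of Definition \ref{minimal} tells me that $\t(b_i/\B_i)$ is the unique quantifier free extension of $p$ to $\B_i$ realised outside $\B_i$. I will show by induction on $n$ that for all $i_0 < i_1 < \cdots < i_n$ the type $\t(b_{i_0}, \ldots, b_{i_n}/\A)$ depends only on $n$. The base case is $\t(b_{i_0}/\A) = p$. For the induction step, the inductive hypothesis pins down $\t(b_{i_0}, \ldots, b_{i_{n-1}}/\A)$, and minimality pins down $\t(b_{i_n}/\langle \A, b_{i_0}, \ldots, b_{i_{n-1}}\rangle)$ as the unique nontrivial extension of $p$, since $\langle \A, b_{i_0}, \ldots, b_{i_{n-1}}\rangle \subseteq \B_{i_n}$ and $b_{i_n} \notin \B_{i_n}$ by construction. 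This yields quantifier free order indiscernibility of the sequence over $\A$. Since $\v \A \v = \l^+ < \k$ and the sequence lies in a model of $\K^*_{<\k}$, Lemma \ref{ordind} upgrades order indiscernibility to full quantifier free indiscernibility over $\A$.

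The main obstacle is the successor-stage existence argument: one must rule out the pathological scenario in which $\B_i$ contains every realisation of $p$ found in any extension of $\B_i$. Once that is handled via amalgamation plus saturation as sketched above, the rest of the construction is routine and the indiscernibility claim follows cleanly from the uniqueness clause of Definition \ref{minimal} together with Lemma \ref{ordind}.
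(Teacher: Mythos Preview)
Your argument for the ``furthermore'' clause is fine: the direct induction on $n$ using the uniqueness half of Definition~\ref{minimal} gives order indiscernibility over $\A$, and Lemma~\ref{ordind} upgrades it. The paper defers this to Corollary~\ref{furthermore} (via non-splitting, Lemma~\ref{dnsplit}); your version is more self-contained.

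The genuine gap is in your successor-stage existence argument. When $g(a)\in\B_i$, you sketch a contradiction: ``iterating amalgamation to build many realisations of $p$ and realising them by saturation would, if they all landed in $\B_i$, force $\v\B_i\v\ge\k$.'' This does not work. Lemma~\ref{AP} gives you no control over whether the embedded realisation $g(a)$ is a \emph{new} element or coincides with one already present; repeating the amalgamation step may send every fresh copy of $a$ to the same point of $\B_i$, so you never produce more than one realisation. You have no independent lower bound on $\v p(\M)\v$ in the saturated $\M\in\K_\k$, and none of the lemmas available at this point supplies one. Indeed, the assertion that $p$ always has a realisation outside any given $\B\supseteq\A$ is exactly Corollary~\ref{exists}, which the paper deduces \emph{from} the present lemma, not before it.

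The paper's route to existence is where the real work lies, and it is quite different from yours. One fixes a single realisation $b$ of $p$ with $b\notin\A$, sets $\C=\langle\A,b\rangle$, and enlarges $\C$ to size $\l^{+++}$ by repeatedly adjoining realisations $c_i$ of the average type $Av(I/\cdot)$ of a fixed indiscernible $I\subseteq\A$; put $\D=\langle\A,c_i\rangle_{i<\l^{+++}}$. The key step is showing $b\notin\D$: if $b=t(a,c_{i_0},\ldots,c_{i_n})$ with $a\in\A$, one uses Lemma~\ref{delta0} to extract $J\subseteq I$ indiscernible over $ab$, observes that $J^\frown(c_i)$ remains indiscernible over $ab$, and replaces the $c_{i_k}$ by elements of $J\subseteq\A$ to force $b\in\A$, a contradiction. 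Once $b\notin\D$ is secured, the chain $(\B_i)_{i<\l^{++}}$ is built entirely inside $\D$: weak saturation of $\D$ (Lemma~\ref{weaksat2}) realises $\t(b/\B_i)$ by some $b_i\in\D$, and $b_i\notin\B_i$ is automatic since $b\notin\B_i$ and quantifier-free types over $\B_i$ separate points of $\B_i$.
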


\begin{proof}
By Lemma \ref{delta}, there is a sequence $I=(a_i)_{i<\l^+} \subseteq \A$ quantifier free indiscernible over $\emptyset$.
Let $b$ be a tuple from some extension of $\A$ such that $\t(b/\A)=p$, and let $\C=\langle \A, b \rangle$.
For $i<\l^{+++}$, construct models $\C_i$ and choose tuples $c_i$ so that $\C_0=\C$,  $\t(c_i/\C_i)=Av(I/\C_i)$, $\C_{i+1}=\langle \C_i, c_i \rangle$, and if $i$ is a limit ordinal, then $\C_i=\bigcup_{j<i} \C_j$.
Denote $\D=\langle \A, c_i \rangle_{i<\l^{+++}}$.

We note first that $b \notin \D$.
Indeed, if we had $b \in \D$, then we would have $b=t(a, c_{i_0}, \ldots, c_{i_n})$ for some $L$-term $t$, some tuple $a \in \A$, and some $i_0< \ldots< i_n < \l^{+++}$.
By Lemma \ref{delta0}, there is some $J \subseteq I$ of length $\l^+$ such that $J$ is indiscernible over $ab$.
Since the sequences $J$ and $I$ have the same average types, 
each $c_i$ realises the type $Av(J/ab \cup \{c_j\}_{j<i} \cup J)$,
and thus the sequence $J \cup \{c_i\}_{i<\l^{+++}}$ is indiscernible over $ab$ by Lemma \ref{avind}.
Hence, we can find $j_0, \ldots, j_n \in J$ such that $b=t(a, a_{j_0}, \ldots, a_{j_n}) \in \A$, a contradiction.

We will now construct models $\B_i$ for $i<\l^{++}$ so that for each $i$, $\B_i \subseteq \D$ and $\v \B_i \v < \v \D \v$.
Let $\B_0=\A$. 
If we have defined $\B_i$, there is by lemma \ref{weaksat2}, some $b_i \in \D$ such that $\t(b_i/\B_i)=p$.
Let $\B_{i+1}=\langle \B_i, b_i \rangle$.
If $i$ is a limit ordinal, let $\B_i=\bigcup_{j<i} \B_j$.
\end{proof}

\begin{corollary}\label{exists}
If $\A \subseteq \B \in \K^*$,
then there is some $\C \in \K^*$
such that $\B \subseteq \C$, and there is some $a \in \C \setminus \B$ such that $\t(a/\A)=p$.
\end{corollary}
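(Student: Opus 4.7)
The plan is to find, in some model close to $\B$, an indiscernible (over $\A$) sequence $J=(c_i)_{i<\l^{+}}$ of pairwise distinct realisations of $p$, and then either pick $a=c_i$ for some $c_i\notin\B$, or, if the whole sequence lies inside $\B$, apply Corollary \ref{avexists} to the average type $Av(J/\B)$ to produce $a$ in an extension $\C\supseteq\B$.

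Concretely, start from the model $\D=\langle\A,d_i:i<\l^{+}\rangle\in\K^{*}$ of size $\l^{+}$ supplied by a truncation of Lemma \ref{models}: its $d_i$ are pairwise distinct realisations of $p$ and form an indiscernible sequence over $\A$. This $\D$ must be transported into a model reachable from $\B$, and here the argument splits on the size of $\B$. If $|\B|<\k$, the amalgamation property for $\K^{*}_{<\k}$ (Lemma \ref{AP}) amalgamates $\B$ and $\D$ over $\A$, producing $\C_0\in\K^{*}$ with $\B\subseteq\C_0$ together with an embedding $f\colon\D\to\C_0$ fixing $\A$; set $c_i=f(d_i)$. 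If $|\B|\ge\k$, the L\"owenheim--Skolem property of $\K$ gives $\A^{*}\in\K_\k$ with $\A\subseteq\A^{*}\subseteq\B$, and $\A^{*}$ is saturated by Corollary \ref{saturated2}; saturation then embeds $\D$ into $\A^{*}$ over $\A$ by realising $\t(d_i/\A\cup\{f(d_j):j<i\})$ in $\A^{*}$ one $i$ at a time, so $J=(c_i)_{i<\l^{+}}$ lies inside $\A^{*}\subseteq\B$. (In the first sub-case we may stop immediately if some $c_i$ lies outside $\B$.)

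Assume now $J\subseteq\B$. Corollary \ref{avexists} yields $\C\in\K^{*}$ with $\B\subseteq\C$ and $b\in\C$ satisfying $\t(b/\B)=Av(J/\B)$; take $a=b$. The restriction of this type to $\A$ is $Av(J/\A)$, which by indiscernibility of $J$ over $\A$ coincides with the common type $\t(c_0/\A)=p$, so $\t(a/\A)=p$. If $a$ were in $\B$, the defining clause of $Av(J/\B)$ applied with $a$ itself as parameter would force $\t(c_i,a/\emptyset)=\t(a,a/\emptyset)$ for cofinally many $i$, i.e.\ $c_i=a$ for cofinally many $i$, contradicting the pairwise distinctness of the $c_i$; hence $a\in\C\setminus\B$, as required.

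The main obstacle is the case $|\B|\ge\k$, since Lemma \ref{AP} provides amalgamation only inside $\K^{*}_{<\k}$ and so we cannot simply amalgamate $\D$ and $\B$ over $\A$ as in the small case. The remedy is the observation that $\B$ then already contains a saturated submodel of size $\k$ over $\A$, into which $\D$ can be embedded, so $J$ can be produced inside $\B$ itself. Once $J$ is located --- whether inside $\B$ or inside an extension of $\B$ --- the rest of the argument is uniform and uses only the average-type machinery of Corollary \ref{avexists}.
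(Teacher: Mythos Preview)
Your proof is correct and follows essentially the same strategy as the paper's: transport the indiscernible sequence of realisations of $p$ from Lemma~\ref{models} into (or over) $\B$ using either amalgamation or saturation, then apply Corollary~\ref{avexists} to the average type to produce $a$. The only difference is the case split: the paper splits at $\lvert\B\rvert=\l^{+}$ versus $\lvert\B\rvert\ge\l^{++}$ (using AP in the small case and saturation of $\B$ or a $\k$-sized submodel in the large case), whereas you split at $\lvert\B\rvert<\k$ versus $\lvert\B\rvert\ge\k$ (using AP throughout the small range and saturation only for the large one). Your split is arguably the more natural one, since it matches exactly the range where Lemma~\ref{AP} applies, and your explicit argument that $a\notin\B$ (via the equality formula in the average type) fills in a step the paper leaves implicit.
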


\begin{proof}
Let the models $\B_i$, $i < \l^{++}$, be as in the statement of Lemma \ref{models},
and denote $\B'=\bigcup_{i<\l^{++}} \B_i$.
Suppose first $\v \B \v \ge \l^{++}$.
If $\v \B \v \le \k$,
then we use Corollary \ref{saturated} to construct an embedding $f: \B' \to \B$ such that $f\raj \A=id$,
and if $\v \B \v > \k$, then we embed $\B'$ similarly to some submodel of $\B^* \subseteq \B$
such that $\A \subseteq \B^*$ and $\v \B^* \v=\k$.
Thus, there is an indiscernible sequence $(b_i)_{i<\l^{++}} \subseteq \B$ such that $\t(b_i/\A)=p$
for each $i < \l^{++}$.
By Corollary \ref{avexists}, there is some $\C \supseteq \B$ and some $a \in \C$
such that $\t(a/\B)=Av(I/\B)$, and now $a$ and $\C$ are as wanted.

Suppose now $\v \B \v =\l^+$, and denote $\B''=\langle \A, b_i \rangle_{i<\l^+}$.
Using Lemma \ref{AP}, we amalgamate the models $\B$ and $\B''$ over $\A$
to obtain a model $\C' \supseteq \B$ and an indiscernible sequence
$J=(b_i')_{i<\l^+} \subseteq \C'$ such that $\t(b_i'/\A)=p$ for each $i<\l^+$.
Using Corollary \ref{avexists}, we find a model $\C \supseteq \C'$
and some $a \in \C$ such that $\t(a/\C')=Av(J/\C')$.
These are as wanted.
\end{proof}

\begin{definition}
Let $\B$ and $\C$ be models and $A$ a set such that $A \subseteq \B \subseteq \C$,
and let $a \in \C$.
We say that the type $\t(a/\B)$ \emph{splits} over $A$ if there are some $b, c \in \B$ such that 
$\t(b/A)=\t(c/A)$ but $\t(ab/A) \neq \t(ac/A)$.
\end{definition}

\begin{lemma}\label{dnsplit}
Suppose $\B, \C \in \K^*_{\le \k}$, $\A \subseteq \B \subseteq \C$, and $a \in \C \setminus \B$ is such that $\t(a/\A)=p$.
Then, $\t(a/\B)$ does not split over $\A$.
\end{lemma}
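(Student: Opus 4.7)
The plan is to argue by contradiction: suppose $\t(a/\B)$ splits over $\A$, witnessed by $b,c \in \B$ with $\t(b/\A)=\t(c/\A)$ but $\t(ab/\A)\neq \t(ac/\A)$. The underlying idea is to exhibit two realisations of $p$ over $\langle \A, c\rangle$ whose quantifier free types pair differently with $c$, and then invoke the uniqueness clause in the definition of minimality to derive a contradiction.

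First I would note that by Lemma \ref{peruslemma} each of $\langle \A,b\rangle$, $\langle \A,c\rangle$, $\langle \A,b,a\rangle$, and $\langle \A,c,a\rangle$ lies in $\K^*_{\l^+}$, being a submodel of $\C$ of size $\l^+$ containing $\A$. The assumption $\t(b/\A)=\t(c/\A)$ gives an isomorphism $f:\langle \A,b\rangle \to \langle \A,c\rangle$ fixing $\A$ with $f(b)=c$. I would then extend $f$ to an isomorphism $\phi:\langle \A,b,a\rangle \to \langle \A,c,a'\rangle$ by building the target as a relabelled copy of $\langle \A,b,a\rangle$ along $f$; since $\K$ is closed under isomorphism, $\langle \A,c,a'\rangle \in \K^*_{\l^+}$, the element $a' \notin \langle \A,c\rangle$ realises $p$ over $\A$, and $\t(a'c/\A)=\t(\phi(a)\phi(b)/\A)=\t(ab/\A)$ by construction.

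Next I would apply Lemma \ref{AP}: since $\langle \A,c\rangle \subseteq \langle \A,c,a\rangle, \langle \A,c,a'\rangle$ all lie in $\K^*_{\l^+} \subseteq \K^*_{<\k}$, there is some $\D \in \K^*_{<\k}$ together with embeddings $\iota_1:\langle \A,c,a\rangle \to \D$ and $\iota_2:\langle \A,c,a'\rangle \to \D$ that fix $\langle \A,c\rangle$ pointwise. Writing $a_1=\iota_1(a)$ and $a_2=\iota_2(a')$, both lie in $\D\setminus \langle \A,c\rangle$ and both realise $p$ over $\A$. Minimality of $p$, applied to the chain $\A \subseteq \langle \A,c\rangle \subseteq \D$, now forces $\t(a_1/\langle \A,c\rangle)=\t(a_2/\langle \A,c\rangle)$, and projecting onto the relevant coordinates yields $\t(ac/\A)=\t(a_1c/\A)=\t(a_2c/\A)=\t(ab/\A)$, contradicting the splitting hypothesis.

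The main point requiring care, though not a deep obstacle, is to perform all the amalgamations at cardinality $\l^+$ rather than at the level of $\C$ itself: the hypothesis only supplies $\C \in \K^*_{\le\k}$, whereas Lemma \ref{AP} provides amalgamation only in $\K^*_{<\k}$. Working with the finitely generated extensions $\langle \A,c,a\rangle$ and its copy $\langle \A,c,a'\rangle$ rather than $\C$ itself neatly sidesteps this, and the rest of the argument is essentially bookkeeping with Lemma \ref{peruslemma} to certify that each intermediate structure stays inside $\K^*$.
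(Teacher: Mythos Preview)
Your argument is correct and takes a genuinely different route from the paper's. The paper proceeds by a case split on $\lvert \B\rvert$: when $\lvert \B\rvert>\l^{+}$ it invokes saturation (Corollary~\ref{saturated2}) to extend the isomorphism $\langle \A,b\rangle\cong\langle \A,c\rangle$ to an automorphism of $\B$, and then to one of $\C$, producing a second realisation $a'$ of $p$ over $\B'$ with $\t(a'/\B')\neq\t(a/\B')$; the case $\lvert \B\rvert=\l^{+}$ is then reduced to the first by passing via Corollary~\ref{exists} to a new realisation outside $\C$. Your proof instead works uniformly at size $\l^{+}$ and avoids saturation entirely: you pull the whole problem down to the finitely generated extensions $\langle\A,c,a\rangle$ and the relabelled copy $\langle\A,c,a'\rangle$, amalgamate via Lemma~\ref{AP}, and apply the uniqueness clause of minimality directly over $\langle\A,c\rangle$. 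This buys a shorter, case-free argument that uses strictly less machinery (only AP and the definition of minimal), whereas the paper's automorphism-based proof is perhaps more conceptually transparent once saturation is in hand. One cosmetic point: your appeal to Lemma~\ref{peruslemma} is not really needed---the submodels $\langle\A,b\rangle$, $\langle\A,c\rangle$, $\langle\A,c,a\rangle$ are in $\K^{*}$ simply because they are submodels of $\C\in\K^{*}$ of size $\l^{+}$, and the abstract copy $\langle\A,c,a'\rangle$ is in $\K^{*}$ because $\K^{*}$ is closed under isomorphism.
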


\begin{proof} 
Assume first $\v \B \v > \l^+$.
Suppose for the sake of contradiction that $\t(a/\B)$ splits over $\A$.
Then, there are some $b, c \in \B$ such that $\t(b/\A)=\t(c/\A)$ but $\t(ab/\A) \neq \t(ac/\A)$.
Now, $\langle \A, b \rangle \cong \langle \A, c \rangle$,
and since $\B$ is saturated by Lemma \ref{saturated2},
there is an automorphism $f$ of $\B$ fixing $\A$ pointwise such that $f(b)=c$. 
By closing the model $\langle \A, b,c \rangle$
with respect to $f$ and its inverse,
we get a model  $\B' \subseteq \B$ such that $\v \B' \v =\l^+$,
$\A \subseteq \B'$, $b,c \in \B'$ and $f'=f \raj \B'$ is an automorphism of $\B'$. 
By Lemma \ref{saturated2}, $\C$ is saturated,
so $f'$ extends to an automorphism $g$ of $\C$ that fixes $\A$.
Now, $g(a)=a' \neq a$, and we get
$\t(ad/\A) \neq \t(ac/\A)=\t(a'd/\A)$, so $\t(a'/\B') \neq \t(a/\B')$ which contradicts the minimality of $p$.

Suppose now $\v \B \v =\l^+$. 
We may without loss assume that $\v \C \v =\l^{++}$.
By Corollary \ref{exists},
there is some $\C' \in \K^*$ such that $\C \subseteq \C'$
and some $a' \in \C' \setminus \C$ such that $\t(a'/\A)=p$.
By what we proved above, $\t(a'/\C)$ does not split over $\A$.
Suppose now, towards a contradiction,
that $\t(a/\B)$ splits over $\A$, and let
$b, c \in \B \subseteq \C$ witness the splitting.
By the minimality of the type $p$, 
we have $\t(a'/\B)=\t(a/\B)$,
and thus $\t(a'b/\A) \neq \t(a'c/\A)$,
so $\t(a'/\C)$ splits after all, a contradiction.
\end{proof}

\begin{corollary}\label{furthermore}
Let $\a \ge \o$ be an ordinal,
and let $\B_i \in \K^*$ and $b_i \in \B_{i+1} \setminus \B_i$, $i \le \a$,
be such that $\B_0=\A$,
$\B_i \subseteq \B_j$ whenever $i<j$,
$\t(b_i/\A)=p$, and $\B_{i+1}=\langle \B_i, b_i \rangle$.
Then, the sequence $(b_i)_{i<\a}$ 
is indiscernible over $\A$. 
\end{corollary}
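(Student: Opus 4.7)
The plan is to prove quantifier-free indiscernibility by first establishing order indiscernibility and then invoking Lemma \ref{ordind}. I will argue by induction on $n\ge 1$ that for every pair of increasing tuples $i_1<\cdots<i_n$ and $j_1<\cdots<j_n$ in $\a$,
$$\t(b_{i_1},\ldots,b_{i_n}/\A)=\t(b_{j_1},\ldots,b_{j_n}/\A);$$
the case $n=1$ is immediate from $\t(b_i/\A)=p$.

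For the inductive step I would set $M_i=\langle\A,b_{i_1},\ldots,b_{i_n}\rangle$ and $M_j=\langle\A,b_{j_1},\ldots,b_{j_n}\rangle$. Both have cardinality $\l^+$ and sit inside $\B_{i_n+1},\B_{j_n+1}\in\K^*$, so Lemma \ref{peruslemma} places them in $\K^*_{\l^+}\subseteq\K^*_{<\k}$. The inductive hypothesis provides an isomorphism $f\colon M_i\to M_j$ fixing $\A$ pointwise and sending $b_{i_k}\mapsto b_{j_k}$ for $k\le n$. The aim is to extend $f$ by $b_{i_{n+1}}\mapsto b_{j_{n+1}}$. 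By Lemma \ref{dnsplit} both $\t(b_{i_{n+1}}/M_i)$ and $\t(b_{j_{n+1}}/M_j)$ are non-splitting extensions of $p$; since $f$ fixes $\A$, the pushforward $q$ of $\t(b_{i_{n+1}}/M_i)$ under $f$ is likewise a non-splitting extension of $p$ over $M_j$.

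The crux is then to show $q=\t(b_{j_{n+1}}/M_j)$. Via $f$ I would transport $\langle M_i,b_{i_{n+1}}\rangle$ to a model $N'\in\K^*_{<\k}$ extending $M_j$ in which some $a$ realizes $q$, and then use Lemma \ref{AP} to amalgamate $N'$ with $\langle M_j,b_{j_{n+1}}\rangle$ over $M_j$ into a single $N''\in\K^*_{<\k}$ containing both $a$ and $b_{j_{n+1}}$. Both elements realize $p$ over $\A$ and lie in $N''\setminus M_j$, so the minimality of $p$ applied to $\A\subseteq M_j\subseteq N''$ forces $\t(a/M_j)=\t(b_{j_{n+1}}/M_j)$, i.e.\ $q=\t(b_{j_{n+1}}/M_j)$. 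Consequently $f$ extends to an isomorphism $\langle M_i,b_{i_{n+1}}\rangle\to\langle M_j,b_{j_{n+1}}\rangle$, giving the desired equality of types and completing the induction step.

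The main point of care is ensuring every model produced along the way remains in $\K^*_{<\k}$ so that Lemma \ref{AP} genuinely applies. This is automatic: all models in sight are generated over $\A$ by finitely many extra tuples and therefore have cardinality $\l^+$, so Lemma \ref{peruslemma} keeps them inside $\K^*$. Non-splitting (Lemma \ref{dnsplit}) plays only a confirmatory role: it certifies that the two candidate extensions of $p$ being compared over $M_j$ are each types of genuine realizations of $p$, after which minimality together with amalgamation delivers uniqueness at once.
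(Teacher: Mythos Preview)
Your argument is correct. The overall scaffold matches the paper exactly: reduce to order indiscernibility and then invoke Lemma~\ref{ordind}. The difference lies in how the inductive step is carried out. The paper's one-line proof (``follows from Lemma~\ref{dnsplit} as usually'') points to the standard non-splitting argument: by induction compare $(b_{i_0},\ldots,b_{i_n})$ with $(b_0,\ldots,b_{n-1},b_{i_n})$ using that $\t(b_{i_n}/\B_{i_n})$ does not split over~$\A$, and then with $(b_0,\ldots,b_n)$ using minimality directly inside the ambient chain---no amalgamation is needed, since all the relevant elements already live together in $\B_\alpha$. Your route instead transports via the isomorphism $f$ and then amalgamates (Lemma~\ref{AP}) to place the two candidate realizations of $p$ in a common model before invoking minimality. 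This is perfectly valid and arguably more conceptual, but it buys the uniqueness of the extension at the cost of an extra appeal to AP that the non-splitting route avoids. Your closing remark that Lemma~\ref{dnsplit} plays ``only a confirmatory role'' is in fact an understatement: in your argument it plays no role at all, and you could delete the reference to it.
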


\begin{proof}
By Lemma \ref{ordind}, it is enough to show that the sequence $(b_i)_{i<\a}$
is order indiscernible.
This follows from Lemma \ref{dnsplit} as usually.
\end{proof}

\begin{lemma}\label{pregeometry}
Suppose $\A \subseteq \B$, $\B \in \K^*_{>\l^+}$, $X=\{a \in \B \, | \, \t(a/\A)=p\}$, and for $A \subseteq X$, let $cl(A)= \langle \A \cup A \rangle \cap X$.
Then, $(X, cl)$ is a pregeometry. 
\end{lemma}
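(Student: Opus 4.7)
The plan is to verify the four pregeometry axioms: reflexivity, finite character, idempotence, and exchange. The first three are immediate from the definition of $cl$ and the fact that $\langle\cdot\rangle$ denotes submodel generation in a universal class. Reflexivity follows from $A\subseteq\langle\A\cup A\rangle\cap X=cl(A)$; finite character holds because every element of $\langle\A\cup A\rangle$ is an $L$-term applied to finitely many generators from $\A\cup A$; and idempotence follows from $cl(A)\subseteq\langle\A\cup A\rangle$, which gives $\langle\A\cup cl(A)\rangle\subseteq\langle\A\cup A\rangle$ and hence $cl(cl(A))\subseteq cl(A)$.

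The substance is the exchange axiom. Suppose $a\in cl(A\cup\{b\})\setminus cl(A)$ with $a,b\in X$, and let $\B_0=\langle\A\cup A\rangle$ and $\B_1=\langle\B_0,a\rangle$. Write $a=t(\bar c,b)$ for some $L$-term $t$ and tuple $\bar c$ from $\A\cup A\subseteq\B_0$. Then $b\notin\B_0$, since otherwise $a=t(\bar c,b)\in\B_0$, contradicting $a\notin cl(A)$. By minimality of $p$, both $a$ and $b$ realize the unique extension of $p$ to $\B_0$, so $\t(a/\B_0)=\t(b/\B_0)$. Assume for contradiction $b\notin\B_1$. By minimality again, $\t(b/\B_1)$ is the unique extension of $p$ to $\B_1$, so any realization of $p$ lying outside $\B_1$ (in any $\K^*$-extension of $\B$) satisfies the formula $t(\bar c,y)=a$.

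I would then construct, in a sufficiently saturated $\K^*$-extension (using Corollary \ref{exists} iteratively), an infinite sequence $a_0=a,a_1,a_2,\ldots$ of realizations of $p$ with each $a_i$ (for $i\ge 1$) lying outside the submodel generated by $\B_1$ and the previous $a_j$, arranged to be quantifier free indiscernible over $\A\cup\bar c$. By the preceding remark, $t(\bar c,a_i)=a_0$ for every $i\ge 1$. Indiscernibility over $\A\cup\bar c$ gives $\t(a_1a_0/\A\cup\bar c)=\t(a_0a_1/\A\cup\bar c)=\t(a_0a_2/\A\cup\bar c)$, so the equation $t(\bar c,a_1)=a_0$ successively yields $t(\bar c,a_0)=a_1$ and then $t(\bar c,a_0)=a_2$, forcing $a_1=a_2$ and contradicting the distinctness of the $a_i$. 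The main obstacle is the construction of this indiscernible sequence over the enlarged base $\A\cup\bar c$, since Corollary \ref{furthermore} as stated yields indiscernibility only over $\A$. The natural fix is to pass to the model $\A'=\langle\A,\bar c\rangle\in\K^*_{\l^+}$: the extension of $p$ to $\A'$ is still minimal in the sense of Definition \ref{minimal} (both clauses transfer directly from the minimality of $p$), so the analogues of Lemma \ref{models} and Corollary \ref{furthermore} apply with $\A'$ in place of $\A$, and Lemma \ref{ordind} upgrades order indiscernibility to the full indiscernibility needed for the swap step.
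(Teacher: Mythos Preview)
Your argument is correct and follows essentially the paper's strategy: extend $p$ to a minimal type over $\A'=\langle\A,\bar c\rangle$, build a Morley-type sequence of realizations, and exploit full indiscernibility over $\A'$ via a swap. The paper's only simplification is to place $b$ itself in the sequence (taking $a_0=a$, $a_1=b$, then continuing with Corollary~\ref{exists}), so that the automorphism swapping $a_0$ and $a_1$ immediately turns $a\in\langle\A',b\rangle$ into $b\in\langle\A',a\rangle$, avoiding your term-equation detour through $t(\bar c,a_0)=a_1=a_2$.
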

\begin{proof}
For exchange, we may without loss assume $\l^{+} < \v \B  \v \le \k$, 
since if exchange fails, it fails also in a small submodel.
assume $c \in X$ is a tuple, and $a \in cl(cb) \setminus cl(c)$.
Suppose for the sake of contradiction that $b \notin cl(ca)$.
By Lemma \ref{exists} and Corollary \ref{saturated2}, there is a sequence  $(a_i)_{i<\o} \in X$ such that $a_i \notin \langle \A, c, a_j \rangle_{j<i}$, $a_0=a$, and $a_1=b$.
By Corollary  \ref{furthermore}, this sequence is indiscernible, so there is some automorphism of $\langle \A, c, a_i \rangle_{i<\o}$ 
which fixes $\langle \A, c \rangle$ and swaps $a$ and $b$.
It follows that $a \notin cl(cb)$, a contradiction.
 \end{proof}

\begin{lemma}\label{generated}
Suppose $\A \subseteq \B \in \K^*_{>\l^{+}}$ and $(a_i)_{i<\a} \subseteq \B$ is a maximal independent sequence of realisations of $p$.
Then, $\B=\langle \A, a_i \rangle_{i<\a}$.
\end{lemma}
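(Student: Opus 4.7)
Set $\B' := \langle \A, a_i \rangle_{i<\a}$; then $\B' \subseteq \B$ is automatic, and Lemma \ref{peruslemma}(iii) with witness $\A$ shows $\B' \in \K^*$. By the maximality of $(a_i)$ together with Lemma \ref{pregeometry} we have $X \cap \B \subseteq cl(\{a_i\}) = \B' \cap X$, so every realisation of $p$ in $\B$ already lies in $\B'$. The plan is to suppose for contradiction that some $b \in \B \setminus \B'$ exists and manufacture a realisation of $p$ in $\B \setminus \B'$, contradicting this containment.

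First I reduce to the case $|\B| \le \k$. If $|\B| > \k$, pick a submodel $\B^* \subseteq \B$ with $\A \cup \{b\} \subseteq \B^*$ and $\l^+ < |\B^*| \le \k$ (possible since $LS(\K^*) = \l^+$ by Lemma \ref{aec} and $\k \ge \l^{++}$), and extend $(a_i) \cap \B^*$ to a maximal independent sequence $(a'_j)_{j<\b}$ of realisations of $p$ inside $\B^*$. Each $a'_j$ realises $p$ and therefore lies in $X \cap \B \subseteq \B'$, so once the lemma is established for $\B^* \in \K^*_{\le \k}$ it yields $\B^* = \langle \A, (a'_j)_{j<\b} \rangle \subseteq \B'$, contradicting $b \in \B^* \setminus \B'$.

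So assume $|\B| \le \k$; then $\B$ is saturated by Corollary \ref{saturated2} and weakly saturated by Lemma \ref{weaksat2}. In the subcase $|\B'| < |\B|$, Corollary \ref{exists} provides $\C \supseteq \B$ and $c \in \C \setminus \B$ with $\t(c/\A) = p$, and weak saturation of $\B$ over the strictly smaller parameter set $\B'$ realises $\t(c/\B')$ by some $c' \in \B$. Then $\t(c'/\A) = p$ forces $c' \in X \cap \B \subseteq \B'$, while the inclusion of $c \ne d$ for every $d \in \B'$ in $\t(c/\B')$ forces $c' \notin \B'$ --- the desired contradiction.

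The remaining subcase $|\B'| = |\B| =: \mu$ is the main obstacle. Both $\B$ and $\B'$ are now saturated of the same size $\mu > \l^+$, and my aim is to build an isomorphism $f \colon \B \to \B'$ fixing $\A \cup (a_i)_{i<\a}$ pointwise: because $\B' = \langle \A, a_i \rangle$ is generated by these fixed elements, $f$ acts as the identity on $\B'$, and the bijection $f$ then forces $\B = \B'$, contradicting $b \in \B \setminus \B'$. The construction of $f$ is the delicate point, since the parameter set $\A \cup (a_i)$ already has cardinality $\mu$, so ordinary back-and-forth via saturation over $< \mu$-sized sets does not apply directly. I expect to resolve this by noting that extensions of $p$ over $\B'$ do not split over $\A$ by Lemma \ref{dnsplit}, and hence by Lemma \ref{stab} are determined by their restriction to representatives of the at most $\l^+$ many $\A$-types realised in $\B'$ --- a parameter set of size $< \mu$ which saturation can handle; combined with the indiscernibility of $(a_i)$ over $\A$ (Corollary \ref{furthermore}) and the $\Delta$-system technique of Lemma \ref{delta0}, this should allow a stepwise matching of elements of $\B$ with elements of $\B'$ over $\A \cup (a_i)$, yielding $f$.
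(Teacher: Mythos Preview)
Your argument has a real gap in the main case $|\B'|=|\B|=\mu$. You propose to build an isomorphism $f\colon\B\to\B'$ fixing $\A\cup\{a_i\}$ pointwise, but since $\B'=\langle\A,a_i\rangle$ any such $f$ fixes all of $\B'$; an injection $\B\to\B'$ that is the identity on $\B'$ can exist only when $\B=\B'$, so the existence of $f$ is precisely the statement to be proved, not a route to it. Your sketch for actually constructing $f$ is explicitly tentative (``I expect to resolve this''), and the main tool you invoke, Lemma~\ref{dnsplit}, controls only types of elements realising $p$. For an arbitrary $b\in\B$ there is no reason $\t(b/\B')$ should be determined by its restriction to a set of size $<\mu$, so the ``parameter set of size $<\mu$ which saturation can handle'' is unavailable exactly where you need it. The indiscernibility of $(a_i)$ and the $\Delta$-system lemma do not fill this hole either: they give information about sequences of realisations of $p$, not about how a general $b$ sits over $\B'$.

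The paper's argument avoids this obstacle by changing the comparison. Rather than trying to match $\B$ with $\B'$ over the large set $\B'$, it uses Corollary~\ref{exists} to build a model $\C=\langle\A,c_i\rangle_{i<\b}$ of size $\b=|\B|$ that \emph{is} generated by an independent sequence of realisations of $p$. Now $\B$ and $\C$ are both saturated of cardinality $\b$ (Corollary~\ref{saturated2}) and both contain $\A$, which has size only $\l^{+}<\b$; ordinary back-and-forth over the \emph{small} set $\A$ gives $\B\cong_{\A}\C$. After identifying $\C$ with $\B$, each $c_i$ is a realisation of $p$ inside $\B$ and hence, by the observation in your first paragraph, lies in $cl(\{a_i\})\subseteq\B'$; therefore $\B=\langle\A,c_i\rangle_{i<\b}\subseteq\B'$. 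The whole point is that saturation is applied over $\A$, where it works, rather than over $\B'$, where it does not.
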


\begin{proof}
If the statement does not hold, then there is a counterexample of power at most $\k$,
so we may assume $\v \B \v=\b \le \k$.
Using Lemma \ref{exists}, we find a model
$\C \supseteq \A$ such that $(c_i)_{i<\b} \subseteq \C$ is an independent sequence of realisations of $p$, and $\C=\langle \A, c_i \rangle_{i<\b}$.
By saturation (Corollary \ref{saturated2}), we may assume $\C=\B$.
Since $(a_i)_{i<\a}$ was maximal, we have $(c_i)_{i<\b} \subseteq cl(\{a_i\}_{i<\a})$,
and thus $\B=\langle \A, c_i \rangle_{i<\b} \subseteq \langle \A, a_i \rangle_{i<\a}$.
\end{proof}

\begin{theorem}\label{categoricity}
Let $\K$ be a universal class with arbitrarily large models and let $\k$ be a regular cardinal such that $\K$ 
is categorical in $\k$.
Suppose $\l=LS(\K)$ and $\k>\l^+$. 
Let $\K^*$ be the class consisting of all the models $\B \in \K$ such that $\vert \B \vert > \l$ 
and $\B$ can be embedded in some $\C \in \K_{\ge \k}$.
Then, $\K^*$ is categorical in every $\mu>\l^+$.
\end{theorem}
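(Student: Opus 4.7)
The plan is to show that, for $\mu > \l^+$, every $\B \in \K^*_\mu$ is determined up to isomorphism over $\A$ by the length of a maximal independent sequence of realisations of $p$ in $\B$, and that this length must equal $\mu$. Corollary \ref{cate0} already covers the case $\l^+ < \mu \le \k$, so the remaining content is $\mu > \k$, but the argument below is uniform in $\mu$.

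Fix $\B, \B' \in \K^*_\mu$. First I would embed $\A$ into each of them. Since $LS(\K^*) = \l^+$ (Lemma \ref{aec}) and $\mu \ge \l^{++}$, I can pick a submodel $\B_0 \subseteq \B$ with $\B_0 \in \K^*_{\l^{++}}$; by Lemma \ref{universal}, $\B_0$ is $\l^{++}$-universal, so it contains a copy of $\A$, which I identify with $\A$, and similarly for $\B'$. Next, Lemma \ref{generated} yields maximal independent sequences $(a_i)_{i<\gamma} \subseteq \B$ and $(a'_i)_{i<\gamma'} \subseteq \B'$ of realisations of $p$ with $\B = \langle \A, a_i \rangle_{i<\gamma}$ and $\B' = \langle \A, a'_i \rangle_{i<\gamma'}$. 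A straightforward term count gives $\v \B \v = \max(\l^+, \v \gamma \v)$, and since $\v \B \v = \mu > \l^+$ this forces $\v \gamma \v = \mu$; similarly $\v \gamma' \v = \mu$. After reindexing, I may assume $\gamma = \gamma' = \mu$.

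It then remains to build an isomorphism $f : \B \to \B'$ that fixes $\A$ pointwise and sends $a_i \mapsto a'_i$. By Corollary \ref{furthermore}, applied on each side with $\B_i = \langle \A, a_j \rangle_{j<i}$, both sequences are quantifier-free indiscernible over $\A$. I would then prove by induction on $n < \o$ that the map $\mathrm{id}_\A \cup \{(a_i, a'_i) \, | \, i<n\}$ extends to an isomorphism $\sigma_n : \langle \A, a_0, \dots, a_{n-1} \rangle \to \langle \A, a'_0, \dots, a'_{n-1} \rangle$. At the successor step, independence of the pregeometry (Lemma \ref{pregeometry}) gives $a_n \notin \langle \A, a_0, \dots, a_{n-1} \rangle$ and likewise for $a'_n$, so both $\sigma_n(\t(a_n/\langle \A, a_0, \dots, a_{n-1} \rangle))$ and $\t(a'_n/\langle \A, a'_0, \dots, a'_{n-1} \rangle)$ are non-algebraic extensions of $p$; by the minimality clause of Definition \ref{minimal} these must coincide, and $\sigma_n$ extends to $\sigma_{n+1}$. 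Combining this with the indiscernibility of each sequence over $\A$, the bijection $\mathrm{id}_\A$ together with $a_i \mapsto a'_i$ preserves every quantifier-free formula on every finite tuple. Since $\K$ is a universal class, such a quantifier-free type preserving bijection of generators extends uniquely to the desired isomorphism $f: \B \to \B'$.

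The main obstacle is the inductive step above, namely verifying that the quantifier-free type of a maximal independent sequence of realisations of $p$ over $\A$ is determined purely by $p$. Definition \ref{minimal} is exactly designed to supply uniqueness of the non-algebraic extension at each step, but one has to be careful to certify that both candidate extensions are indeed non-algebraic, which is where the pregeometry from Lemma \ref{pregeometry} enters to guarantee $a_n \notin \langle \A, a_0, \dots, a_{n-1} \rangle$ and correspondingly on the primed side.
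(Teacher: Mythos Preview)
Your proposal is correct and follows essentially the same route as the paper: embed $\A$ into each model, invoke Lemma~\ref{generated} to write both as $\langle \A, a_i\rangle_{i<\mu}$ over a maximal independent sequence of realisations of $p$, and use minimality of $p$ to see that $a_i\mapsto a'_i$ determines an isomorphism. You have in fact filled in details the paper leaves implicit --- the term count forcing $|\gamma|=\mu$, the passage through a submodel of size $\l^{++}$ so that Lemma~\ref{universal} genuinely applies when $\mu>\k$, and the inductive verification (via Definition~\ref{minimal} and the pregeometry) that the two sequences have the same quantifier-free type over $\A$ --- where the paper simply writes ``since the type $p$ is minimal.''
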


\begin{proof}
Suppose $\v \B \v = \v \C \v = \mu>\l^+$.
By Lemma \ref{universal}, we may assume $\A \subseteq \B, \C$, and by Lemma \ref{generated}, 
there are independent sequences  $(b_i)_{i<\mu} \subseteq \B$ and $(c_i)_{i<\mu} \subseteq \C$ such that the $b_i$ and $c_i$ are realisations of $p$ and $\B=\langle \A, b_i \rangle_{i<\mu}$ and  $\C=\langle \A, c_i \rangle_{i<\mu}$.
The map $f: \B \to \C$ defined by $f \raj \A=id$ and $f(b_i)=c_i$ for $i<\mu$ 
generates an isomorphism since the type $p$ is minimal. 
\end{proof}

Next we show that only small models in $\K$ fall outside $\K^*$.

\begin{theorem}\label{jatko}
If $\B \in \K$ and $\v \B \v \ge \beth_{(2^{\l^+})^+}$, then $\B \in \K^*$.
\end{theorem}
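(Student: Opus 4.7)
The plan is to verify the criterion of Lemma \ref{peruslemma}(iii): we will produce a submodel $\A_0 \subseteq \B$ with $|\A_0| = \l^+$ in $\K^*$ such that $\langle \A_0, a \rangle \in \K^*$ for every finite tuple $a \in \B$, and conclude $\B \in \K^*$.

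The cardinal $\beth_{(2^{\l^+})^+}$ appears in the hypothesis precisely because it is the Erd\H{o}s--Rado threshold needed, in a language of cardinality at most $\l$, to extract from any structure of that size a quantifier free indiscernible sequence of length $\l^+$ over any parameter set of cardinality at most $\l^+$. Applying this first to $\B$ with empty parameter set, we get an indiscernible $I = (a_i)_{i<\l^+} \subseteq \B$ and set $\A_0 = \langle I \rangle$. The Ehrenfeucht--Mostowski template determined by $I$ (as in Lemma \ref{EM}) then produces a model in $\K_\k$ containing a copy of $\A_0$, so $\A_0 \in \K^*$.

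Next, for each finite tuple $a \in \B$ we apply Erd\H{o}s--Rado a second time, now with parameter set $\A_0 \cup \{a\}$ of cardinality $\l^+$, to obtain a quantifier free indiscernible sequence $J = (b_i)_{i<\l^+} \subseteq \B$ over $\A_0 \cup \{a\}$. Starting from $\D_0 = \langle \A_0, a, J \rangle$, we build a chain $(\D_\alpha)_{\alpha \le \k}$ in $\K$ by letting $\D_{\alpha+1} = \langle \D_\alpha, c_\alpha \rangle$ where $c_\alpha$ realises the average type $Av(J/\D_\alpha)$ (and is chosen distinct from every element of $\D_\alpha$), and taking unions at limits. An induction in the spirit of Lemma \ref{avind} shows that the extended sequence $J \cup (c_\beta)_{\beta \le \alpha}$ stays quantifier free indiscernible over $\A_0 \cup \{a\}$, which makes the next average type well defined. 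That $\D_{\alpha+1} \in \K$ follows by repeating the argument in the proof of Lemma \ref{avexists0}: every finitely generated substructure of $\D_{\alpha+1}$ is, by the indiscernibility just maintained, isomorphic to a finitely generated substructure of $\B$ and hence in $\K$, and $\K$ is closed under directed unions. After $\k$ steps $\D_\k \in \K$ has size at least $\k$, witnessing $\langle \A_0, a \rangle \in \K^*$; then Lemma \ref{peruslemma}(iii) gives $\B \in \K^*$.

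The main obstacle is that most of the existing machinery for average types (Lemmas \ref{pienityyppi}, \ref{delta0}, \ref{weaksat}, \ref{avind}) assumes the ambient model lies in $\K^*$, and $\B$ is the very object whose membership in $\K^*$ we are trying to establish. Our way around this circularity is to perform the second Erd\H{o}s--Rado extraction so that $J$ is already indiscernible over the full parameter set $\A_0 \cup \{a\}$ from the outset: then the dichotomy required to define and to reproduce the average type follows directly from indiscernibility, with no need for a stability or saturation argument that would rest on a $\K^*$ hypothesis. With $J$ chosen this way, the core of the proof of Lemma \ref{avexists0} -- compactness together with closure of $\K$ under isomorphism of finitely generated substructures and under directed unions -- transports without modification.
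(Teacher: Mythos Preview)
Your argument is correct and rests on the same core idea as the paper's: exploit $|\B|\ge\beth_{(2^{\l^+})^+}$ to extract, via Erd\H{o}s--Rado, a sequence in $\B$ that is quantifier free indiscernible over a $\l^+$-sized parameter set, and then stretch it to length $\k$ inside $\K$ to witness membership in $\K^*$. The paper packages this more economically. It argues by contradiction through Lemma~\ref{peruslemma}(ii) rather than (iii): assuming $\B\notin\K^*$, one gets a single $\C\subseteq\B$ of size $\l^+$ with $\C\notin\K^*$, runs one Ehrenfeucht--Mostowski construction over $\C$ (the finitely generated pieces are isomorphic to substructures of $\B$, so the stretched model $\D=\langle\C,a_i\rangle_{i<\k}$ lies in $\K$ by directed unions), and reaches a contradiction immediately. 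Your route does more work in two places: the preliminary construction of $\A_0$ and the verification $\A_0\in\K^*$ are subsumed by the second step anyway (take $a\in\A_0$), and the average-type chain you build to extend $J$ is a heavier substitute for the direct EM stretching the paper uses --- once $J$ is indiscernible over $\A_0\cup\{a\}$, the EM template alone already yields a length-$\k$ sequence with all finite pieces isomorphic to substructures of $\B$, so no appeal to average types (or to the care you take in avoiding $\K^*$-dependent lemmas) is needed. What your version buys is an explicit, self-contained demonstration that the average-type machinery can be run outside $\K^*$ when the indiscernibility is over the full parameter set; the paper simply sidesteps that issue.
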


\begin{proof}
Denote $H= \beth_{(2^{\l^+})^+}$.
If $\k\le H$, we are done, so assume $H<\k$.
Suppose $\B \in \K$, $\v \B \v \ge H$ and $\B \notin \K^*$.
By Lemma \ref{peruslemma} (ii), there is some $\C \subseteq \B$ such that $\v \C \v = \l^+$
and $\C \notin \K^*$.
With an Ehrenfeucht-Mostowski construction, we find a sequence $I=(a_i)_{i<\k}$ quantifier free indiscernible over $\C$ so that for all finite $X \subset \k$, it holds that $\langle \C, a_i \rangle_{i \in X} \in \K$.
Denote $\D=\langle \C, a_i \rangle_{i<\k}$.
Since $\K$ is an AEC, $\D \in \K$.
Now we have $\C \subseteq \D \in \K_\k$, and thus $\C \in \K^*$, a contradiction. 
\end{proof}

\section{Models are vector spaces}\label{vector}

\noindent
In this section, we will prove two theorems that together state that 
up to a coordinatisation all models in the class $\K^*_{>\l^+}$ either have the structure of a vector space or a trivial pregeometry.
Recall that in the previous section, after Lemma \ref{uniqext}, we fixed a model
 $\A \in \K^*_{\l^+}$ and a minimal type $p \in S_{qf}(\A)$ (see Definition \ref{minimal}).
 
Suppose $\B \in \K^*_{>\l^+}$, $\A \subseteq \B$ and $X=\{a \in \B \, | \, \t(a/\A)=p\}$.
For $A \subseteq X$, let $cl(A)= \langle \A \cup A \rangle \cap X$.
By Lemma \ref{pregeometry}, $(X, cl)$ is a pregeometry. 
We will use a result due to Zilber to prove that either this pregeometry is trivial or the set $X$ can be given the structure of a vector space, and then show that the structure $\B$ can be strongly coordinatised using elements from $X$.

To be able to prove this, we need the following two lemmas.
The first one states that the class $\K^*$ has AP and that all the models in $\K^*$ are saturated,
and the second one says that non-algebraic quantifier free types over models of size $\l^+$ always have non-algebraic extensions.
 
\begin{lemma}\label{apsat}
The class $\K^*$ has AP and every model in $\K^*$ is saturated.  
\end{lemma}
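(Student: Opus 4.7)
The claim is that Lemmas \ref{AP}, \ref{saturated}, and \ref{saturated2} extend to all of $\K^*$. I plan to establish amalgamation at all sizes first, and then use it together with categoricity (Theorem \ref{categoricity}) and stability (Lemma \ref{stab}) to deduce saturation.

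For the amalgamation property, given $\A_0 \subseteq \B, \C$ in $\K^*$, I would argue by induction on $\mu = \max(|\B|,|\C|)$. The base case $\mu < \k$ is exactly Lemma \ref{AP}. In the inductive step, when $\mu \ge \k$, I would resolve both $\B$ and $\C$ as continuous increasing unions $\B = \bigcup_{i<cf(\mu)} \B_i$ and $\C = \bigcup_{i<cf(\mu)} \C_i$, with $\A_0 \subseteq \B_0$, $\A_0 \subseteq \C_0$, and each $\B_i, \C_i \in \K^*$ of size strictly less than $\mu$; such resolutions exist because Lemma \ref{peruslemma} ensures submodels of size $\l^+$ lie in $\K^*$ and Lemma \ref{aec} gives closure of $\K^*$ under unions of chains. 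I would then build a continuous chain of amalgams $(\D_i, f_i)$ with $\D_i \in \K^*$, $\B_i \subseteq \D_i$, and $f_i : \C_i \to \D_i$ fixing $\A_0$, by amalgamating $\D_i$ first with $\B_{i+1}$ over $\B_i$ and then with $\C_{i+1}$ over $f_i(\C_i)$; both of these amalgamations involve only models of size $<\mu$ and are furnished by the inductive hypothesis. Taking unions at limit stages, $\D = \bigcup_i \D_i$ contains $\B$ and $f = \bigcup_i f_i$ embeds $\C$ into $\D$ over $\A_0$.

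For saturation, when $|\A| = \l^+$ the statement is vacuous, since $\K^*$ contains no model of size $\le \l$; the cases $\l^+ < |\A| < \k$ and $|\A| = \k$ are handled by Corollaries \ref{saturated} and \ref{saturated2}. For $|\A| = \mu > \k$, I would build a saturated model of size $\mu$ by the same kind of chain construction as in Corollary \ref{saturated2}, now with AP available at all sizes: start with $\B_0 \in \K^*_{\l^+}$, and at stage $i+1$ use AP to extend $\B_i$ to $\B_{i+1}$ realising every consistent quantifier free type over every submodel of $\B_i$ of size $<\mu$. The number of such types is controlled by the natural extension of Lemma \ref{stab}, proved by embedding the parameter set into a saturated submodel of size $\k$ when the parameter set is small enough and combining several such otherwise. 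Iterating $\mu$ times and taking unions produces a saturated model of size $\mu$, and then Theorem \ref{categoricity} transfers saturation to every model in $\K^*_\mu$.

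The main obstacle is the bookkeeping in the inductive amalgamation: one must perform two amalgamations at every successor stage while keeping the embeddings $f_i$ compatible and verifying that the intermediate models stay in $\K^*$. A secondary technical point for saturation at $\mu > \k$ is obtaining the stability bound over parameter sets of size $<\mu$, which extends Lemma \ref{stab} by localising to saturated submodels of size $\k$; some additional care is needed when $\mu$ is singular to ensure the chain reaches size exactly $\mu$.
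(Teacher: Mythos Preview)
Your inductive amalgamation and chain-construction for saturation are a legitimate route, but the paper takes a much shorter path that avoids all of it. Having already proved Theorem \ref{categoricity}, the paper simply observes that for any regular $\xi>\k$ the class $\K$ is $\xi$-categorical, and hence the pair $(\K,\xi)$ satisfies exactly the standing hypotheses that $(\K,\k)$ did at the start of Section \ref{cat}. Defining $\K^{**}$ analogously with $\xi$ in place of $\k$, one checks $\K^{**}=\K^*$ (any model in $\K^*$ sits inside a model of size $\k$, which by categoricity embeds into any model of size $\xi$), and then Lemma \ref{AP} and Corollary \ref{saturated}, applied verbatim with $\xi$ replacing $\k$, yield AP and saturation for $\K^*_{<\xi}$. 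Since $\xi$ was arbitrary this gives the full result in a few lines, with no new induction and no need to revisit stability.

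Your approach would also succeed, but the weakest link is the sentence ``the natural extension of Lemma \ref{stab}, proved by \ldots\ combining several such otherwise.'' Decomposing a parameter set $A$ of size $\ge\k$ into pieces of size $<\k$ and invoking Lemma \ref{stab} on each piece does not by itself bound $|S(A)|$; compatible systems of types over the pieces can in principle be far more numerous than $|A|$. The honest fix is to use Lemma \ref{generated} (available at this point) to write the ambient model as $\langle \A, a_i\rangle_{i<\alpha}$ with $(a_i)$ indiscernible over $\A$ and then rerun the counting argument of Lemma \ref{stab} against that sequence. But once you are reproving Lemma \ref{stab} for a larger categoricity cardinal, you are doing by hand precisely what the paper's replay trick gives for free.
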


\begin{proof}
Let $\xi > \k$ be a regular cardinal.
It suffices to prove the lemma for $\K^*_{<\xi}$.
Let $\K^{**}$ be the class consisting of all the models $\B \in \K$ such that $\v \B \v > \l$ and there is some $\C \in \K$ such that $\B \subseteq \C$ and $\v \C \v \ge \xi$.

By Theorem \ref{categoricity}, $\K^{**}=\K^*$. 
Since $\K^{**}$ and $\xi$ satisfy the assumptions posed for $\K^*$ and $\k$ in the beginning of Section \ref{cat} (note that $\K$ is $\xi$-categorical by Theorem \ref{categoricity}),
the class $\K^{**}_{<\xi}$ has AP by Lemma \ref{AP}, and every model in $\K^{**}_{<\xi}$ is saturated by Corollary \ref{saturated}. 
 \end{proof}

\begin{lemma}\label{tyyppilaajenee}
Let $\B, \C \in \K^*$ be such that $\v \B \v =\l^+$ and $\B \subseteq \A$, and let $b \in \C \setminus \B$.
Then, there is some $\D \in \K^*$ and $c \in \D \setminus \C$ such that $\C \subseteq \D$ and
$\t(c/\B)=\t(b/\B)$.
\end{lemma}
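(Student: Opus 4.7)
I would mirror the proof of Corollary \ref{exists}. The idea is to build a quantifier free indiscernible sequence of realisations of $q=\t(b/\B)$ of length $\l^{++}$ above $\C$ in $\K^*$, and then realise the average type of this sequence via Corollary \ref{avexists} to produce a fresh element $c$. By the furthermore part of Corollary \ref{avexists} combined with indiscernibility over $\B$, the element $c$ will realise $q$ over $\B$; and the freshness of the average type realisation will place $c$ outside $\C$.

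\textbf{Step 1.} I would use the amalgamation property (Lemma \ref{apsat}) iteratively: at stage $i<\l^{++}$, amalgamate the model $\mathcal{M}_i$ built so far with $\langle\B,b\rangle$ over $\B$, choosing the image $b_i$ of $b$ so that $b_i\notin\mathcal{M}_i$. This yields a model $\mathcal{N}\in\K^*$ with $\C\subseteq\mathcal{N}$ and a sequence $(b_i)_{i<\l^{++}}$ of pairwise distinct realisations of $q$. Next, I would apply Lemma \ref{delta0} (which extends to arbitrary $\K^*$-models via the categoricity transfer exploited in Lemma \ref{apsat}) with base set $\B$; this is legitimate since $|\B|=\l^{+}<\l^{++}$, and it produces a subsequence $I=(b_{i})_{i\in X}$ of length $\l^{++}$ that is quantifier free indiscernible over $\B$.

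\textbf{Step 2.} By Corollary \ref{avexists}, combined with the compactness argument of Lemma \ref{avexists0} applied to the type $Av(I/\mathcal{N})\cup\{x\ne a:a\in\mathcal{N}\}$, there exist $\D\in\K^*$ with $\mathcal{N}\subseteq\D$ and $c\in\D\setminus\mathcal{N}$ satisfying $\t(c/\mathcal{N})=Av(I/\mathcal{N})$. The furthermore part of Corollary \ref{avexists} then gives that $I^\frown c$ is quantifier free indiscernible over $\B$; since every $b_i\in I$ realises $q$ over $\B$, indiscernibility forces $\t(c/\B)=q$. Because $\C\subseteq\mathcal{N}$ and $c\notin\mathcal{N}$, we get $c\notin\C$, as required.

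\textbf{The main obstacle} is Step 1: guaranteeing that at each amalgamation stage the image of $b$ can be taken outside the current model $\mathcal{M}_i$. Lemma \ref{apsat} provides \emph{some} amalgam, but a priori the image of $b$ could collapse onto an element of $\mathcal{M}_i$. The cleanest resolution is a disjoint amalgamation of $\mathcal{M}_i$ with $\langle\B,b\rangle$ over $\B$: if the intersection of the two factors inside the amalgam is exactly $\B$, then the new copy of $b$, not lying in $\B$, is automatically outside $\mathcal{M}_i$. Disjoint amalgamation over models can be extracted from Lemma \ref{apsat} by the standard trick of first amalgamating, then embedding the result into a large saturated $\K^*$-extension (which exists by Lemma \ref{apsat}) and using an automorphism fixing $\B$ pointwise to move one factor until its intersection with the other is reduced to $\B$.
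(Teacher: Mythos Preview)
There is a genuine gap in Step 1, and it is exactly the obstacle you flag but do not resolve. Your disjoint-amalgamation fix is circular: to move $b$ by an automorphism of a saturated $\mathcal{S}$ fixing $\B$ so that its image lands outside $\mathcal{M}_i$, you must first exhibit some realisation of $q=\t(b/\B)$ in $\mathcal{S}\setminus\mathcal{M}_i$. Saturation of $\mathcal{S}$ guarantees only that $q$ is realised in $\mathcal{S}$, not that it is realised outside a prescribed small submodel; for the latter you would need $q$ to admit a non-algebraic extension to $\mathcal{M}_i$, and that is precisely the statement of the lemma with $\C$ replaced by $\mathcal{M}_i$. Already at stage $0$ (with $\mathcal{M}_0=\C$ and $b\in\C$) the amalgam of $\C$ with $\langle\B,b\rangle$ over $\B$ may simply send $b$ to itself, and your automorphism trick cannot do better without assuming the conclusion.

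The paper avoids amalgamation altogether by locating the indiscernible sequence \emph{inside $\B$}. Writing $\B=\bigcup_{i<\l^+}\B_i$ with $\v\B_i\v=\l$, weak saturation of $\B$ (Lemma~\ref{weaksat2}) yields distinct $a_i\in\B\setminus\B_i$ with $\t(a_i/\B_i)=\t(b/\B_i)$; after extracting an indiscernible subsequence $I$ via Lemma~\ref{delta0}, one has $Av(I/\B)=\t(b/\B)$ directly from the construction, since each $d\in\B$ lies in some $\B_i$ and hence $\t(a_jd/\emptyset)=\t(bd/\emptyset)$ for cofinally many $j$. A single application of Corollary~\ref{avexists} then produces $c$ realising $Av(I/\C)$ in some $\D\supseteq\C$, and $c\notin\C$ is automatic since the $a_i$ are distinct elements of $\C$. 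No copies of $b$ outside $\C$ are ever manufactured; the whole argument lives over the given data.
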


\begin{proof}
Choose an increasing chain of models $\B_i \subseteq \B$, $i < \l^+$, such that $\v \B_i \v = \l$ and $\bigcup_{i<\l^+} \B_i=\B$.
By Lemma \ref{weaksat2}, $\B$ is weakly saturated, and thus there is  some $a_i \in \B \setminus \B_i$ such that $\t(a_i/\B_i)=\t(b/\B_i)$ (note that $\B_i \notin \K^*$, but this is not a problem since weak saturation is defined for arbitrary sets, see Definition \ref{satdef}). 
Moreover, we may choose these elements so that if $i \neq j$, then $a_i \neq a_j$.
Indeed, for each $i$, there is some $k<\l^+$ (note that we might have $k>i$) such that $a_j \in \B_k$ for each $j<i$,
and we can choose the element $a_i$ so that $a_i \in \B \setminus \B_k$.
 
By Lemma \ref{delta0}, we may assume the sequence $I=(a_i)_{i<\l^+}$ is quantifier free indiscernible, 
and by Lemma \ref{avexists0}, there is some model $\D \in \K^*$ and some $c \in \D$ such that $\C \subseteq \D$ and $\t(c/\C)=Av(I/\C)$.
By construction, we have $\t(b/\B)=Av(I/\B)$, and thus $\D$ and $c$ are as wanted.

\end{proof}

We will apply a result of Zilber's to show that either the pregeometry $(X, cl)$ is trivial or the set $X$ can be given the structure of a vector space.
For this, we need to show that $X$ can be viewed as a quasi-Urbanik structure in the sense of the below definition from \cite{zilber}.


\begin{definition}\label{hertrans}
Let $G$ be a permutation group of a set $X$.
For arbitrary $Y \subseteq X$, we denote by $G_Y$ the subgroup of $G$ that fixes each element of $Y$ and by $[Y]$
the set of elements in $X$ that are fixed by each element of $G_Y$.
We say $G$ is \emph{hereditarily transitive} if for each finite $Y \subseteq X$, the subgroup $G_Y$ is transitive on $X \setminus [Y]$, i.e. for any $x_1, x_2 \in X \setminus [Y]$, there is some $g \in G_Y$ such that $g(x_1)=x_2$.
\end{definition}

\begin{definition}
A structure is \emph{quasi-Urbanik} if its group of automorphisms is hereditarily transitive.
\end{definition}

Recall that we have fixed a model $\A \in \K^*_{\l^+}$ and a minimal type $p \in S_{qf}(\A)$.
 
\begin{theorem}\label{quasiurb}
Suppose $\B \in \K^*_{>\l^+}$ is such that $\A \subseteq \B$, let $X=\{a \in \B \, | \, \t(a/\A)=p\}$, and for $A \subseteq X$, let $cl(A)=\langle \A \cup A \rangle \cap X$.
Then, $(X, cl)$ is a pregeometry, and either it is trivial or has the structure of a vector space. 
\end{theorem}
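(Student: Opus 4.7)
The pregeometry claim is already given by Lemma \ref{pregeometry}, so the substance of the theorem is the dichotomy. The plan is to verify that the induced structure on $X$ (in the language of $\B$ together with constants for the elements of $\A$) is quasi-Urbanik in the sense of Definition \ref{hertrans}, and then to invoke Zilber's classification of quasi-Urbanik structures from \cite{zilber}, which in the present setting yields exactly the dichotomy between a trivial pregeometry and a vector space structure on $X$.

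Let $G$ be the group of permutations of $X$ induced by automorphisms of $\B$ fixing $\A$ pointwise; the main point will be to show that $G$ is hereditarily transitive. Fix a finite tuple $Y \subseteq X$ and set $\D = \langle \A \cup Y \rangle$, which is a submodel of $\B$ lying in $\K^*$ and of cardinality $\l^+$. I would first show that the fixed-point set $[Y]$ coincides with $cl(Y) = \D \cap X$. The inclusion $cl(Y) \subseteq [Y]$ is immediate, since any element of $G_Y$ fixes $\A \cup Y$ and hence the generated submodel $\D$. For the converse, given $x \in X \setminus cl(Y)$, the type $\t(x/\D)$ is nonalgebraic, and combining Corollary \ref{exists} with the saturation of $\B$ (Lemma \ref{apsat}) together with $\v \D \v = \l^+ < \v \B \v$ produces a second realization $x' \ne x$ of $\t(x/\D)$ in $\B$; a back-and-forth over $\D$ then yields an automorphism of $\B$ in $G_Y$ sending $x$ to $x'$, so $x \notin [Y]$.

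For hereditary transitivity itself, take $x_1, x_2 \in X \setminus cl(Y)$. Both realize $p$ over $\A$ and lie outside $\D \in \K^*$, so minimality of $p$ (Definition \ref{minimal}) forces $\t(x_1/\D) = \t(x_2/\D)$, and another application of saturation produces an automorphism of $\B$ fixing $\D$ pointwise and carrying $x_1$ to $x_2$. Restricting this automorphism to $X$ gives the required element of $G_Y$. Once this is in hand, $G$ acts hereditarily transitively on $X$, so the induced structure on $X$ is quasi-Urbanik, and Zilber's theorem then delivers the dichotomy.

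The main obstacle will be the verification that $[Y] = cl(Y)$, in particular the sub-claim that every nonalgebraic extension of $p$ to $\D$ has at least two realizations in $\B$, for which the argument must carefully combine Corollary \ref{exists} (to produce a fresh realization in an extension) with saturation (to reflect it back into $\B$). Once this is pinned down, hereditary transitivity drops out from minimality of $p$ plus a saturation-based back-and-forth, and the conclusion is then a direct appeal to Zilber's classification.
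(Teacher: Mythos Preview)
Your proposal is correct and follows essentially the same route as the paper: equip $X$ with the structure induced from $\B$ over $\A$, verify that the automorphism group is hereditarily transitive with $[Y]=cl(Y)$ via minimality of $p$ and saturation of $\B$, and then invoke Zilber's classification. The paper is terser (it asserts $[Y]=cl(Y)$ and the existence of the requisite automorphism without spelling out the use of Corollary~\ref{exists} and Lemma~\ref{apsat}), but your more detailed justification is exactly what underlies those assertions.
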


\begin{proof}
By Lemma \ref{pregeometry}, $(X, cl)$ is a pregeometry.

Let $L^*$ be the language consisting of predicates $R_q$ for the types $q \in S(\A)$ satisfied by tuples from $X$.
Consider $X$ as an $L^*$-structure by setting $x \in R_q$ if and only if $\t(x/\A)=q$.
The automorphisms of this structure are exactly the restrictions to $X$ of those automorphisms of $\B$ that fix $\A$ pointwise.

We claim that $X$ is quasi-Urbanik (as an $L^*$-structure).
Let $Y \subseteq X$ be a finite subset, and denote by $G_Y$ the automorphisms of $X$ that fix $Y$ pointwise.
Using the notation of Definition \ref{hertrans}, we now have $[Y]=cl(Y)$. 
If $x, y \in X \setminus cl(Y)$, then there is some automorphism of $\B$ fixing $cl(Y) \cup \A$ 
and sending $x$ to $y$.
It restricts to an automorphism of $X$ fixing $cl(Y)=[Y]$.
Thus, the $L^*$-structure $X$ is quasi-Urbanic.
 
By \cite{zilber}, Theorem B (on p. 167), there is a pregeometry $(V, cl)$ and a bijection between $X$ and $V \setminus cl(\emptyset)$ such that if $X_0 \subseteq X$ maps to $V_0 \subseteq V$, then $cl(X_0)$ maps to $cl(V_0) \setminus \emptyset$. 
Moreover, one of the following holds for $(V,cl)$: 
\begin{enumerate}[(i)]
\item The set $V$ is a vector space with a distinguished subspace $W$, and if $V_0 \subseteq V$, then the pregeometry is given by $cl(V_0)=span(W, V_0)$; 
\item the set $V$ is an affine space with a distinguished  linear subspace of parallel translations $W$, and if $V_0 \subseteq V$, then $cl(V_0)=aff_W(V_0)$ (the affine space generated by $V_0$ together with the action of $W$); 
\item there is some group $H$ acting on $V$, and if $V_0 \subseteq V$, then $cl(V_0)=\{h(v) \, | \, h \in H, v \in V_0\}$.
\end{enumerate}
In cases (i) and (ii), $(X, cl)$ has the structure of a vector space,
and in case (iii), the pregeometry is trivial. 
\end{proof}


Note that all models in the class $\K^*$ are universal, 
and thus each  $\B \in \K^*_{>\l^+}$ contains an isomorphic copy of $\A$.
Hence, Theorem \ref{quasiurb} describes the structure of all models in $\K^*_{>\l^+}$
eventhough its statement contains the assumption 
$\A \subseteq \B$.
Next, we prove a theorem saying that if 
$\B \in \K^*_{>\l^+}$ and $\A \subseteq \B$,
then there is a strong coordinatisation of $\B$ using the elements of $X=p(\B)$.
These two theorems together show that all models in $\K^*_{>\l^+}$ either are essentially vector spaces or trivial.

Let $\A^* \in \K^*_{\l^+}$
be a model such that $\A^*= \langle \A, a_i \rangle_{i<\o}$,
where $(a_i)_{i<\o}$ is an independent (with respect to the pregeometry given in the statement of Theorem \ref{quasiurb}) sequence of realisations of the type $p$.
Denote $A=(a_i)_{i<\o}$. 
Now, $A$ is a pregeometry basis for $p(\A^*)$. 
Let $q \in S(\A^*)$ be the unique non-algebraic extension of $p$ to $\A^*$.
For each model $\B \in \K^*_{>\l^+}$ such that $\A^* \subseteq \B$,
we can define a pregeometry on $X^*=q(\B)$ 
similarly as we have done for $p(\B)$:
for a set $Y \subseteq X^*$, we let $cl(Y)=\langle \A^*, Y \rangle \cap X^*$.
Note that this closure operator is obtained by localising the pregeometry of $p(\B)$ 
at $p(\B) \cap \A^*$.

In the statement of the next theorem, we assume that $\B \in \K^*_{>\l^+}$ and $\A^* \subseteq \B$,
but again, each model in $\K^*_{>\l^+}$ 
contains a submodel isomorphic to $\A^*$,
and hence the theorem can be seen hold for all $\B \in \K^*_{>\l^+}$.

By the notation $b \in dcl(B)$ we mean that there is a quantifier free first order formula $\phi(x,y)$
and some $a \in B$ such that $b$ is the unique element satisfying $\phi(x,a)$.
If $a \in dcl(B,b)$ and $b \in dcl(B,a)$, we say that $a$ and $b$ are \emph{interdefinable} over $B$.

\begin{theorem}\label{koordinaatit}
Let $\B \in \K^*_{>\l^+}$, suppose $\A^* \subseteq \B$, and let $b \in \B$.
Setting $X^*=\{a \in \B \, | \, \t(a/\A^*)=q\}$, there are elements
$a_1, \ldots, a_n \in X^*$ for some $n<\o$, such that $b$ is interdefinable with the tuple $(a_1, \ldots, a_n)$ over $\A^*$.
\end{theorem}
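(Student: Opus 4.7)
My plan is to represent $b$ in a minimal way using generators from $X^*$ over $\A^*$ and then to argue that this minimal representation is definable from $b$.

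First, I would observe that the machinery of Section~\ref{cat} applies verbatim with $\A^*$ in place of $\A$ and the type $q$ in place of $p$: since $q$ is by construction the unique non-algebraic extension of $p$ to $\A^*$, and since $q$ extends $p$, the non-splitting argument of Lemma~\ref{dnsplit} establishes that $q$ is itself minimal over $\A^*$ in the sense of Definition~\ref{minimal}. In particular, the analogue of Lemma~\ref{generated} yields $\B = \langle \A^*, (c_j)_{j<\mu} \rangle$ where $(c_j)_{j<\mu}$ is a maximal independent sequence of realisations of $q$ in $\B$, and hence each $c_j \in X^*$. Because $\B$ is a model in a universal class, there exist an $L$-term $t$, a tuple $\bar a$ from $\A^*$, and elements $c_1, \ldots, c_n \in X^*$ with $b = t(\bar a, c_1, \ldots, c_n)$; I would pick such a representation with $n$ minimal.

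One direction of interdefinability is immediate: $b \in dcl(\A^*, c_1, \ldots, c_n)$ because the quantifier-free formula $x = t(\bar a, y_1, \ldots, y_n)$ witnesses it. For the converse, I would assume toward a contradiction that some $c_i$, say $c_1$, is not in $dcl(\A^*, b)$. By the saturation of $\B$ and the amalgamation property given by Lemma~\ref{apsat}, there is some $c_1' \neq c_1$ satisfying the same quantifier-free type as $c_1$ over $\A^* \cup \{b\}$, and a quantifier-free automorphism $f$ of a sufficiently saturated model extending $\B$ that fixes $\A^* \cup \{b\}$ pointwise and sends $c_1$ to $c_1'$. Because $f$ fixes $\A^*$ it maps realisations of $q$ to realisations of $q$, so $f(c_i) \in X^*$ for each $i$, and applying $f$ to the defining equation yields $b = t(\bar a, c_1', f(c_2), \ldots, f(c_n))$, producing a second $n$-tuple representation of $b$.

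The remainder of the argument would invoke Theorem~\ref{quasiurb}: on $X^*$, the pregeometry localised at $\A^*$ is either trivial or induced from a vector space. Combining this dichotomy with the minimality of $n$, I would show that the fibre $\{(d_1, \ldots, d_n) \in (X^*)^n \, \vert \, t(\bar a, d_1, \ldots, d_n) = b\}$ is in fact a singleton. In the vector-space case, an exchange argument allows me to assume that $c_1, \ldots, c_n$ are linearly independent over $\A^*$, and linear independence rules out two distinct tuples lying in the fibre; in the trivial case, any genuine branching in the fibre would allow a representation of $b$ using strictly fewer than $n$ coordinates, contradicting the minimality of $n$. Either way this contradicts $c_1 \neq c_1'$, so each $c_i$ is indeed in $dcl(\A^*, b)$, and the tuple $(c_1, \ldots, c_n)$ is interdefinable with $b$ over $\A^*$.

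The main obstacle is the last step: upgrading algebraicity — a finite fibre, which follows relatively straightforwardly from minimality of $n$ together with stability — to definability, a singleton fibre. The essential input is the quasi-Urbanik structure on $X^*$ used in Theorem~\ref{quasiurb} via Theorem~B of \cite{zilber}, which tightly constrains how automorphisms of $\B$ that fix $\A^*$ can act on independent tuples in $X^*$; exploiting this to rule out non-trivial stabilisers of a minimal representation is where the technical work of the proof concentrates.
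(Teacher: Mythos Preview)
The first part of your plan --- writing $b$ as a term in $\A^*$ and finitely many elements of $X^*$, and noting this gives $b\in dcl(\A^*,c_1,\ldots,c_n)$ --- is fine and matches the paper. The gap is in the reverse direction. Your claim that in the vector-space case ``linear independence rules out two distinct tuples lying in the fibre'' is not justified: the automorphism $f$ you build sends the independent tuple $(c_1,\ldots,c_n)$ to another independent tuple $(c_1',f(c_2),\ldots,f(c_n))$ in the same fibre, and minimality of $n$ does not by itself forbid this. Likewise, in the trivial case you give no mechanism by which a second tuple in the fibre yields a representation with fewer coordinates. You correctly flag this step as the main obstacle, but the proposal does not surmount it; the case split on the Zilber dichotomy is more specific than what is needed and does not obviously help.

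The paper avoids this difficulty by working with a canonical object rather than a particular term. From Theorem~\ref{quasiurb} the pregeometry on $p(\B)$ is locally modular, so its localisation $X^*$ is modular; modularity is exactly what is used to show that if $b\in\langle\A^*,Y\rangle\cap\langle\A^*,Z\rangle$ for closed $Y,Z\subseteq X^*$ then $b\in\langle\A^*,Y\cap Z\rangle$. Hence there is a unique smallest closed set $X_b\subseteq X^*$ with $b\in\langle\A^*,X_b\rangle$, and one takes $a_1,\ldots,a_n$ to be a basis of $X_b$. Now any automorphism of $\B$ fixing $\A^*b$ must fix $X_b$ setwise by uniqueness, so every conjugate of $a_i$ over $\langle\A^*,b\rangle$ lies in $X_b$, a set of size $\l^+$. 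If $a_i\notin\langle\A^*,b\rangle$, Lemma~\ref{tyyppilaajenee} and saturation (Lemma~\ref{apsat}) manufacture $\l^{++}$ distinct conjugates inside $\B$, a contradiction. So the decisive input is modularity plus a cardinality count, not a fibre-by-fibre analysis of a fixed term.
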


\begin{proof} 

We prove first an auxiliary claim.
Recall that $A$ was chosen to be a basis for $\A^*$
in $p(\B)$.


\begin{claim}\label{uniquesmallest}
If there are some closed sets $Y, Z \subseteq X^*$ such that $b \in \langle \A^*, Y \rangle \cap \langle \A^*, Z \rangle$,
then $b \in \langle \A^*, Y \cap Z \rangle$.
\end{claim}

\begin{proof} 
By Theorem \ref{quasiurb},
$p(\B)$ is locally modular,
and thus $q(\B)=X^*$ is modular,
so $Y$ and $Z$ are independent over $Y \cap Z$,
and we can choose bases $Y_0$ and $Z_0$ for $Y$ and $Z$, respectively,
so that $Y_0 \cup Z_0$ is a basis for $Y \cup Z$
(take a basis for $Y \cap Z$, enlarge it first to a basis for $Y$ and then to a basis for $Y \cup Z$).
Then, $A  \cup Y_0 \cup Z_0$ is an independent sequence in $p(\B)$. 

  
Now, there are some $a \in \A$ and some $a' \in A$ such that
$$b \in \langle a, a', Y_0 \rangle \cap \langle a, a', Z_0 \rangle.$$
Since set $A \cup Y_0 \cup Z_0$ is independent, there is an automorphism $f$ of $\B$ fixing the set $\A a'Z_0$ pointwise, such that $f(Y_0 \setminus Z_0) \subseteq A$.
Since $b \in \langle \A, a', Z_0 \rangle$, we have $f(b)=b$.
On the other hand, we get $f(b) \in dcl (\A, a', f(Y_0)) \subseteq dcl(\A^*, Y_0 \cap Z_0)$. 
\end{proof}

By Claim \ref{uniquesmallest}, there is a unique smallest closed set $X_b \subseteq X^*$ such that $b \in \langle \A^*, X_b \rangle$.
Let $\{a_1, \ldots, a_n \}$ be a basis for $X_b$.
We claim that it is as wanted.
Clearly $b \in dcl(\A^*, a_1, \ldots, a_n)$.
We will show that $a_i \in \langle \A^* b \rangle$, and the claim will follow.

Suppose $1 \le i \le n$ and $a_i \notin \langle \A^* b \rangle.$
Let $f$ be an automorphism of $\B$ that fixes the set $\A^*b$ pointwise.
Since $X_b$ is unique, $f$ fixes it as a set, and thus $f(a_i) \in X_b$ for $1 \le i \le n$.
Because $\v X_b \v=\l^+$, the type $\t(a_i/ \langle \A^* b \rangle)$ has at most $\l^+$ many realisations in $\B$.
We will derive a contradiction by constructing a sequence of $\l^{++}$ many distinct realisations.
Denote $\C_0=\B$, and choose models $\C_j$ and elements $c_j \in \B$ for $j<\l^{++}$ as follows.
If $j$ is a limit ordinal, take $\C_j=\bigcup_{k<j} \C_k$.
By Lemma \ref{tyyppilaajenee}, there is some $\C_{j+1} \in \K^*$ and $c \in \C_{j+1} \setminus \C_j$
such that $\C_j \subseteq \C_{j+1}$ and $\t(c/\langle \A^*, b \rangle) =\t(a_i/\langle \A^*, b \rangle)$.
Using Lemma \ref{apsat}, we find an element $c_j \in \B$ such that $\t(c_j / \langle \A^*, b, c_k \rangle_{k<j})=\t(c/\langle \A^*, b, c_k \rangle_{k<j})$.
 
 \end{proof}
 
 We now apply the above theorem to show that $\K^*$ is categorical in $\l^+$
 and thus totally categorical.
 
 \begin{theorem}\label{catbonus}
 Let $\K$ be a universal class with arbitrarily large models and let $\k$ be a regular cardinal such that $\K$ 
is categorical in $\k$.
Suppose $\l=LS(\K)$ and $\k>\l^+$. 
Let $\K^*$ be the class consisting of all the models $\B \in \K$ such that $\vert \B \vert > \l$ 
and $\B$ can be embedded in some $\C \in \K_{\ge \k}$.
Then, $\K^*$ is totally categorical.
 \end{theorem}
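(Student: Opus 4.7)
The plan is to first reduce the theorem to establishing categoricity in the single cardinal $\l^+$. By Theorem \ref{categoricity}, $\K^*$ is already $\mu$-categorical for every $\mu>\l^+$, and by Lemma \ref{aec} the L\"owenheim--Skolem number of $\K^*$ is $\l^+$, so the only additional content of total categoricity is categoricity in $\l^+$ itself.

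Fix then $\B,\C\in\K^*_{\l^+}$. My plan is to construct an isomorphism $f:\B\to\C$ by a back-and-forth of length $\l^+$, using the amalgamation property of $\K^*$ (Lemma \ref{apsat}) to witness consistency of the types I need to realise and the weak saturation of $\B$ and $\C$ (Lemma \ref{weaksat2}) to actually realise them. I would enumerate $\B=\{b_\alpha:\alpha<\l^+\}$ and $\C=\{c_\alpha:\alpha<\l^+\}$ and build a continuous increasing chain of partial isomorphisms $f_\alpha:B_\alpha\to C_\alpha$ between substructures of size at most $\l$, arranging on even (resp.\ odd) stages that $b_\alpha\in B_{\alpha+1}$ (resp.\ $c_\alpha\in C_{\alpha+1}$).

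The heart of each successor step would go as follows. Given some $b\in\B\setminus B_\alpha$, apply Lemma \ref{apsat} to amalgamate $\B$ and $\C$ along $f_\alpha$, obtaining $D\in\K^*$ that, after identification, contains $\C$ together with an image of $b$ realising the quantifier free type $f_\alpha(\t(b/B_\alpha))$ over $C_\alpha$. Since $\v C_\alpha\v\le\l<\l^+=\v\C\v$ and $\C\subseteq D\in\K^*$, the weak saturation of $\C$ from Lemma \ref{weaksat2} produces $c\in\C$ realising this very type. Setting $f_{\alpha+1}=f_\alpha\cup\{(b,c)\}$ and closing under $L$-terms yields an isomorphism between the generated substructures, since qf-types determine atomic diagrams in the universal setting. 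The backward step at odd stages is symmetric.

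The main obstacle I foresee is the subtle point that full saturation from Corollary \ref{saturated} is only available for models of size \emph{strictly} greater than $\l^+$, whereas here $\B$ and $\C$ have size exactly $\l^+$; one must rely on weak saturation instead. This is precisely why the back-and-forth must be organised so that the partial domains $B_\alpha,C_\alpha$ remain of size at most $\l$ throughout: then the types to be realised live over sets strictly smaller than $\l^+$, which is exactly the regime covered by Lemma \ref{weaksat2}. After $\l^+$ stages the union $f=\bigcup_{\alpha<\l^+}f_\alpha$ is a bijection $\B\to\C$ preserving all qf-formulas, hence an isomorphism of the structures in $\K^*$, completing the proof of total categoricity.
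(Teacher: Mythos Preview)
Your reduction to the single cardinal $\l^{+}$ is correct, but the back-and-forth has a genuine gap at the amalgamation step. Lemma~\ref{apsat} (and Lemma~\ref{AP} on which it rests) gives AP only over a base model that lies in $\K^{*}$, i.e.\ over a model of size $>\l$. In your successor step you want to amalgamate $\B$ and $\C$ along $f_\alpha$, whose domain $B_\alpha$ has cardinality at most $\l$; thus $\langle B_\alpha\rangle\notin\K^{*}$ and Lemma~\ref{apsat} does not apply. Without that amalgamation you have no witness that the pushed-forward type $f_\alpha(\t(b/B_\alpha))$ is realised in \emph{any} $\K^{*}$-extension of $\C$, so the hypothesis of weak saturation (Definition~\ref{satdef}: the type must come from some $\B'\supseteq\C$ in $\K^{*}$) is never verified and Lemma~\ref{weaksat2} cannot be invoked. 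Note also that the ``every model is saturated'' clause of Lemma~\ref{apsat} is vacuous at size $\l^{+}$: by Definition~\ref{satdef} the test submodel must lie in $\K^{*}$ and have size $<\l^{+}$, which is impossible since $LS(\K^{*})=\l^{+}$. So there is no way to upgrade weak saturation to anything stronger here.

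This obstacle is exactly why the paper does \emph{not} argue by back-and-forth at $\l^{+}$. Its proof uses the structural coordinatisation Theorem~\ref{koordinaatit}: one fixes a saturated $\B\in\K^{*}_{\l^{++}}$ containing $\A^{*}$ and a basis $(a_i)_{i<\l^{++}}$ of $q(\B)$, sets $\A^{**}=\langle\A^{*},a_i\rangle_{i<\l^{+}}$, and then shows that every $\C\in\K^{*}_{\l^{+}}$ is isomorphic to $\A^{**}$. After arranging $\A^{**}\subseteq\C\subseteq\B$ (via Lemma~\ref{universal} and Corollary~\ref{saturated2}), one takes a basis $(b_i)_{i<\a}$ of $q(\C)$ and proves $\C=\langle\A^{*},b_i\rangle_{i<\a}$: for any $a\in\C$, Theorem~\ref{koordinaatit} yields $c_1,\ldots,c_n\in q(\B)$ interdefinable with $a$ over $\A^{*}$, and since $c_k\in\langle\A^{*},a\rangle\subseteq\C$ these $c_k$ already lie in $q(\C)$, whence $a\in\langle\A^{*},b_i\rangle_{i<\a}$. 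The coordinatisation is precisely the extra input that substitutes for the unavailable small-base amalgamation.
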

 
 \begin{proof}
 By Theorem \ref{categoricity}, the class $\K^*$ is categorical in every $\mu>\l^+$.
 We show it is categorical also in $\l^+$.
 
Let $\B \in K^{*}_{\l^{++}}$ be such that $\A^{*}\subseteq\B$,
and let $(a_{i})_{i<\l^{++}}$ be a basis of $q(\B )$.
Let $\A^{**}=\langle \A^{*},a_{i} \rangle_{i<\l^{+}}$, and suppose
$\C\in K^{*}_{\l^{+}}$. 
It is enough to prove that
$\C\cong\A^{**}$.

By Lemma \ref{universal}, $\C$ is universal, so we may assume 
$\A^{**}\subseteq\C$, and since $\B$ is saturated by Corollary \ref{saturated2}, we may assume
 $\C\subseteq\B$. 
Let $(b_{i})_{i<\a}$ be a basis of  $q(\C)$ such that $b_{i}=a_{i}$ for
$i<\l^{+}$, and extend it to a basis
$(b_{i})_{i<\l^{++}}$ of $q(\B )$.
Since $(b_{i})_{i<\l^{+}}$ and $(b_{i})_{i<\a}$ are
quantifier free indiscernible sequences over $\A^{*}$ of the same cardinality
and the former  is a subset of the latter,
it is enough to prove that $\C=\langle \A^{*},b_{i} \rangle_{i<\a}$.

For this, let $a\in\C$. 
By Theorem \ref{koordinaatit},
there are
$c_{1}, \ldots ,c_{n}\in q(\B )$ such that
$a\in \langle \A^{*},c_{1}, \ldots,c_{n} \rangle$ and
for all $1\le i\le n$, $c_{i}\in \langle \A^{*},a \rangle$.
Since $\langle \A^{*},a \rangle \subseteq\C$,  we have $c_{1}, \ldots ,c_{n}\in q(\C )$
and thus $a\in \langle \A^{*},b_{i} \rangle_{i<\a}$. 
\end{proof}
 
We finish this paper with a remark on the definition of universal classes.
As Kirby pointed out to us, the definition is very syntactic: if we replace the functions by their graphs in the models of a universal class,
then the class is not in general universal any more.
This raises the question whether our results can be generalised to less syntactic cases.
The answer is, of course, positive, and we now describe one easy case.
 
Suppose $(\K',\subseteq )$ is an AEC with arbitrary large models,
$\l=LS(\K')$, and $\K'$ is categorical in some regular cardinal $\k >\l^{+}$.
Moreover, we require that the class is closed under definably closed submodels,
and for this, we need to redefine the notion of definably closed to make it work in a setup that does not assume AP
(note that the following definition coincides with the previous one when we are working over the model $\A^*$ defined as above). 
 
\begin{definition}
If $\A \in \K'$, $b \in \A$ is a singleton and $a \in \A^n$, 
we say that
$b$ is \emph{definable} over $a$ (in $\A$) if there is a quantifier free formula
$\phi (x,y)$ such that 
\begin{itemize}
\item $\phi (\A ,a)=\{ b\}$;
\item if $\C \in \K'$ and $a' \in \C^n$ is such that
$t_{qf}(a'/\emptyset )=t_{qf}(a/\emptyset )$,
then $\phi (\C ,a')$ is a singleton.
\end{itemize}
\end{definition}

We formulate our requirement that $\K'$ be closed under definably closed submodels by assuming that if
$\B \in \K'$, $\A \subseteq \B$, and the condition (*) below holds, then $\A \in \K'$.

\vspace{0.2cm} 
(*) If $b\in\B$ is definable over some $a\in\A^{n}$,
then $b\in\A$.
\vspace{0.2cm}

To show that our results hold for $\K'$,
we define another class $\K$ as follows.  
For each quantifier free formula $\phi =\phi (x,y)$, 
we add a function symbol $f_{\phi}$
to the vocabulary of $\K'$.
If  $\A\in \K'$, we define a model $\A^{*}$ by adding
the following interpretations for $f_{\phi}$:
if $a=(a_{1},...,a_{n})\in\A^{n}$, the singleton $b$ is definable over $a$
and $\phi$ witnesses this, then
$f_{\phi}(a)=b$ and otherwise $f_{\phi}(a)=a_{1}$.
Let the class $\K$ consist of the models $\A^*$
for all $\A \in \K'$.
Now it is easy to see that
$(\K,\subseteq )$ is a universal class, $LS(\K)=\l$ and
$\K$ is categorical in $\k$. 
Moreover, the map 
$\A\mapsto\A^{*}$
is a bijection from $\K'$ to $\K$ and
if $\A, \B \in \K'$, then
$\A^{*}\subseteq\B^{*}$ if and only if $\A \subseteq \B$
and $\A^{*}\cong\B^{*}$ if and only if $\A\cong\B$.
Since the assumptions we gave at the beginning of this paper hold for $\K$,
our results apply there,
and they can be straightforwardly transferred to $\K'$.

\end{document}